% !TEX encoding = UTF-8 Unicode
  \documentclass[12pt]{amsart}
  \setcounter{page}{1}
   %temporary numbers
  %%replace NN with the pagelength of the paper%%
  \overfullrule=5pt

\address{Humboldt Universit\"at zu Berlin, Institut f\"ur Mathematik. Rudower Chaussee 25, 12489 Berlin, Germany.}

  \email{vasirog[at]gmail.com}
  %%optional:  \curaddr{current address}%%
   %%optional:  \urladdr{website address}%%

\usepackage[utf8]{inputenc}
\usepackage[T2A]{fontenc}
\usepackage{enumerate, amssymb, amsmath, amsthm}
\usepackage{amscd}
\usepackage{graphicx}
\usepackage{hyperref}
\hypersetup{
    colorlinks=true,
    linkcolor=blue,
    filecolor=blue,      
    urlcolor=blue,
    citecolor=magenta
}
 
\urlstyle{same}
\usepackage[a4paper,hmargin=2.5cm,vmargin=2.5cm]{geometry}
\usepackage{marginnote}
\usepackage[english]{babel}
\graphicspath{ {images/} }
\usepackage{wrapfig}
\usepackage[matrix,arrow,curve]{xy}\usepackage{mathabx}
\usepackage[OT2, T1]{fontenc}

\marginparwidth=50pt

\DeclareSymbolFont{cyrletters}{OT2}{wncyr}{m}{n}
\DeclareMathSymbol{\Sha}{\mathalpha}{cyrletters}{"58}

\usepackage[a4paper,hmargin=2.5cm,vmargin=2.5cm]{geometry}
\usepackage[english]{babel}
\graphicspath{ {images/} }
\usepackage{wrapfig}
\usepackage[matrix,arrow,curve]{xy}\usepackage{mathabx}

\begin{document}

\newtheorem{prop}{Proposition}[section]
\newtheorem{thrm}[prop]{Theorem}
\newtheorem{lemma}[prop]{Lemma}
\newtheorem{cor}[prop]{Corollary}
\newtheorem{mainthm}{Theorem}
\newtheorem{maincor}[mainthm]{Corollary}
\theoremstyle{definition}
\newtheorem{df}{Definition}
\newtheorem{ex}{Example}
\newtheorem{rmk}{Remark}
\newtheorem{conj}{Conjecture}
\newtheorem{cl}{Claim}
\newtheorem{q}{Question}
\renewcommand{\proofname}{\textnormal{\textbf{Proof:  }}}
\renewcommand{\refname}{Bibliography}
\renewcommand{\themainthm}{\Alph{mainthm}}
\renewcommand{\themaincor}{\Alph{maincor}}
\renewcommand{\C}{\mathbb C}
\newcommand{\Z}{\mathbb Z}
\newcommand{\Q}{\mathbb Q}
\newcommand{\R}{\mathbb R}
\newcommand{\N}{\mathbb N}

\renewcommand{\O}{\mathcal O}
\newcommand{\g}{\mathfrak g}
\newcommand{\D}{\mathcal D}

\renewcommand{\i}{\sqrt{-1}}
\renewcommand{\o}{\otimes}
\newcommand{\di}{\partial}
\newcommand{\acts}{\lefttorightarrow}
\newcommand{\dibar}{\overline{\partial}}
\newcommand{\im}{\operatorname{im}}
\renewcommand{\ker}{\operatorname{ker}}
\newcommand{\coker}{\operatorname{coker}}
\newcommand{\Hom}{\operatorname{Hom}}
\newcommand{\tr}{\operatorname{tr}}
\newcommand{\Alb}{\operatorname{Alb}}
\newcommand{\Hilb}{\operatorname{Hilb}}
\newcommand{\CP}{\mathbb{C}\mathbf{P}}
\newcommand{\Isom}{\operatorname{Isom}}
\newcommand{\Sym}{\operatorname{Sym}}

\newcommand{\GL}{\operatorname{GL}}
\newcommand{\SL}{\operatorname{SL}}
\newcommand{\SU}{\operatorname{SU}}
\renewcommand{\U}{\operatorname{U}}
\newcommand{\SO}{\operatorname{SO}}
\newcommand{\Ogr}{\operatorname{O}}
\newcommand{\Sp}{\operatorname{Sp}}

\newcommand{\codim}{\operatorname{codim}}
\newcommand{\rk}{\operatorname{rk}}
\newcommand{\hdot}{{\:\raisebox{3pt}{\text{\circle*{1.5}}}}}
\newcommand{\alg}{\mathrm{alg}}
\newcommand{\an}{\mathrm{an}}
\newcommand{\tM}{\widetilde{M}}
\newcommand{\tg}{\widetilde{g}}
\renewcommand{\S}{\mathcal S}
\binoppenalty = 10000
\relpenalty = 10000

\title{On the metric Koll\'ar-Pardon problem}

\author{Vasily Rogov}

\begin{abstract}
Let $(M, g)$ be a compact real analytic Riemannian manifold and $\pi \colon \tM \to M$ its universal cover. Assume that $\tM$ can be realised as a manifold definable in an o-minimal structure $\Sigma$ expanding $\R_{\an}$ in such a way that the pullback metric $\widetilde{g}:=\pi^*g$ is $\Sigma$-definable. For instance, this is the case when $\tM$ can be realised as a  semi-algebraic submanifold in $\R^n$ in such a way that the coefficients of the metric $\widetilde{g}$ are semi-algebraic.  We show that there exists a definable smooth map $\tM \to \widetilde{K}$ to a compact simply connected $\Sigma$-definable space $\widetilde{K}$ such that its regular fibres are Riemann locally homogeneous with respect to the metric $\widetilde{g}$. We deduce that under these assumptions $\pi_1(M)$ is quasi-isometric to a locally homogeneous space. In the case when $M$ is aspherical we show that $(\tM, \widetilde{g})$ is a homogeneous  Riemannian manifold. A similar result in the setting of complex algebraic geometry was earlier conjectured by Koll\'ar and Pardon (\cite{KP}). Using our results, we prove the conjecture of Koll\'ar-Pardon in the special case of smooth aspherical varieties admitting a bi-definable K\"ahler metric and discuss the analogues of this conjecture in other branches of geometry.
\end{abstract}

\maketitle

\section{Introduction}\label{intro}

Let $\Gamma$ be a finitely generated group that acts by homeomorphisms properly discontinuously and cocompactly on an $n$-dimensional ball $B$. Choose any homeomorphism $\phi \colon B \xrightarrow{\sim} \R^n$. Let  $f \colon \R^n \to \R$ be a polynomial map. Assume that $f$ is $\Gamma$-invariant, that is $f(\phi(\gamma \cdot x))=f(\phi(x))$ for any $x \in B, \ \gamma \in \Gamma$. Then $f$ is constant. 

The aim of this note is to take a step towards a wide geometrical generalisation of this elementary fact. The geometric meta-principle we are aiming for can be formulated as the following:
\\

<<If a geometric structure demonstrates \emph{tame behaviour at infinity} and is invariant under a cocompact discrete group action, it is \emph{close to being homogenous}>>.
\\ 

 The notion of tameness can be made precise using the language of o-minimal geometry, see Section \ref{prelim}. The words <<\emph{close to being homogenous}>> should most probably mean <<homogenous up to a compact factor>>, see Theorem \ref{mainA} and the discussion there.

The first result in this direction was proposed by Koll\'ar and Pardon in their paper \cite{KP} and was motivated by the study of universal covers of complex algebraic varieties. Let $X$ be a smooth complex projective variety and $\pi \colon \widetilde{X} \to X$ its universal cover\footnote{By this we mean the universal cover of the underlying complex analytic space $X^{\an}$.}. The Shafarevich conjecture (\cite{Shaf}, p.407) predicts that $\widetilde{X}$ is holomorphically convex, which is a deep complex-analytic property. It is now known in many cases (see e.g. \cite{EKPR}), while the general case is still widely open. One of the difficulties of this conjecture is that the operation of taking the universal cover is purely transcendental: the complex manifold $\widetilde{X}$ usually does not possess an algebraic structure and lacks any reasonable description in terms of polynomial equations, or even real polynomial inequalities.  The conjecture of Koll\'ar and Pardon says that $\widetilde{X}$ is indeed never semi-algebraic, except for several classical cases.

\begin{conj}[Koll\'ar-Pardon, \cite{KP}]\label{KP conj}
Let $X$ be a smooth complex projective variety and $\pi \colon \widetilde{X} \to X$  its universal cover. Assume that $\widetilde{X}$ is biholomorphic to an open real semi-algebraic subset $\Omega$ inside a complex algebraic variety $Z$. Then $\widetilde{X} \simeq F \times \C^n \times D$, where $D$ is a symmetric bounded domain and $F$ is a  simply connected projective variety.
\end{conj}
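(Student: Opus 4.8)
The plan is to route the conjecture through the metric theorem of this paper (Theorem~\ref{mainA}): I would first equip $X$ with a polarised K\"ahler metric whose pullback to $\widetilde X\simeq\Omega$ is definable, then read off the homogeneous geometry and split $\widetilde X$ into factors by the de~Rham decomposition. Fix an ample line bundle and the associated Hodge metric $g$ on $X$ (e.g.\ a Fubini--Study metric for a projective embedding); then $(X,g)$ is a compact real analytic Riemannian manifold and the K\"ahler form of $\widetilde g=\pi^*g$ represents the pullback of an integral class. The crux of the whole argument---and the first genuine obstacle---is to realise $\widetilde g$ as a $\Sigma$-definable metric on $\Omega$ for some o-minimal $\Sigma$ expanding $\R_{\an}$. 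This is delicate: $\widetilde g$ is invariant under the infinite deck group $\Gamma=\pi_1(X)$, and an infinitely periodic object is typically incompatible with o-minimality. What saves the classical examples is that there the deck transformations are themselves tame---affine translations on $\C^n$, algebraic automorphisms of a bounded symmetric domain---so that the invariant metric acquires definable (semi-algebraic, resp.\ $\R_{\an,\exp}$-definable) coefficients. I would therefore try to show that the semi-algebraicity of $\Omega$ together with proper discontinuity and cocompactness forces the induced $\Gamma$-action to be compatible with an o-minimal structure, so that a $\Gamma$-invariant definable K\"ahler potential, and hence a definable metric, exists; establishing this \emph{without} postulating a bi-definable metric (as the special case proved in this paper does) is precisely where the difficulty of the conjecture is concentrated.

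Granting a $\Sigma$-definable $\widetilde g$, Theorem~\ref{mainA} applies: it produces a definable map $\widetilde X\to\widetilde K$ onto a compact simply connected definable space with Riemann locally homogeneous regular fibres, and, more to the point, it puts the locally homogeneous geometry at my disposal. Since $\widetilde g$ is the pullback of a metric on the compact manifold $X$, it is complete, and $\widetilde X$ is simply connected and K\"ahler; I would invoke the de~Rham decomposition, which in the K\"ahler category is by parallel \emph{complex-analytic} factors, to write $(\widetilde X,\widetilde g)$ as a holomorphic and isometric product $\widetilde X_0\times\widetilde X_1\times\cdots\times\widetilde X_r$ of a flat factor and irreducible factors. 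Because each factor is complete, simply connected and locally homogeneous---the last property extracted from Theorem~\ref{mainA}---local homogeneity globalises, so each factor is a homogeneous K\"ahler manifold.

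It then remains to identify the three types of factors. The flat factor is a complete simply connected flat K\"ahler manifold, hence $\C^n$. Each compact factor is a compact simply connected homogeneous K\"ahler manifold carrying the restriction of the integral class above; by the Kodaira embedding it is projective, and the product of the compact factors is the required simply connected projective $F$ (it is here that I use a polarised, rather than arbitrary, K\"ahler metric). Each remaining factor is a non-compact, non-flat homogeneous K\"ahler manifold, so by the Borel--Remmert structure theory it is a bounded homogeneous domain. To upgrade this to a \emph{symmetric} bounded domain $D$ I would use rigidity: the cocompact $\Gamma$-action descends to a discrete cocompact group of holomorphic automorphisms of the domain, and a bounded homogeneous domain admitting such a lattice must be symmetric, its automorphism group being forced to be semisimple. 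Assembling the pieces yields $\widetilde X\simeq F\times\C^n\times D$.

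The step I expect to be the decisive obstacle is the construction of the definable invariant metric in the first paragraph; by contrast, once Theorem~\ref{mainA} is in force the splitting is comparatively formal, the only other substantive input being the rigidity statement that promotes homogeneous domains to symmetric ones. I note finally that the asphericity hypothesis under which this paper proves the conjecture corresponds exactly to the vanishing of the compact factor ($F$ a point, since an aspherical $\widetilde X$ is contractible): recovering a nontrivial $F$ is what the compact space $\widetilde K$ of Theorem~\ref{mainA} and the compact de~Rham factors are meant to supply in the general case.
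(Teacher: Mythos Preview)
The statement you are attempting is labelled in the paper as a \emph{conjecture}, and the paper does not prove it. What the paper does prove is the special case recorded as Corollary~\ref{KP special}, under the two additional hypotheses that $X$ is aspherical and that $X$ carries a bi-definable K\"ahler metric. So there is no full proof in the paper against which to compare your proposal; your write-up is better read as an outline of attack on the open problem.

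You yourself isolate the first and principal gap: manufacturing a $\Sigma$-definable $\Gamma$-invariant K\"ahler metric on $\Omega$ from the bare semi-algebraicity of $\Omega$. The paper does not know how to do this either---it simply \emph{assumes} a bi-definable metric in Corollary~\ref{KP special}. Your heuristic that cocompactness and proper discontinuity should force the deck action to be tame is not an argument; it is the heart of the open conjecture.

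There is a second, unflagged gap. Even granting a definable $\widetilde g$, Theorem~\ref{mainA} does \emph{not} say that $(\widetilde X,\widetilde g)$ is locally homogeneous; it produces a map $\Phi\colon\widetilde X\to\widetilde K$ to a compact simply connected definable space and asserts local homogeneity only of the \emph{regular fibres} of $\Phi$. Your claim that ``each factor is \ldots locally homogeneous---the last property extracted from Theorem~\ref{mainA}'' tacitly identifies the de~Rham factors with these fibres, and the two decompositions have no a~priori relation. In the paper, local (hence global) homogeneity of $\widetilde X$ itself is obtained only under asphericity, via Lemma~\ref{podkorytov}, which forces the Console--Olmos map to be constant and $\widetilde K$ to be a point; without asphericity $\widetilde K$ may be positive-dimensional and $\Phi$ may have critical fibres (Examples~\ref{revolution} and~\ref{revolution2}), so nothing about homogeneity of $\widetilde X$ or of its de~Rham factors follows. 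Passing from the fibration $\Phi$ to a product $F\times\C^n\times D$ with a nontrivial compact factor $F$ is precisely the content of the conjecture that the methods of this paper do not reach.
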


Recall that a subset $\Omega$ of a complex algebraic variety $Z$ is real semi-algebraic if $Z$ admits a finite open cover by affine sets $U_i$ such that $\Omega \cap U_i$ is given by a finite collection of real polynomial equalities and inequalities for every $i$.

In the same paper, the authors proved their conjecture in some special case (Theorem 3, \cite{KP}). In the earlier work \cite{CHK} some results were obtained when $\Omega=Z$ is quasi-projective. See also \cite{CH} for some results when $X$ is compact K\"ahler and $\Omega$ is quasi-K\"ahler.

Let us clarify the relation between Conjecture \ref{KP conj} and the meta-principle that we mentioned above. The complex structure on $\Omega$ is \emph{semi-algebraic}. This means that there exists a finite collection of real semi-algebraic charts $\Omega = U_1 \cup \ldots \cup U_k$ and real algebraic trivialisations of the tangent bundle $T\Omega|_{U_i}$, in which the complex structure operator $J_{\Omega}$ is given by semi-algebraic maps $\phi_i \colon U_i \to \operatorname{Mat}_{n \times n}(\R)$, where $n=\dim X$. On the other hand, the holomorphic action of $\pi_1(X)$ on $\widetilde{X}$ translates naturally to $\Omega$ and $J_{\Omega}$ is $\pi_1(X)$-invariant. Conjecture \ref{KP conj} therefore predicts that up to a compact factor $F$ (which is, in fact, irrelevant to the $\pi_1(X)$-structure on $\widetilde{X}$), the complex manifold $\widetilde{X}$  is biholomorphic to the complex homogeneous manifold $\C^{n} \times D$. Moreover, $\widetilde{X}$ is K\"ahler since it is locally biholomorphic to the K\"ahler manifold $X$, and every complete K\"ahler homogenous simply connected manifold is of this form (see e.g. \cite{Ghys}, Th\'eor\`eme 2.5).

A set-up for Koll\'ar-Pardon conjecture which is more natural and general than semi-algebraic geometry is \emph{o-minimal geometry} (see \cite{KP}, Remark 18). We briefly recall the basics of o-minimal geometry in Section \ref{prelim} and refer the reader to \cite{vdD} for the comprehensive introduction. The main result of this paper is the following Riemannian version of Conjecture \ref{KP conj}:

\begin{mainthm}\label{mainA}[Theorem \ref{thmA}]
Let $M$ be a compact real analytic manifold and $g$ a real-analytic Riemannian metric on $M$. Let $\pi \colon \tM \to M$ be the universal cover and $\widetilde{g}:=\pi^*g$  the induced metric on it. Assume that $\tM$ admits a structure of a \emph{manifold definable in an o-minimal structure} $\Sigma$ in such a way that $\widetilde{g}$ is $\Sigma$-definable. Then there exists a $\Sigma$-definable simply connected compact topological space $\widetilde{K}$ and a $\Sigma$-definable real analytic map $\Phi \colon  \tM \to \widetilde{K}$, such that the regular fibres of $\Phi$ are Riemannian locally homogeneous manifolds with respect to the metric $\widetilde{g}$.
\end{mainthm}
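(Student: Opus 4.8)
The plan is to take for $\Phi$ a $\Sigma$-definable factorisation of the curvature--jet map of $(\tM,\tg)$ through the space of connected components of its fibres, and to recognise its generic fibres as the orbits of the local isometry pseudogroup.

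\emph{The infinitesimal picture.} By Myers--Steenrod $\Isom(\tM,\tg)$ is a Lie group; let $G$ be its identity component and $\g$ its Lie algebra, i.e.\ the algebra of complete Killing fields, and let $\Gamma:=\pi_1(M)\subset\Isom(\tM,\tg)$, which acts on $\tM$ freely, properly discontinuously and cocompactly, and normalises $G$. Since $\tM$ is simply connected and real analytic, Nomizu's theorems apply: every local Killing field extends globally and every local isometry analytically continues to a global one, so $\g$ and $G$ faithfully record the local geometry. Put $\g(x)=\{X(x):X\in\g\}\subset T_x\tM$. On the $\Gamma$-invariant open set $\tM^{\circ}$ where $\dim\g(x)$ is locally maximal, the $G$-orbits are the leaves of the (there regular) integrable distribution $\g(\cdot)$; for such a leaf $F$, any $X\in\g$ tangent to $F$ has isometric flow preserving $F$, whence $X|_F$ is a Killing field of $(F,\tg|_F)$, and as these span $T_pF=\g(p)$ for all $p\in F$, the real analytic manifold $(F,\tg|_F)$ is infinitesimally homogeneous and hence Riemannian locally homogeneous. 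These leaves are to be the regular fibres of $\Phi$.

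\emph{Definability and the construction of $\Phi$.} The covariant derivatives $\nabla^k R$ of the curvature of $\tg$ are universal polynomials in $\tg,\tg^{-1}$ and the partial derivatives of $\tg$, hence $\Sigma$-definable; by Singer's infinitesimal-homogeneity criterion there is an order $N=N(\dim\tM)$ such that the germ of $(\tM,\tg)$ at $x$ up to local isometry --- and in particular $\g(x)$ and the set $\tM^{\circ}$ --- is determined by the jet $(\nabla^k R)_x$, $k\le N$. Thus $\kappa\colon\tM\to\mathcal{J}_N/\Ogr(\dim\tM)$, $x\mapsto$ the $\Ogr(\dim\tM)$-orbit of the curvature $N$-jet at $x$, is $\Sigma$-definable and real analytic, and --- using the analytic continuation of local isometries --- over $\tM^{\circ}$ its fibres are, up to finitely many connected components, unions of $G$-orbits; o-minimality is what ensures that these fibres, being definable, have finitely many components, each a definable submanifold. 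I then take $\Phi\colon\tM\to\widetilde K$ to be a $\Sigma$-definable factorisation of $\kappa$ through the space of connected components of its fibres, built using cell decomposition, definable choice and Hardt's definable triviality; by construction $\Phi$ has connected fibres, and over a dense $\Sigma$-definable open part of $\widetilde K$ it is a locally trivial real analytic fibration whose fibre is one of the locally homogeneous leaves above --- which yields the statement about regular fibres. Since the fibres of $\Phi$ are connected, $\pi_1(\widetilde K)$ is a quotient of $\pi_1(\tM)=1$, so $\widetilde K$ is simply connected.

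\emph{The main obstacle: compactness of $\widetilde K$.} The curvature jets are $\Isom(\tM,\tg)$-invariant, in particular $\Gamma$-invariant, so $\kappa$ factors through the compact manifold $M$; hence $\kappa(\tM)$ is compact and the factorisation $\widetilde K\to\kappa(\tM)$ is a ``covering part'' whose finiteness is equivalent to compactness of $\widetilde K$. Proving this finiteness is the heart of the matter: it amounts to saying that the transverse structure of the $G$-orbit foliation of $\tM$ is of \emph{finite type over $\Gamma$} --- a $\Gamma$-invariant $\Sigma$-definable structure transverse to the orbits cannot ``oscillate at infinity'' --- and this is precisely the incarnation here of the meta-principle of the introduction, the one step that is not essentially formal. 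The plan for it is to pass to a $\Sigma$-definable compactification of $\tM$, analyse the partial $\Gamma$-action induced on the (lower-dimensional, $\Sigma$-definable) boundary, and use o-minimal finiteness to bound the family of asymptotic models of $\tg$ near infinity, so that the locus where $(\tM,\tg)$ fails to be asymptotically homogeneous is bounded, hence relatively compact, and surjects onto $\widetilde K$. When $M$ is aspherical the same circle of ideas should force $\widetilde K$ to be a point, so that $(\tM,\tg)$ is globally homogeneous --- the aspherical case advertised in the abstract.
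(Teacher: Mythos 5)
Your overall strategy---send $\tM$ to a finite-order curvature invariant, use o-minimality to get finiteness of connected components of the fibres, and use Singer-type infinitesimal homogeneity to identify the regular fibres with orbits of local isometries---is the same philosophy as the paper's, which uses the Console--Olmos map $W_{\tM}=(w_1,\dots,w_k)\colon \tM\to\R^k$ built from the scalar additive Weyl invariants $\tr(\nabla^{s_1}R\otimes\cdots\otimes\nabla^{s_p}R)$ and the Console--Olmos theorem. But your write-up has a genuine gap exactly where you admit one: you declare the compactness of $\widetilde K$ to be ``the heart of the matter'' and replace its proof by a speculative program (definable compactification of $\tM$, analysis of a partial $\Gamma$-action on the boundary, bounding ``asymptotic models'' of $\tg$). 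None of that is carried out, and none of it is needed. You already have both ingredients of the actual argument: (a) the invariant map is $\Gamma$-invariant, hence descends to $M$ and its image $K$ is compact; (b) its fibres in $\tM$ are definable, hence have finitely many connected components. Put together, these say that the degree of the covering $\widetilde K\to K$ is finite, i.e.\ $\pi_1(K)$ is finite, so $\widetilde K$ is a finite cover of a compact set and therefore compact. The paper does precisely this (and takes $\widetilde K$ to be the universal cover of $K$ with $\Phi$ the canonical lift of $W_{\tM}$, which exists because $\tM$ is simply connected and is definable by its Lemma 2.8). Your ``finite type over $\Gamma$ at infinity'' heuristic is a red herring: no asymptotic analysis is required.

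Two further points where your construction is materially harder than it needs to be, and not justified as written. First, your target $\mathcal{J}_N/\Ogr(\dim\tM)$ is a quotient by a compact group action; you never equip it with a definable (or even Hausdorff) structure. The whole point of the Pr\"ufer--Tricerri--Vanhecke/Console--Olmos formulation is that finitely many $\Ogr(n)$-invariant \emph{scalar} polynomials in the curvature jet suffice, so the target is just $\R^k$ and definability of the map reduces to definability of Weyl invariants (derivatives and traces of definable tensors are definable). Second, you insist on factoring through the space of connected components of the fibres so that $\Phi$ has connected fibres, and you build this leaf space by ``cell decomposition, definable choice and Hardt's definable triviality''---this is not a construction, and the simple connectivity of such a quotient does not follow merely from ``$\pi_1(\widetilde K)$ is a quotient of $\pi_1(\tM)$'' without establishing some local triviality of the quotient map. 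The theorem does not require connected fibres: taking $\widetilde K$ to be the universal cover of $K$ (simply connected by fiat, compact and definable because the cover is finite) avoids the leaf-space construction entirely. Your infinitesimal picture (Myers--Steenrod, Nomizu extension of local Killing fields, $\g(x)$ spanning the tangent spaces of the regular fibres) is essentially a re-derivation of the Console--Olmos theorem and is fine in spirit, but as submitted the proof is incomplete at the step you yourself flagged.
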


In fact, we prove a stronger result: for every point $x \in \widetilde{M}$ which is not a critical point of $\Phi$, there exists a connected Lie group $G$ and an open neighbourhood $U$ of $x$, such that $G$ locally acts on $(U, \widetilde{g})$ by isometries and the fibres of $\Phi$ are given by the orbits of this action.

 If the map $\Phi$ has no critical points, Theorem \ref{mainA} implies that $(\widetilde{M}, \widetilde{g})$ is fibred in locally homogeneous submanifolds over a compact topological space. In this case, we get a Riemannian counterpart to the Conjecture \ref{KP conj}. We provide some examples where the map $\Phi$ has critical points, see Example \ref{revolution}. In this case, the best one can hope for is to find a stratification of $\widetilde{M}$ by $\Sigma$-definable locally closed subsets, each of which is fibred with locally homogeneous fibres. We show that this is indeed the case (Proposition \ref{strata}) and that $M$ can be written as a union of closed locally homogeneous submanifolds which are \emph{strongly bi-definable} in the sense of Definition \ref{bi-def}, see Corollary \ref{union}.

 We also deduce that a compact manifold $M$ satisfies the assumptions of Theorem \ref{mainA}, then $\pi_1(M)$ is quasi-isometric to a locally homogeneous space, which seems to be a subtle geometric-group-theoretic property (Corollary \ref{quasi-isometric}).

If $M$ is aspherical, Theorem \ref{mainA} can be strengthened as the following.

\begin{mainthm}\label{mainB}[Theorem \ref{homogene}]
Assume $M, \ g$ and $\tM$ are as in Theorem \ref{mainA}. Assume moreover that $\tM$ is contractible. Then $\widetilde{K}$ is a point and $(\tM, \widetilde{g})$ is a Riemannian homogeneous manifold.
\end{mainthm}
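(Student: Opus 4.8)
The plan is to bootstrap Theorem \ref{mainB} from Theorem \ref{mainA} together with the classical analytic continuation of local Killing fields. The starting observation is that the map $\Phi\colon\tM\to\widetilde{K}$ of Theorem \ref{mainA} is intrinsic to $(\tM,\widetilde{g})$: its fibres are the leaves of the (singular) foliation whose regular leaves are the orbits of the local isometric Lie group actions supplied by the refinement stated right after Theorem \ref{mainA}. Since every deck transformation of $\pi\colon\tM\to M$ is an isometry of $\widetilde{g}=\pi^{*}g$, the deck group $\Gamma:=\pi_{1}(M)$ permutes these leaves; hence $\Gamma$ acts on $\widetilde{K}$ and $\Phi$ is $\Gamma$-equivariant. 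Moreover, by the very construction of $\Phi$, the space $\widetilde{K}$ is a point if and only if this foliation has a single leaf, i.e. if and only if $(\tM,\widetilde{g})$ is itself locally homogeneous. In that case we are done: $\tM$ is complete (being a Riemannian cover of the compact $M$), simply connected, and $\widetilde{g}$ is real analytic, so the classical extension theorem for Killing fields on real-analytic manifolds (Nomizu) applies — every locally defined Killing field of $(\tM,\widetilde{g})$ extends to a global one — and a simply connected, complete, real-analytic, locally homogeneous Riemannian manifold is homogeneous. Thus both assertions of Theorem \ref{mainB} reduce to the single statement that $\widetilde{K}$ is a point.

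To prove that $\widetilde{K}$ is a point we argue by contradiction, assuming the generic fibre $F$ of $\Phi$ has dimension $<n:=\dim\tM$. First I would set up the global symmetry: by Myers--Steenrod $\mathcal{G}:=\Isom(\tM,\widetilde{g})$ is a Lie group acting properly on the complete manifold $\tM$ and containing $\Gamma$ as a discrete cocompact subgroup; by the analytic continuation used above, the local Killing fields generating the local actions of Theorem \ref{mainA} are restrictions of global ones, so each regular fibre of $\Phi$ is contained in a single $\mathcal{G}^{0}$-orbit, and in fact the $\mathcal{G}^{0}$-orbits through regular points are exactly the regular fibres of $\Phi$. Hence, over the regular locus, $\Phi$ factors through the orbit map of $\mathcal{G}^{0}$, and it suffices to show that $\mathcal{G}^{0}$ has an open orbit. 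Since $\mathcal{G}^{0}$ is normal in $\mathcal{G}$, the subgroup $\Gamma\mathcal{G}^{0}$ still acts cocompactly on $\tM$ and descends to a (generally non--properly discontinuous) action of the infinite discrete group $\Gamma\mathcal{G}^{0}/\mathcal{G}^{0}$ on the leaf space, which away from the critical locus is a $\Sigma$-definable manifold $B$, and there $\tM\to B$ is a locally trivial bundle with locally homogeneous, typically non-compact fibre $F$.

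The heart of the matter — and the step I expect to be the main obstacle — is to derive a contradiction from the asphericity of $M$. Here $M$ is a closed aspherical $n$-manifold, so $\tM$ is contractible and $\Gamma$ is an $n$-dimensional Poincar\'e duality group; I would feed the local triviality over the regular locus into the Leray spectral sequence of $\Phi$ (combined with the definable Sard theorem and the stratification of Proposition \ref{strata} to absorb the critical locus and the possible singularities of $\widetilde{K}$), and then compare it with the vanishing of the reduced cohomology of the contractible $\tM$ and with equivariant Poincar\'e duality for $\Gamma$; the conclusion should be that a positive-dimensional fibre $F$ together with a compact simply connected base $\widetilde{K}$ carrying a cocompact action of an infinite discrete group forces $\operatorname{cd}(\Gamma)<n$, which is absurd. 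In the special case where $\Phi$ has no critical points this is cleanest: $\Phi\colon\tM\to\widetilde{K}$ is a submersion onto the compact simply connected $\widetilde{K}$, and the long exact homotopy sequence of the induced fibration together with $\tM$ contractible already pins down the homotopy type of $\widetilde{K}$ — one must still exclude the exotic possibility of a contractible but non-trivial compact definable $\widetilde{K}$, which is where the $\Gamma$-action is indispensable. The subtleties I anticipate are exactly that the $\Gamma$-action on $\widetilde{K}$ need not be discrete, that the induced metric on the fibres of $\Phi$ need not be complete, and that $\widetilde{K}$ may be singular, so that o-minimal tameness, rather than classical differential topology, must be used throughout.

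Once $\widetilde{K}$ is shown to be a point, the first paragraph gives that $(\tM,\widetilde{g})$ is locally homogeneous, and Nomizu's extension theorem upgrades this to homogeneity, completing the proof of Theorem \ref{mainB}.
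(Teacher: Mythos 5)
Your reduction in the first paragraph is sound and matches the paper's: $\widetilde{K}$ is a point iff the Console--Olmos/Weyl-invariant map is constant iff $(\tM,\widetilde{g})$ is locally homogeneous, and then completeness plus simple connectivity upgrades this to homogeneity (the paper quotes Pr\"ufer--Tricerri--Vanhecke for this last step rather than Nomizu, but that is the same point). The problem is that the entire content of the theorem sits in the step you yourself flag as ``the main obstacle,'' and there you only gesture at an argument: a Leray spectral sequence for $\Phi\colon\tM\to\widetilde{K}$ over a possibly singular base with a non--properly discontinuous $\Gamma$-action, combined with an unspecified ``equivariant Poincar\'e duality,'' with the conclusion that this ``should'' force $\operatorname{cd}(\Gamma)<n$. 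None of this is carried out, and the route you choose is burdened with exactly the difficulties you list (singular $\widetilde{K}$, critical locus, non-discrete action on the leaf space); as written there is a genuine gap.

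The paper avoids all of this by not analyzing the global fibration at all. It picks a \emph{single connected component} $N$ of a regular level set of the Console--Olmos map $W_M$ \emph{downstairs} in $M$. Then $N$ is a closed submanifold whose full preimage $\widetilde{N}=\pi^{-1}(N)$ is a definable subset of $\tM$ (a union of components of a level set of $W_{\tM}$), hence has finitely generated $\Z/2\Z$-cohomology by the finite cell decomposition. Lemma \ref{podkorytov} then applies the Cartan--Leray spectral sequence of the $\Gamma$-cover $\widetilde{N}\to N$, $E_2^{i,j}=H^i(\Gamma,H^j(\widetilde{N},\Z/2\Z))\Rightarrow H^{i+j}(N,\Z/2\Z)$: after passing to a finite cover to kill the monodromy on $H^{\bullet}(\widetilde{N},\Z/2\Z)$ (Proposition \ref{kill the action} --- a step entirely absent from your sketch), the entry $E_2^{n,r}$ with $r=\max\{j: H^j(\widetilde{N},\Z/2\Z)\neq 0\}$ survives because $H^{>n}(\Gamma,\Z/2\Z)=0$ and $H^{r+1}(\widetilde{N},\Z/2\Z)=0$, giving $H^{n+r}(N,\Z/2\Z)\neq 0$ and hence $n+r\le\dim N\le n$, so $N=M$ and $W_M$ is constant. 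If you want to complete your proof, the missing idea is precisely this: transfer the problem to one closed regular fibre component in the compact quotient and exploit the definability (hence cohomological finiteness) of its \emph{entire} preimage in $\tM$, rather than trying to control the global structure of $\Phi$.
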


The proof of Theorem \ref{mainB} is based on a topological lemma (Lemma \ref{podkorytov}) that gives control over submanifolds in aspherical manifolds with ''topologically tame'' preimage in the universal cover. One of the general consequences of this lemma in o-minimal geometry is that compact aspherical manifolds with definable universal covers do not admit strict submanifolds that are \emph{strongly be-definable} (see Corollary \ref{no bi-def}; Definition \ref{bi-def} for the notion of strongly be-definable submanifolds).

From this, we deduce an application to the original Koll\'ar-Pardon conjecture.

\begin{mainthm}[Corollary \ref{KP special}]\label{mainC}
Let $X$ be a smooth complex projective variety with universal cover $\pi \colon \widetilde{X} \to X$. Assume that $\widetilde{X}$ admits a structure of a definable complex manifold in some o-minimal structure. Assume moreover that  $X$ is aspherical and admits a \emph{bi-definable K\"ahler metric}, that is, a K\"ahler Hermitian metric $h=g + \sqrt{-1}\omega$ such that $\pi^*h$ is definable on $\widetilde{X}$. Then $(\widetilde{X}, \pi^*h)$ is a K\"ahler homogeneous manifold. In particular, the Koll\'ar-Pardon Conjecture holds for $X$.
\end{mainthm}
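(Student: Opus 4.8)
The plan is to deduce Theorem~\ref{mainC} from Theorem~\ref{mainB} and then upgrade the Riemannian homogeneity it provides to K\"ahler homogeneity. Write $h = g + \sqrt{-1}\,\omega$ and set $\widetilde g := \pi^*g = \operatorname{Re}(\pi^*h)$. Since $\pi^*h$ is $\Sigma$-definable on $\widetilde X$ and passing to the real part of a symmetric bilinear form is a definable operation, $\widetilde g$ is $\Sigma$-definable. A smooth complex projective variety is a compact real-analytic manifold, and $X$ is aspherical, so $\widetilde X$ is contractible. Taking $g=\operatorname{Re} h$ (and assuming, as discussed below, that $h$ may be taken real-analytic), the hypotheses of Theorem~\ref{mainB} hold, and we conclude that $(\widetilde X,\widetilde g)$ is a Riemannian homogeneous manifold; in particular $G := \Isom^0(\widetilde X,\widetilde g)$ is a connected Lie group acting transitively on $\widetilde X$ by isometries.

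The core is to promote this to K\"ahler homogeneity, i.e.\ to exhibit a Lie group acting transitively on $\widetilde X$ by \emph{holomorphic} isometries. The obstacle is that $J$ need not be $G$-invariant; but $h$ is K\"ahler, so $J$ is parallel for the Levi-Civita connection of $\widetilde g$, and the idea is to combine this with the de Rham decomposition. Split off the Euclidean factor, $\widetilde X \cong \C^m \times N$ with $N$ the product of the irreducible non-flat de Rham factors: since $\widetilde g$ is complete, simply connected and K\"ahler, this splitting is holomorphic, each factor is K\"ahler, $\Isom^0(\widetilde X) = \Isom^0(\C^m) \times \Isom^0(N)$, and $N$ is itself Riemannian homogeneous and contractible. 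On the flat factor $J|_{\C^m}$ is parallel hence constant, so $(\C^m, J|_{\C^m})$ is the standard $\C^m$ with the translation group acting holomorphically and transitively. On $N$, any $\widetilde g$-parallel, $\widetilde g$-orthogonal complex structure corresponds to a holonomy-invariant one on a tangent space, and since the holonomy of an irreducible non-flat K\"ahler factor acts irreducibly inside the unitary group, Schur's lemma forces it to be $\pm J$ on each factor (the a priori alternative of a hyperk\"ahler factor is excluded because a homogeneous Ricci-flat manifold is flat); hence there are only finitely many such structures. As $\Isom^0(N)$ is connected and permutes this finite set (isometries carry parallel compatible complex structures to parallel compatible ones), it fixes $J|_N$, so $J|_N$ is $\Isom^0(N)$-invariant and $N$ is K\"ahler homogeneous. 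Therefore $\widetilde X \cong \C^m \times N$ is K\"ahler homogeneous.

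Finally, being a complete, simply connected, contractible K\"ahler homogeneous manifold, $\widetilde X$ is biholomorphic and isometric to $\C^m \times D$ with $D$ a bounded symmetric domain, by \cite{Ghys}, Th\'eor\`eme~2.5 (contractibility excludes a compact de Rham factor). In the notation of Conjecture~\ref{KP conj} this is the case $F=\{\mathrm{pt}\}$, so the Koll\'ar-Pardon conjecture holds for $X$.

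I expect two points to require genuine care. The first is the real-analyticity issue raised above: a bi-definable K\"ahler metric cannot be perturbed to a real-analytic representative of its K\"ahler class, because a $\pi_1(X)$-invariant definable function on $\widetilde X$ is forced to be constant (exactly the phenomenon in the paper's opening paragraph); one must therefore either show that a definable K\"ahler metric on a projective variety is automatically real-analytic, or invoke a form of Theorem~\ref{mainB} valid for smooth definable metrics. The second, specific to Theorem~\ref{mainC}, is the treatment of the flat de Rham factor: the complex structure there is neither canonical nor $\Isom$-invariant, so K\"ahler homogeneity of $\widetilde X$ genuinely requires the splitting $\widetilde X \cong \C^m \times N$, and the holonomy/Schur step must be carried out at the level of identity components, so that isometric de Rham factors are not permuted and the parallel complex structures really are as few as claimed.
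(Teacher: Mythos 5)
Your argument is correct, but the key step --- upgrading the Riemannian homogeneity of $(\widetilde X,\widetilde g)$ supplied by Theorem~\ref{homogene} to K\"ahler homogeneity --- is carried out by a genuinely different route than in the paper. The paper's proof is a one-liner: the K\"ahler form $\widetilde\omega$ is harmonic, a Killing field preserves harmonic forms, hence a transitive isometric action preserves $\widetilde\omega$ and therefore $J$. Your route instead splits $\widetilde X$ by the de Rham theorem into $\C^m\times N$, observes that the parallel tensor $J$ respects the splitting, handles the flat factor by hand (translations act holomorphically even though $\Isom^0(\C^m)$ does not preserve $J$), and on each irreducible non-flat factor uses Schur's lemma on the holonomy commutant together with Alekseevskii--Kimelfeld (a homogeneous Ricci-flat manifold is flat, excluding the quaternionic commutant) to cut the parallel compatible complex structures down to $\{\pm J\}$, a two-point set that the connected group $\Isom^0(N_i)$ must fix. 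This is longer, but it buys precision exactly where the short argument is thinnest: on a non-compact manifold an exact harmonic form need not vanish, and already on $\C^n$ (e.g.\ $X$ an abelian variety) the Killing fields in $\mathfrak{so}(2n)\setminus\mathfrak{u}(n)$ do not preserve $\widetilde\omega$, so the honest conclusion is that \emph{some} transitive subgroup of isometries preserves $J$, not that every one does --- which is precisely what your flat/non-flat dichotomy delivers. Your two flagged caveats are reasonable but not fatal: the regularity issue (Theorem~\ref{homogene} is stated for real-analytic metrics while a bi-definable K\"ahler metric is a priori only smooth) is equally present in the paper's own invocation of that theorem and is harmless because the Console--Olmos machinery only needs $\mathcal{C}^m$ regularity for $m$ large (Remark~\ref{co singular}); and your observation that one cannot repair this by passing to a real-analytic representative of the K\"ahler class without destroying bi-definability is accurate.
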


The paper is organised as follows. In Section \ref{prelim} we briefly recall the necessary preliminaries. We discuss o-minimal structures and o-minimal geometry in subsection \ref{o-min} and \ref{o-min top} and translate the classical differential-geometric notions to the context of o-minimal geometry in subsection \ref{o-min diff}. In subsection \ref{Singer part} we recall the relation between cohomogeneity and additive Weyl invariants after Console-Olmos (\cite{CO}), a blackbox from Riemannian geometry which plays a key role in the proof. In Section \ref{main part} we introduce the concept of (strong) bi-definability (subsection \ref{bi-def part}). We prove Theorem \ref{mainA} in subsection \ref{proof a part}, Lemma \ref{podkorytov} and Theorem \ref{mainB} in subsection \ref{proof b part}, and Theorem \ref{mainC} in subsection \ref{kp part}. We finish with a discussion on some open questions (subsection \ref{questions}).
\\

\textit{Acknowledgements.} I am thankful to Bruno Klingler for turning my attention to the Koll\'ar-Pardon Conjecture and several fruitful discussions. I am also thankful to Paul Brommer-Wierig for interesting discussions and to Semyon Podkorytov for sharing with me the main ideas behind  Lemma \ref{podkorytov}.

\section{Preliminaries}\label{prelim}

\subsection{o-minimal geometry}\label{o-min} The notion of an o-minimal structure originated in model theory and  found many applications  in other areas of mathematics during the last decade, specifically in complex algebraic geometry (\cite{BKT}, \cite{BBT}, etc.) In this section, we briefly recall the basics of o-minimal geometry and mention all the needed geometric properties of o-minimally definable sets. The primary reference for this part is \cite{vdD}.

\begin{df}
A \emph{structure} $\Sigma$ is a collection $(\Sigma_n)_{n \in \N}$, where each $\Sigma_n$ is a boolean algebra of subsets of $\R^n$ and the following holds:
\begin{itemize}
\item[(i)] if $A \in \Sigma_n$ and $B \in \Sigma_m$, then $A \times B \in \Sigma_{n+m}$;
\item[(ii)] if $A \in \Sigma_n$ and $p_i \colon \R^{n} \to \R^{n-1}$ is the standard projection $$(x_1, \ldots, x_n) \mapsto (x_1, \ldots x_{i-1},x_{i+1}  \ldots, x_{n-1}),$$ then $p_i(A) \in \Sigma_{n-1}$;
\item[(iii)] every semi-algebraic subset $A \subseteq \R^n$ is in $\Sigma_n$. Recall, that $A$ is semi-algebraic if it can be defined using a finite system of real polynomial inequalities.
\end{itemize}
The elements of $\Sigma_n$ are called ($\Sigma$-)\emph{definable sets}.
If $A \in \Sigma_n$ and $B \in \Sigma_m$ are two definable sets, then a map $f \colon A \to B$ is called \emph{definable} if its graph $\Gamma_f \subseteq A \times B$ is definable.
\end{df}

The most elementary example is given by the structure of semi-algebraic sets $\R_{\alg}$.  Notice that the property \textit{(ii)} for $\R_{\alg}$ is a consequence of the Tarski-Seidenberg Quantifier Elimination Theorem.

If $\Sigma$ is a structure, images and preimages of $\Sigma$-definable sets under $\Sigma$-definable maps are $\Sigma$-definable. Moreover, if $A \subseteq \R^n$ is $\Sigma$-definable then its topological closure $\overline{A}$, interior $A^{\circ}$, boundary $\di A = \overline{A} \setminus A$ and frontier $\overline{A} \setminus A^{\circ}$ are $\Sigma$-definable (see e.g. \cite{vdD}, Chapters 1 and 2).

\begin{df}
A structure $\Sigma$ is called \emph{o-minimal}, if $\Sigma_1=(\R_{\alg})_1$. Equivalently, the only one-dimensional $\Sigma$-definable sets are finite collections of points, intervals and rays.
\end{df}

The sets definable in $\R_{alg}$ share many nice topological properties, and restricting oneself to working with semi-algebraic sets is a safe way to escape pathological counterexamples from point-set topology. What is spectacular, is that the same feature remains for any o-minimal structure. Below we list the main geometric properties of sets definable in o-minimal structures.

\begin{thrm}\label{o-min nice}
Let $\Sigma$ be an o-minimal structure.
\begin{itemize}
\item[(i)] A $\Sigma$-definable set is connected if and only if it is path-connected;
\item[(ii)] definable sets admit \emph{finite cell decomposition}: every definable $A \subset \R^n$ can be written as $A = A_1 \sqcup \ldots \sqcup A_k$, where each $A_j \subseteq A$ is a locally closed definable subset definably homeomorphic to $\R^{n_j}$. In particular, every definable set has finitely many connected components and is homotopically equivalent to a finite CW-complex;
\item[(iii)] definable functions are piecewise-continuous; moreover, if $A$ is a definable set and $f \colon A \to \R$ is a definable map, for every $k \ge 0$ there exists a finite decomposition $A = A_1 \sqcup \ldots \sqcup A_m$ into definable locally closed sets $A_j$ such that $f|_{A_j}$ is of class $\mathcal{C}^{k}$;
\item[(iv)] if $A \subseteq \R$ is a definable subset of $\R$ and $f \colon A \to \R$ is a definable function, then its derivative $f'$ is definable whenever it exists.
\end{itemize}
\end{thrm}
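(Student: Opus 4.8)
The workhorse behind (i)--(iii) is the \emph{Cell Decomposition Theorem}, and I would prove all three as corollaries of it, establishing the latter first by a simultaneous induction on the ambient dimension $n$. Recall that a \emph{cell} in $\R^n$ is defined inductively: cells in $\R^1$ are points and open intervals; a cell in $\R^{n}$ is either the graph of a definable continuous function over a cell $C\subseteq\R^{n-1}$, or the band between two such graphs (with $\pm\infty$ allowed) over $C$, so in particular every cell is definably homeomorphic to some $\R^{n_j}$. The statement to prove by induction on $n$ is: (a) every finite family of definable subsets of $\R^n$ admits a partition into finitely many cells, each compatible with every member of the family; and, simultaneously, (b) for every definable $f\colon A\to\R$ with $A\subseteq\R^n$ there is such a cell decomposition of $A$ on each piece of which $f$ is continuous --- and, for the $\mathcal C^k$ strengthening needed in (iii), of class $\mathcal C^k$.

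For $n=1$ the content of (a)+(b) is exactly o-minimality together with the \emph{Monotonicity Theorem}: a definable $f\colon(a,b)\to\R$ admits a partition of $(a,b)$ into finitely many points and open subintervals on each of which $f$ is continuous and either constant or strictly monotone. This is forced by o-minimality: the subsets of $(a,b)$ on which $f$ is, locally, constant, strictly increasing, strictly decreasing, or none of these, are all definable, hence each is a finite union of points and intervals, and one checks (again using o-minimality on auxiliary definable sets) that the ``none of these'' set and the set of discontinuities contain no interval. The $\mathcal C^k$ refinement uses in addition that a definable function of one real variable is of class $\mathcal C^k$ off a finite set, which itself rests on (iv). The inductive step from $n-1$ to $n$ slices a definable $A\subseteq\R^n$ by the last coordinate, applies o-minimality fibrewise, invokes the inductive hypothesis on $\R^{n-1}$ to make the number of fibre-pieces and the boundary functions definable and piecewise continuous, and then reassembles the resulting graphs and bands into cells. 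Carrying the $\mathcal C^k$ regularity through this reassembly, together with the bookkeeping of the simultaneous induction, is the step I expect to be the main obstacle.

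Granting Cell Decomposition, item (ii) is immediate: a decomposition of $A$ into compatible cells yields the asserted stratification into locally closed definable pieces each definably homeomorphic to some $\R^{n_j}$; finiteness of the number of cells gives finitely many connected components, and homotopy equivalence with a finite CW-complex follows from the (equally cell-decomposition-based) definable triangulation theorem. Item (iii) is just the function part (b) of the induction applied to the single definable map $f$. For (i), I would first extract from Cell Decomposition the \emph{Curve Selection Lemma}: if $x\in\overline A$ then there is a definable continuous $\gamma\colon[0,1)\to\R^n$ with $\gamma(0)=x$ and $\gamma((0,1))\subseteq A$ --- reduce to the case where $A$ is a single cell and build $\gamma$ coordinate by coordinate from the defining functions of the cell. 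Then take a cell decomposition $A=\bigsqcup_i C_i$, call $C_i$ and $C_j$ adjacent if one of them meets the closure of the other, and observe that, since finitely many closures are involved, the union of the cells in one adjacency-component of this finite graph is clopen in $A$; as each cell is path-connected and curve selection joins, inside $A$, a point of $C_j$ to any point of $\overline{C_j}\cap C_i$, each such union is path-connected. Hence a connected definable set is path-connected.

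Finally, item (iv) needs no cell decomposition at all: it is pure first-order manipulation. For definable $f\colon A\to\R$ with $A\subseteq\R$, the graph of $f'$ is
\[
\Gamma_{f'}=\Big\{(x,y)\in A\times\R \ :\ \forall\varepsilon>0\ \exists\delta>0\ \forall h\big(0<|h|<\delta \text{ and } x+h\in A \ \Rightarrow\ \big|\tfrac{f(x+h)-f(x)}{h}-y\big|<\varepsilon\big)\Big\},
\]
which is obtained from the definable set $A$ and the definable function $f$ by finitely many boolean operations and coordinate projections, hence is definable; its domain, the set of $x$ where $f'(x)$ exists, is the projection of $\Gamma_{f'}$ onto the first coordinate and is therefore definable as well, and $f'$ coincides with the definable function whose graph is $\Gamma_{f'}$ wherever it exists.
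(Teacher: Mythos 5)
The paper does not actually prove this theorem: it is used as a blackbox, with the proof delegated to van den Dries (\cite{vdD}; Chapter 3, Theorem 2.11 for \textit{(ii)}, Chapter 7 for \textit{(iii)} and \textit{(iv)}). Your sketch is a faithful reconstruction of the architecture of those proofs: the simultaneous induction proving cell decomposition together with piecewise continuity of definable functions, with o-minimality plus the Monotonicity Theorem as the base case; \textit{(ii)} and \textit{(iii)} as direct corollaries (with definable triangulation supplying the finite CW-complex statement); \textit{(i)} via path-connectedness of cells plus a curve-selection/adjacency argument, where your observation that each adjacency-component of the cell graph is clopen in $A$ is exactly the right point; and \textit{(iv)} by writing the graph of $f'$ as a first-order formula over the structure, so that definability follows from closure under boolean operations and projections. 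Two places where the sketch is thinnest, both standard but nontrivial: the $\mathcal{C}^k$ refinement in \textit{(iii)} is genuinely more work than the continuous case (one first needs that a definable one-variable function is $\mathcal{C}^k$ off a finite set, which uses \textit{(iv)} iterated together with o-minimality, and then a uniform-finiteness bookkeeping in the inductive step), and the Curve Selection Lemma itself is usually derived using definable choice rather than directly ``coordinate by coordinate''. Also, in \textit{(iv)} your formula vacuously admits every $y$ at an isolated point of $A$, so one should restrict to non-isolated points of $A$ (a finite set to discard, by o-minimality) before calling $\Gamma_{f'}$ a graph. None of these affects correctness; your proposal matches the intended (cited) proofs.
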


Item \textit{(i)} shows that working with sets definable in o-minimal structure indeed allows one to exclude pathological examples that appear for arbitrary subsets of $\R^n$. All the results of Theorem \ref{o-min nice} can be found in \cite{vdD}. The item  \textit{(ii)} is  \cite{vdD}, Chapter 3, Theorem 2.11. The items \textit{(iii)} and \textit{(iv)} are the content of Chapter 7 \textit{loc. cit.}.

\begin{rmk} \label{smooth cell}
Item \textit{(ii)} of Theorem \ref{o-min nice} can be strengthened as the following. Let $\Sigma$ be an o-minimal structure and $A \subseteq \R^n$ a definable subspace.Then for every $k\ge 0$ there exists a decomposition of $A$ into a disjoint union of locally closed subsets
\[
A= A_1 \sqcup \ldots \sqcup A_k
\]
and a collection of maps $\theta_j \colon A_j \to \R^{n_j}$, where each $\theta_j$ is a definable homeomorphism of class $\mathcal{C}^k$. In particular, every $A_j$ is a $\mathcal{C}^k$-submanifold of $\R^n$. For the proof see Theorem 6.6 in \cite{Cos}. In \cite{RSW} the authors showed that one can replace $k$ with $\infty$ if $\Sigma$ is a  so-called \emph{structure generated by a quasianalytic Denjoy-Carleman class} (in what follows we will call it \emph{of QA type}, following the abbreviation in \cite{RSW}). The structures $\R_{alg}, \R_{an}, \R_{exp}$ and $\R_{an,exp}$ are all of QA type.
\end{rmk}

An example of a set which is not definable in any o-minimal structure is the graph of the sine function $\{(x, \sin x)\} \subset \R^2$. Indeed, the sine function can not be definable since  $\sin^{-1}(0)=\{n\pi| n \in \Z\} \subset \R$ is not in $(\R_{alg})_1$. Another example which is relevant to our goals is the following. Let $M_1$ and $M_2$ be definable subsets of $\R^n$ and $f \colon M_1 \to M_2$ an infinite-folded cover. Then the map $f$ is not definable in any o-minimal structure since a fibre over a point is a discrete countable set and can not be definable by Theorem \ref{o-min nice}, \textit{(ii)}.

O-minimal structures are in general very hard to construct. A structure $\Sigma'$ is called an \emph{expansion} of a structure $\Sigma$ if every $\Sigma$-definable set is $\Sigma'$-definable. If $\Sigma$ is a structure and $\mathcal{T}$ is a collection of subsets of $\R^n$, by Zorn's lemma there exists a minimal structure $\Sigma\langle \mathcal{T} \rangle$ which is an expansion of $\Sigma$ and contains $\mathcal{T}$. Of course, even if we start with an o-minimal structure $\Sigma$, its expansion $\Sigma \langle \mathcal{T} \rangle$ does not need to be o-minimal. Here is the list of main known o-minimal structures:
\begin{itemize}
\item $\R_{\alg}$;
\item $\R_{\mathrm{an}}$. This structure is defined as $\R_{\alg}\langle \mathcal{T} \rangle$, where $\mathcal{T}$ is the set of graphs of functions $f \colon \R^n \to \R$, where $f \equiv 0$ outside a compact $K \subset \R^n$ and $f|_K$ is analytic. The o-minimality of this structure was proved by van den Dries based on the result of Gabrielov \cite{Gab};
\item $\R_{\exp}$. This is structure is defined as $\R_{\alg} \langle  \Gamma_{\operatorname{exp}} \rangle$, where $\Gamma_{\operatorname{exp}}=\{(t, e^t)\} \subset \R^2$ is the graph of real exponent. The o-minimality is due to Wilkie \cite{Wil}.
\item $\R_{\mathrm{an}, \exp}$. This is the minimal structure which is an expansion of both $\R_{\mathrm{an}}$ and $\R_{\exp}$. Its o-minimality is proved in \cite{vdDM}.
\item The \emph{Pfaffian closure} of an o-minimal structure is an o-minimal structure, see \cite{Speis}.
\end{itemize}

\subsection{o-minimal spaces}\label{o-min top}

One would like to consider definable geometrical objects without fixing their embedding into $\R^n$. This motivates the following definition:

\begin{df}\label{dif top space}
Let $\Sigma$ be an o-minimal structure and $X$ a topological space. A $\Sigma$-\emph{definable atlas} $\xi=(U_{\alpha}, \phi_{\alpha})_{\alpha \in A}$ is a finite collection of open charts $U_{\alpha}, \ X = \bigcup_{\alpha} U_{\alpha}$ and  homeomorphic embeddings $\phi_{\alpha} \colon U_{\alpha} \to \R^{n}$, such that 
\begin{itemize}
\item[(i)] $\phi_{\alpha}(U_{\alpha} \cap U_{\beta})$ are $\Sigma$-definable in $\R^n$ for every $\alpha$ and $\beta$;
\item[(ii)] the transition functions $\phi_\beta \circ \phi_{\alpha}^{-1} \colon \phi_{\alpha}(U_{\alpha} \cap U_{\beta}) \to \phi_{\beta}(U_{\alpha} \cap U_{\beta})$ are $\Sigma$-definable. 
\end{itemize} 
A $\Sigma$-definable topological space is a topological space $X$ with a fixed equivalence class of $\Sigma$-definable atlases.
\end{df}

The finiteness of the atlas $\xi$ is crucial in Definition \ref{dif top space} since it allows one to extend geometrical and topological finiteness properties of definable sets to a broader context of definable spaces.

In what follows we will always assume that $\Sigma$ is an o-minimal structure which is an expansion of $\R_{an}$ and we will often not specify $\Sigma$ when talking about definable spaces. We all also abusively refer to a pair $(X, \xi)$ as \emph{definable space}, although one topological space a priori can admit non-equivalent definable structures.

If $X$ is a definable space, we say that a subspace $Y \subseteq X$ is definable, if there exists a finite definable atlas $(U_{\alpha}, \phi_{\alpha})$, such that $\phi_{\alpha}(Y \cap U_{\alpha})$ is definable for every $\alpha$. A product of two definable spaces carries a natural definable space structure, which allows us to speak about definable maps between definable spaces. All the properties listed in Theorem \ref{o-min nice} transfer directly to the context of definable spaces.  The Cellular Decomposition Theorem (Theorem \ref{o-min nice}, \textit{(ii)}) implies that a definable topological space is homeomorphic to a finite CW-complex.

Notice that in Definition \ref{dif top space} we do not require the sets $\phi_{\alpha}(U_{\alpha})$ to be smooth, therefore a definable topological space is not necessarily a manifold. One says that $M$ is a $\Sigma$-definable smooth (respectively $\mathcal{C}^k$-) manifold if it is a definable topological space and the definable atlas defines a smooth (respectively $\mathcal{C}^k$-) manifold structure on $M$. Again, a definable manifold is not a property of a smooth manifold, but a structure on it. Since in this paper we are not considering different definable structures on the same manifold, we abbreviate \emph{''definable manifold''} for ''a manifold admitting a structure of a manifold definable in an o-minimal structure $\Sigma$''.

In spite of the fact that not every definable topological space is a definable manifold, it admits a stratification by definable $\mathcal{C}^k$- (or sometimes even $\mathcal{C}^{\infty}$-) manifolds, as we explain below.

By Theorem \ref{o-min nice}, \textit{(ii)} there exists a decomposition $X= X_1 \sqcup \ldots \sqcup X_k$, where each $X_i$ is a locally closed definable subset of $X$ definably homeomorphic to $\R^{n_i}$. The \emph{dimension} of $X$ is defined as $\dim X:= \max \{n_i\}$ (by definition, the dimension of an empty set is $-\infty$). This notion does not depend on the choice of a cellular decomposition and gives rise to a nice dimension theory for definable topological spaces, see \cite{vdD}, Chapter 4.  In particular, if $A \subseteq X$ is an open definable subset of a  definable topological space, $\dim \di A < \dim A$.

\begin{lemma}\label{smooth strata}
Let $\Sigma$ be an o-minimal structure and $X$ be $\Sigma$-definable space. Then for every $k\ge0$, there exists a stratification by definable locally closed subsets
\[
\varnothing=S_0 \subset \ldots \subset S_r=X,
\]
such that  $\dim S_{i-1}<\dim S_{i}$ and each $V_i:=S_i\setminus S_{i-1}$ is a $\mathcal{C}^k$-definable manifold (with respect to the induced definable structure). If, moreover, $\Sigma$ is of QA type (see Remark \ref{smooth cell}), one can also take  $k=\infty$.
\end{lemma}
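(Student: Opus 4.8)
The plan is to reduce the statement to the cell-decomposition results already quoted, applied componentwise on the charts, and then to patch the resulting local stratifications into a global one by a dimension-induction argument.

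First I would recall the set-up: $X$ is a $\Sigma$-definable space, so by definition it has a finite atlas $\xi=(U_\alpha,\phi_\alpha)_{\alpha\in A}$ with $\phi_\alpha(U_\alpha)\subseteq\R^{n_\alpha}$ definable and definable transition maps. Fix $k\ge 0$. Working inside a single chart $U_\alpha$, I would apply the strengthened cell decomposition of Remark \ref{smooth cell} (Theorem 6.6 of \cite{Cos}, or \cite{RSW} when $\Sigma$ is of QA type, which is exactly where the $k=\infty$ clause comes from) to the definable set $\phi_\alpha(U_\alpha)\subseteq\R^{n_\alpha}$: this gives a finite partition into definable locally closed pieces each of which is, via a definable $\mathcal{C}^k$-homeomorphism onto some $\R^{m}$, a $\mathcal{C}^k$-submanifold of $\R^{n_\alpha}$. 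Pulling this back by $\phi_\alpha$ yields, on each chart, a finite partition of $U_\alpha$ into definable locally closed $\mathcal{C}^k$-manifolds. The mild subtlety here is that the partitions on overlapping charts need not agree; but intersecting all of them (a finite common refinement — using that finite intersections and differences of definable locally closed sets are definable and locally closed, and that after one more application of Remark \ref{smooth cell} one may assume each refined piece is a $\mathcal{C}^k$-manifold) produces a \emph{single} finite partition $X=\bigsqcup_{j=1}^{N} W_j$ into definable locally closed subsets such that each $W_j$ is a $\mathcal{C}^k$-definable manifold in the induced definable structure. Definability of the $W_j$ as subspaces of $X$ is immediate from the chart-wise description, since $\phi_\alpha(W_j\cap U_\alpha)$ is definable for every $\alpha$.

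Next I would organise the pieces $W_j$ into an increasing filtration by closed definable subsets with the required dimension drop. Let $d=\dim X$. Group the $W_j$ by dimension and set, for $0\le i\le d$, $S_i:=\bigcup\{\,\overline{W_j}\,:\,\dim W_j\le i\,\}$ (closure taken in $X$); put $S_{-1}=\varnothing$ and relabel to drop repetitions and empty levels so as to get $\varnothing=S_0\subset\cdots\subset S_r=X$. Each $S_i$ is definable (finite union of closures of definable sets, and closures of definable subsets of a definable space are definable, as recorded after Theorem \ref{o-min nice}) and closed in $X$ by construction. The dimension inequality $\dim S_{i-1}<\dim S_i$ follows from the o-minimal dimension theory (\cite{vdD}, Chapter 4): $\dim\overline{W_j}=\dim W_j$, so $\dim S_i=i$ at each retained level. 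The point that needs a word of care is that $V_i:=S_i\setminus S_{i-1}$ should again be a $\mathcal{C}^k$-definable manifold; it is a definable set of pure dimension $i$, and by construction it is a finite disjoint union of those $W_j$ with $\dim W_j=i$ together with possibly lower-dimensional leftover frontier pieces — but since $\dim\di W_j<\dim W_j$ for $W_j$ open in its closure (Theorem \ref{o-min nice} and the displayed inequality $\dim\di A<\dim A$ for open definable $A$), those frontier pieces lie in $S_{i-1}$ and are removed, so $V_i$ is exactly the disjoint union of the top-dimensional cells, each an open submanifold of itself, hence a $\mathcal{C}^k$-manifold. (Disjointness of distinct $W_j$ of the same dimension after refinement guarantees $V_i$ is genuinely a manifold and not merely a union with singular crossings; if one prefers, one absorbs any such crossing locus, being of strictly smaller dimension, into the next lower stratum by one further pass of the same construction.)

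The main obstacle I expect is purely bookkeeping rather than conceptual: ensuring that the chart-wise decompositions can be refined to a \emph{global} partition into locally closed $\mathcal{C}^k$-manifolds without creating pieces whose dimension fails to drop or whose smoothness is lost at chart overlaps. This is handled by the standard trick of taking a common refinement and then re-applying Remark \ref{smooth cell} (finitely many times, since everything in sight is a finite collection of definable sets), together with the frontier dimension inequality which lets us always sweep lower-dimensional ``bad'' loci down one level in the filtration. The $k=\infty$ case adds nothing new beyond invoking the \cite{RSW} strengthening of Remark \ref{smooth cell} in place of \cite{Cos} at the one spot where cells are smoothed.
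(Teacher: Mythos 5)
Your overall strategy (invoke the $\mathcal{C}^k$ cell decomposition of Remark \ref{smooth cell} and then organise the pieces by dimension) is the same as the paper's, but the way you assemble the filtration has a genuine gap. You set $S_i:=\bigcup\{\overline{W_j}:\dim W_j\le i\}$ and claim that for an $i$-dimensional piece $W_j$ the frontier $\di W_j$ ``lies in $S_{i-1}$ and is removed''. This does not follow from anything you have established: $\di W_j$ has dimension $<i$, but it is covered by the \emph{other pieces of the partition}, which may all have dimension $\ge i$, so it need not be contained in the closure of any piece of dimension $<i$. Concretely, take $X=\{z=0\}\cup\{z=x,\ x>0\}\subset\R^3$ with the partition $W_1=\{z=0\}$, $W_2=\{z=x,\ x>0\}$: both pieces are locally closed, definable, disjoint, and definably $\mathcal{C}^\infty$-diffeomorphic to $\R^2$, yet there are no pieces of dimension $\le 1$, so your recipe gives $S_1=\varnothing$ and $V_2=X$, which fails to be a manifold along the $y$-axis (the local model is a tripod times $\R$). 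Your argument only uses the properties ``finite partition into locally closed definable $\mathcal{C}^k$-manifolds'', and the example shows these properties alone do not yield the frontier containment you need; what you are implicitly assuming is the \emph{frontier condition} for a stratification, which is not part of Theorem \ref{o-min nice}(ii) or Remark \ref{smooth cell} as quoted, and which in any case would not survive your common-refinement-across-charts step.

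Your parenthetical fix --- ``absorb any such crossing locus into the next lower stratum by one further pass'' --- points in the right direction but is not sufficient as stated: sweeping bad loci down one level can create new bad loci at that level, so one pass is not enough; you need to iterate, i.e.\ induct on dimension. That is exactly how the paper proceeds: it takes a $\mathcal{C}^k$ cell decomposition, declares $V_r$ to be the union of the top-dimensional cells, puts \emph{everything else} into $S_{r-1}$ (a definable set of strictly smaller dimension), and applies the induction hypothesis to $S_{r-1}$. If you restructure your argument as such an induction on $\dim X$, the frontier issue disappears, since whatever goes wrong on the lower-dimensional locus is handled by the inductive call rather than by an unproved containment. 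A secondary point worth a sentence in a written-up version: since the transition functions of a definable space are only continuous, ``$\mathcal{C}^k$-manifold'' for a refined piece must be taken with respect to the chart produced by the cell decomposition itself, not simultaneously in all charts of the original atlas; your common-refinement step silently conflates these.
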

\begin{proof}
We argue by induction on $\dim X$. If $\dim X = 0$ there is nothing to prove. In general, from Theorem \ref{o-min nice}, \textit{(ii)} and Remark \ref{smooth cell} it follows that we can find a $\mathcal{C}^k$- (and $\mathcal{C}^{\infty}$-, if $\Sigma$ is of QA type) cellular decomposition
\[
X=X_1 \sqcup \ldots \sqcup X_p.
\]
Let $I=\{i_1, \ldots, i_q\}$ be the set of indices of cells of non-maximal dimension, that is  $\dim X_{i_j}<\dim X$ if and only if $i_j \in I$. Put $S_{r-1}:= \bigcup_{i\in I} X_i$. Then $V_r:=X \setminus S_{r-1}=\bigsqcup_{j \notin I} X_j$ is a union of non-interesecting definable $\mathcal{C}^k$- (respectively $\mathcal{C}^{\infty}$-) manifolds. At the same time, $\dim S_{r-1}< \dim X$, so the induction assumption applies to it.
\end{proof}

In this text, we are concerned with universal covers of various manifolds and spaces. Usually, the universal cover of a definable space does not possess any natural definable structure. An important exception is the case of a finite fundamental group.

Let  $X$ be a topological space homeomorphic to a finite CW-complex and $p \colon \widetilde{X} \to X$ be its universal cover. Let $f \colon Y \to X$ be a continuous map from a simply connected topological space $Y$. Recall that $f$ uniquely lifts to a continuous map $\widetilde{f} \colon Y \to \widetilde{X}$ such that $f=p \circ \widetilde{f}$.

\begin{lemma}\label{o-min cover}
Let $X$ be a topological space and $\xi$ a $\Sigma$-definable atlas on $X$ for an o-minimal structure $\Sigma$. Assume that $\pi_1(X)$ is finite. Then the following holds:
\begin{itemize}
\item[(i)] The universal cover $\widetilde{X}$ admits a unique $\Sigma$-definable atlas $\widetilde{\xi}$ such that the covering map $p \colon \widetilde{X} \to X$ is definable;
\item[(ii)] if $Y$ is a simply connected $\Sigma$-definable space and $f \colon Y \to X$ is continuous and $\Sigma$-definable, the canonical lifting $\widetilde{f} \colon Y \to \widetilde{X}$ is $\Sigma$-definable.
 \end{itemize}
\end{lemma}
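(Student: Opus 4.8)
The plan is to build the definable atlas $\widetilde{\xi}$ on $\widetilde{X}$ by transporting the given atlas $\xi$ of $X$ through the covering map. Since $\pi_1(X)$ is finite, the covering $p \colon \widetilde{X} \to X$ is a finite-sheeted covering, say of degree $d$. First I would fix a definable atlas $\xi = (U_\alpha, \phi_\alpha)_{\alpha \in A}$ on $X$ and refine it so that each $U_\alpha$ is evenly covered by $p$, i.e.\ $p^{-1}(U_\alpha) = V_\alpha^{(1)} \sqcup \ldots \sqcup V_\alpha^{(d)}$ with $p$ restricting to a homeomorphism on each sheet. Such a refinement exists because evenly covered open sets form a basis and, by Theorem \ref{o-min nice}, any definable open cover of a compact (hence definably compact) space can be refined to a finite definable open cover; one should check that a definable refinement subordinate to the evenly-covered cover can be chosen — e.g.\ by shrinking each $U_\alpha$ to a definable open subset using the cell decomposition. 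Then I set $\psi_{\alpha, i} := \phi_\alpha \circ p|_{V_\alpha^{(i)}} \colon V_\alpha^{(i)} \to \R^n$ and declare $\widetilde{\xi} := (V_\alpha^{(i)}, \psi_{\alpha, i})$ to be the atlas on $\widetilde{X}$.

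The next step is to verify that $\widetilde{\xi}$ is a $\Sigma$-definable atlas and that $p$ is definable with respect to it. Definability of $p$ is essentially built in: in the charts $\psi_{\alpha,i}$ on the source and $\phi_\alpha$ on the target, $p$ reads as the identity map of a definable subset of $\R^n$, which is definable. The transition functions of $\widetilde{\xi}$: on an overlap $V_\alpha^{(i)} \cap V_\beta^{(j)}$, the map $\psi_{\beta,j} \circ \psi_{\alpha,i}^{-1}$ equals $\phi_\beta \circ \phi_\alpha^{-1}$ restricted to the appropriate definable subset (the two sheets must project into the common region), so it is definable because the transitions of $\xi$ are. Condition (i) of Definition \ref{dif top space}, that $\psi_{\alpha,i}(V_\alpha^{(i)} \cap V_\beta^{(j)})$ be definable, follows likewise since this set is $\phi_\alpha(U_\alpha \cap U_\beta)$ intersected with the region where the sheet $i$ and sheet $j$ agree, and the latter is definable. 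Here one uses that the partition of $p^{-1}(U_\alpha \cap U_\beta)$ into sheets is definable, which can be arranged because $p^{-1}(U_\alpha)$ is definable with finitely many connected components (Theorem \ref{o-min nice}, (ii)) and each sheet is a union of such components.

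For uniqueness in (i): if $\widetilde{\xi}'$ is another definable atlas making $p$ definable, then on each sheet $V_\alpha^{(i)}$ the map $p$ is a definable homeomorphism onto the definable set $\phi_\alpha(U_\alpha)$, so $\widetilde{\xi}'$ and $\widetilde{\xi}$ have mutually definable transition maps on these common refinement charts; hence they define the same definable structure. For part (ii), given a simply connected definable $Y$ and a definable continuous $f \colon Y \to X$, the topological lift $\widetilde{f}$ exists and is unique by covering-space theory. To see it is definable, work locally: cover $Y$ by definable opens $W$ small enough that $f(W)$ lies in some evenly-covered $U_\alpha$ and $W$ is connected (possible by the cell decomposition, refining if necessary); then on $W$ the lift $\widetilde{f}|_W$ equals $(p|_{V_\alpha^{(i)}})^{-1} \circ f|_W$ for the unique sheet $i$ that $\widetilde{f}(W)$ meets, and this is a composition of definable maps. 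Since the graph of $\widetilde{f}$ is covered by finitely many definable pieces, it is definable. The main obstacle I anticipate is the bookkeeping needed to make the sheet decomposition $p^{-1}(U_\alpha) = \bigsqcup_i V_\alpha^{(i)}$ genuinely definable and compatibly indexed across overlaps — in other words, upgrading the purely topological even-covering data to definable data — rather than any deep idea; everything else is a routine check that the charts, transitions, and lifts one writes down are compositions of maps already known to be definable.
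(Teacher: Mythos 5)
Your construction is essentially the paper's proof: refine the atlas to a finite cover by definable opens over which $p$ splits (the paper takes simply connected definable charts, which are automatically evenly covered), declare the sheets to be charts with coordinates $\phi_\alpha\circ p$, and check transitions via connected components of definable sets. Two minor remarks: $X$ is not assumed compact, so your appeal to compactness for the refinement is unjustified (though unnecessary, since each of the finitely many definable charts already admits a finite cover by simply connected definable opens); and for (ii) the paper replaces your local sheet-by-sheet argument with the one-line observation that $\Gamma_{\widetilde f}$ sits inside $(\mathrm{id}\times p)^{-1}(\Gamma_f)$, which is definable.
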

\begin{proof}
Let $U_{\alpha} \in \xi$. By Theorem \ref{o-min nice}, \textit{(ii)} there exists a finite covering of $U_{\alpha}$ by open $\Sigma$-definable simply connected $V^{\alpha}_{\beta}, \ U_{\alpha} = \bigcup_{\beta} V^{\alpha}_{\beta}$. The cover $p \colon \widetilde{X} \to X$  splits over $V^{\alpha}_{\beta}$, that is, $p^{-1}(V^{\alpha}_{\beta})\simeq V^{\alpha}_{\beta} \times \{1, \ldots, k\}$. Here we use that $p^{-1}(V^{\alpha}_{\beta})$ has finitely many connected components since $\pi_1(X)$ is finite and $p$ is a finite cover. Each of these connected components is homeomorphic to $V^{\alpha}_{\beta}$. The  connected components of preimages of $V^{\alpha}_{\beta}$ for all $\alpha$ and $\beta$ give rise to  a definable atlas $\widetilde{\xi}$ on $\widetilde{X}$ with local coordinates $\phi_{\alpha} \circ p$. The projection $p$ is definable since in a finite collection of definable charts $V^{\alpha}_{\beta}$ it coincides with the projection $V^{\alpha}_{\beta} \times \{1, \ldots, k\} \to V^{\alpha}_{\beta}$.

To prove \textit{(ii)} notice that the graph $\Gamma_{\widetilde{f}} \subset Y \times \widetilde{X}$ is nothing but the preimage of the graph $\Gamma_f \subset Y \times X$ under the (definable) map $id \times p \colon Y \times \widetilde{X} \to Y \times X$. Therefore, $\widetilde{f}$ is definable whenever $f$ is.
\end{proof}

\subsection{o-minimal differential geometry} \label{o-min diff} In this subsection we transfer the main notions of differential geometry into the context of definable manifolds. These results are well-known to the experts, although folklore and, up to our knowledge, are not written in full generality anywhere in the literature.  We always work in the smooth category, although all the results remain valid in $\mathcal{C}^k$-category for $k\ge 2$.

\begin{df}
Let $M$ be a definable smooth manifold and $E \to M$ a smooth rank $d$ vector bundle. \emph{A definable structure on} $E$ is a finite definable cover $M= \bigcup U_{\alpha}$  (i.e., each $U_{\alpha}$ is open definable) and a collection of trivialisations $\psi_{\alpha} \colon E|_{U_{\alpha}} \xrightarrow{\sim} U_{\alpha} \times \R^d$, such that the transition functions $\psi_{\beta} \circ \psi_{\alpha}^{-1}$ are definable. We say that a  section $s \in \mathcal{C}^{\infty}(M, E)$ is definable if it is definable in the local definable charts. A definable vector bundle over a definable manifold is a vector bundle with a chosen definable structure.
\end{df}

A definable structure on a vector bundle $E$ naturally gives rise to a structure of a definable manifold on its total space such that the projection $p \colon \operatorname{Tot}(E) \to M$ is definable, and vice versa.  A section of a definable vector bundle $E \to M$ is definable if and only if it is definable as a map from $M$ to $\operatorname{Tot}(E)$. A trivial vector bundle has a canonical definable structure that comes from the product definable structure on its total space $M \times \R^d$.

A morphism $f \colon E \to F$ of definable vector bundles is definable if it sends definable sections of $E$ to definable sections of $F$.

The following proposition is straightforward and is left to the reader as an exercise. 
\begin{prop}\label{def vector bundles basics}
Let $M$ be a definable smooth manifold, $E \to M$ a definable vector bundle. Then:
\begin{itemize}
\item[(i)] $E^*, \ \Lambda^kE, \ \operatorname{Sym}^kE$ carry natural definable structures;
\item[(ii)] if $F$ is another definable vector bundle, then $E \o F, \ Hom(E, F), \ E \oplus F$ carry natural definable structures;
\item[(iii)] the trace morphism $\operatorname{tr} \colon E \o E^* \to \R_M$ is definable;
\item[(iv)] a morphism $f \colon F \to E$ is definable if and only if it is definable as a section of $Hom(E, F)$;
\item[(v)]a morphism $f \colon E \to F$ is definable if and only if it determines a definable map $\operatorname{Tot}(E) \to \operatorname{Tot}(F)$. If $\ker f$ (resp. $\coker f$) is a vector bundle, it carries a natural definable structure;
\item[(vi)] if $N$ is a definable manifold and $f \colon N \to M$ is a definable smooth map, then $f^*E$ carries a natural definable structure;
\end{itemize}
\end{prop}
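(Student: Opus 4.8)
The plan is to reduce every item to a single observation: each of the listed operations, when expressed in a suitably chosen \emph{common} definable trivialising cover, acts on transition cocycles by post-composition with a fixed semi-algebraic map between (products of) copies of $\GL_d(\R)$ and $\operatorname{Mat}_{d\times d}(\R)$. Since $\Sigma$ expands $\R_{\an}$, hence $\R_{\alg}$, every semi-algebraic map is $\Sigma$-definable, and definability is preserved by composition and restriction; so the new cocycle is definable and the corresponding bundle (or morphism) is definable. The one preliminary step is to note that, given finitely many definable vector bundles on $M$, one can choose a common finite definable trivialising cover: intersecting the given covers and then refining by a finite cell decomposition (Theorem \ref{o-min nice}(ii)) keeps the cover finite and definable, and the restricted trivialisations remain definable. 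After that, items (i), (ii), (iii) and (vi) are immediate.

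For (i)--(iii) I would just write down the cocycles. If $E$ has transition functions $g_{\alpha\beta}$ and $F$ has transition functions $h_{\alpha\beta}$ over a common cover, then $E^*$ has transition functions $x \mapsto (g_{\alpha\beta}(x)^{-1})^{T}$, and $A \mapsto (A^{-1})^{T}$ is semi-algebraic by Cramer's rule; $\Lambda^k E$ and $\operatorname{Sym}^k E$ have transition functions $\Lambda^k g_{\alpha\beta}$, $\operatorname{Sym}^k g_{\alpha\beta}$, whose entries are polynomials (minors, resp.\ degree-$k$ monomials) in the entries of $g_{\alpha\beta}$; $E \o F$, $\Hom(E,F) \cong E^* \o F$ and $E \oplus F$ have transition functions the Kronecker product $g_{\alpha\beta} \o h_{\alpha\beta}$, its twist $(g_{\alpha\beta}^{-1})^{T} \o h_{\alpha\beta}$, and the block-diagonal matrix built from $g_{\alpha\beta}, h_{\alpha\beta}$ --- all polynomial in the entries. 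For (iii), in a definable trivialisation $E \o E^*|_{U_\alpha} \cong U_\alpha \times \operatorname{Mat}_{d\times d}(\R)$ the trace is the linear, hence semi-algebraic, map $A \mapsto \sum_i A_{ii}$, which is independent of the trivialisation and so glues to a definable morphism $\tr \colon E \o E^* \to \R_M$. For (vi), $f^*E$ has transition functions $g_{\alpha\beta} \circ f$, a composition of definable maps; equivalently, $\operatorname{Tot}(f^*E) = N \times_M \operatorname{Tot}(E)$ is a definable fibre product inside $N \times \operatorname{Tot}(E)$.

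Items (iv) and (v) are a matter of unwinding definitions. Over each member of a common definable trivialising cover a morphism is given by a matrix-valued function $M_\alpha$, and this same $M_\alpha$ is simultaneously the local expression of the corresponding section of the $\Hom$ bundle and of the total-space map $(x,v) \mapsto (x, M_\alpha(x)v)$. If $M_\alpha$ is definable, then applied to any definable section $s$ it yields the definable function $x \mapsto M_\alpha(x)s(x)$, and the total-space map is definable since it is assembled from $M_\alpha$ and the semi-algebraic evaluation pairing; conversely, feeding $f$ the definable constant-frame sections $e_1, \dots, e_d$ over $U_\alpha$ returns the columns of $M_\alpha$, and restricting the total-space map to $v = e_i$ does likewise. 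This gives (iv) and the equivalence in the first half of (v).

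The only step I expect to require more than a cocycle computation is the definability of $\ker f$ and $\coker f$ in (v), i.e.\ producing definable local frames for a definably varying family of subspaces. Suppose $f \colon E \to F$ is definable and $\ker f$ is a subbundle; its rank is locally constant and, being a definable function on a definable space, constant on each chart after a refinement, while $\operatorname{Tot}(\ker f) = \{(x,v) \in \operatorname{Tot}(E) : M_\alpha(x)v = 0\}$ is definable. For each $r$-element subset $I \subseteq \{1, \dots, d\}$ the set of $x \in U_\alpha$ for which the coordinate projection $\R^d \to \R^I$ restricts to an isomorphism on $(\ker f)_x$ is definable; these sets cover $U_\alpha$ and there are finitely many, so after a further refinement we may assume one fixed $I$ works on the whole chart, and then $x \mapsto$ (the unique $v \in (\ker f)_x$ with prescribed $I$-coordinates) is a definable frame, which provides the desired definable structure. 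For $\coker f = F / \im f$ the argument is dual: $\operatorname{Tot}(\im f) = f(\operatorname{Tot}(E))$ is definable, so $\im f$, when a subbundle, is definable by the same trick, and over a refinement on which a fixed coordinate subspace is a complement to $\im f_x$ in $F_x$ that complement trivialises $\coker f$ definably. Everything here rests on the finite cell decomposition of Theorem \ref{o-min nice}(ii), which I regard as the crux of this last point.
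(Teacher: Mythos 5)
The paper offers no argument here --- the proposition is explicitly ``left to the reader as an exercise'' --- so there is nothing to compare against except the intended routine verification, and your write-up is exactly that verification, carried out correctly: a common finite definable trivialising cover, transition cocycles transformed by fixed semi-algebraic matrix operations (hence $\Sigma$-definable since $\Sigma$ expands $\R_{\alg}$), and local matrix expressions for morphisms. You also correctly isolated and handled the one step that is not a pure cocycle computation, namely producing definable local frames for $\ker f$ and $\coker f$ via coordinate-subspace projections and a finite definable refinement. The only point worth flagging is in (iv)--(v): if ``sends definable sections to definable sections'' is read as \emph{global} sections, recovering the columns of $M_\alpha$ from the frame $e_1,\dots,e_d$ over $U_\alpha$ needs an extension or cut-off argument; reading the definition with local definable sections over definable opens (which is evidently what the paper intends) your argument goes through as written.
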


\begin{prop}\label{definable tangent}
Let $M$ be a definable manifold. Let $(U_{\alpha}, \phi_{\alpha})$ be a definable atlas on $M$. Then $$d\phi_{\alpha} \colon TM|_{U_{\alpha}} \to T\phi_{\alpha}(U_{\alpha})=\phi_{\alpha}(U_{\alpha}) \times \R^n$$ defines a definable structure on $TX$, called \emph{the canonical definable structure};
\end{prop}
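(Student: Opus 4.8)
The plan is to verify that the transition data for $TM$ induced by a definable atlas satisfies the axioms of a definable vector bundle structure, and that the resulting structure is independent of the chosen atlas up to definable isomorphism. Concretely, given a definable atlas $(U_\alpha, \phi_\alpha)$ on $M$ with $\phi_\alpha \colon U_\alpha \to \R^n$, the maps $d\phi_\alpha \colon TM|_{U_\alpha} \to \phi_\alpha(U_\alpha) \times \R^n$ are the candidate trivialisations. The open cover $M = \bigcup U_\alpha$ is definable by hypothesis, so the only thing to check is that the transition functions $d\phi_\beta \circ (d\phi_\alpha)^{-1}$ are definable maps on $\phi_\alpha(U_\alpha \cap U_\beta) \times \R^n$.

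First I would compute this transition function explicitly: on $\phi_\alpha(U_\alpha \cap U_\beta)$, write $\tau_{\beta\alpha} := \phi_\beta \circ \phi_\alpha^{-1}$, which is definable by part (ii) of Definition \ref{dif top space} and smooth since $M$ is a definable smooth manifold. Then the transition map for the tangent bundle is $(x, v) \mapsto (\tau_{\beta\alpha}(x), D\tau_{\beta\alpha}(x) \cdot v)$, where $D\tau_{\beta\alpha}(x)$ is the Jacobian matrix. So the claim reduces to showing that the map $x \mapsto D\tau_{\beta\alpha}(x)$, from the definable open set $\phi_\alpha(U_\alpha \cap U_\beta) \subseteq \R^n$ to $\operatorname{Mat}_{n\times n}(\R)$, is definable. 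This is the key point: the derivative of a definable map is definable. For functions of one variable this is exactly Theorem \ref{o-min nice}, item (iv); for the partial derivatives $\partial_j (\tau_{\beta\alpha})_i$ one restricts to coordinate lines and applies (iv), using that the restriction of a definable map to a definable affine line is definable and that partial derivatives coincide with these directional derivatives. Since the entries of $D\tau_{\beta\alpha}$ are thus definable functions of $x$, the full transition map $(x,v) \mapsto (\tau_{\beta\alpha}(x), D\tau_{\beta\alpha}(x)v)$ is definable (matrix-vector multiplication is a polynomial, hence semi-algebraic, operation composed with definable maps). This establishes that $(d\phi_\alpha)$ forms a definable structure on $TM$.

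Next I would address well-definedness: if $(U'_\gamma, \phi'_\gamma)$ is an equivalent definable atlas, then the identity map $TM \to TM$ is definable with respect to the two resulting structures, because in overlapping charts it is again given by a Jacobian $D(\phi'_\gamma \circ \phi_\alpha^{-1})$, which is definable by the same argument. Hence the canonical definable structure depends only on the definable-manifold structure of $M$, not on the atlas, justifying the name. It is also worth remarking that the zero section and the fibrewise vector space operations are definable, so this matches the notion of definable vector bundle introduced above, and by Proposition \ref{def vector bundles basics} one then gets canonical definable structures on $T^*M$, $\Lambda^k T^*M$, $\operatorname{Sym}^k T^*M$, etc.

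The main obstacle is entirely the differentiability step — making sure that ``derivative of a definable map is definable'' is legitimately available in the several-variable setting from the one-variable statement in Theorem \ref{o-min nice}(iv). This is routine (reduce to affine lines, note that definability is preserved under composition with the affine inclusion $t \mapsto x_0 + t e_j$, and that the resulting one-variable function's derivative is the partial derivative), but it is the only place where anything beyond bookkeeping happens; everything else is verifying that composites and products of definable maps are definable, which is immediate from the structure axioms and the semi-algebraicity of polynomial maps.
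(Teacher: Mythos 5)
Your proof is correct and follows essentially the same route as the paper's own (much terser) argument: trivialise $TM|_{U_\alpha}$ by the coordinate frames $d\phi_\alpha$, identify the transition functions with the Jacobians of the chart transitions, and invoke the definability of derivatives of definable functions. The only point worth flagging is that your reduction of partial derivatives to the one-variable statement of Theorem \ref{o-min nice}\textit{(iv)} by restricting to affine lines gives definability in the line parameter for each fixed basepoint, whereas joint definability in $x$ is most cleanly obtained from the first-order (quantifier) description of the difference-quotient limit as in \cite{vdD}, Chapter 7 --- but this is exactly the level of routine detail the paper itself elides.
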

\begin{proof}
Without loss of generality, we may assume that $\phi_{\alpha}$ are all open embeddings. Then $d\phi_{\alpha}^{-1}(\di/\di x_i)$ defines  trivialisations of $TM|_{U_{\alpha}}$. The rest of the Proposition follows from the fact that the derivatives of definable smooth functions are definable and computations in the local coordinates.
\end{proof}

Using Proposition \ref{def vector bundles basics} one can canonically extend the definable structure on $TM$ to any tensor bundle $(TM)^{\o p} \o (T^*M)^{\o q}$. Without specifying we say that a tensor on a definable manifold is definable if it is definable in the canonical definable structure. In other words, if $E:=(TM)^{\o p} \o (T^*M)^{\o q}$, a tensor $\tau \in \mathcal{C}^{\infty}(M, E)$ is definable if and only if there exists a definable finite open covering $\{U_{\alpha}\}$ on $M$ such that for every $\alpha$ the components of $\tau|_{U_{\alpha}}$ with respect to a definable trivialisation of $E|_{U_{\alpha}}$ can be written as definable functions $\tau^{i_1, \ldots, i_p}_{j_1, \ldots, j_q} \colon U_{\alpha} \to \R$.

All the notions of differential geometry easily transfer to the definable context. For example, one says that a Riemannian metric $g$ on a definable manifold $M$ is definable if $g$ is definable as a section of $\operatorname{Sym}^{2}T^{*}M$. 

We say that a connection $\nabla$ on $E$ is definable if, on a finite covering by definable open charts $U_{\alpha}$ in which both $E$ and $TM$  can be simultaneously definably trivialised, $\nabla$ can be written as $\nabla = d+P_{\alpha}$ for definable local sections $P_{\alpha}$ of $End(E) \o T^*M$. 

\begin{prop}\label{def connection}
Let $M$ be a smooth definable manifold and $E \to M$ a definable smooth vector bundle.
\begin{itemize}
\item[(i)] if $\nabla$ is a definable connection on $E$, then for every definable section $s \in \mathcal{C}^{\infty}(M, E)$ the section $\nabla s \in \mathcal{C}^{\infty}(M , E \o T^{*}M)$ is definable;
\item[(ii)] if $g$ is a definable metric on $M$, its Levi-Civita connection is definable. 
\end{itemize}
\end{prop}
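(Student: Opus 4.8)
The plan is to verify both statements by a purely local computation in definable charts, using that the structure constants of the Levi-Civita connection are rational expressions in the components of the metric and their first derivatives, and that derivatives of definable functions are definable by Theorem \ref{o-min nice}, \textit{(iv)}.

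For part \textit{(i)}, fix a finite definable open cover $\{U_\alpha\}$ over which $E$ and $TM$ are simultaneously definably trivialised and on which $\nabla = d + P_\alpha$ with $P_\alpha$ a definable section of $\operatorname{End}(E) \o T^*M$. If $s$ is a definable section of $E$, then in the chart $U_\alpha$ the components $s^a \colon U_\alpha \to \R$ are definable functions; their partial derivatives $\di_i s^a$ exist (as $s$ is smooth) and are definable by Theorem \ref{o-min nice}, \textit{(iv)} applied in the coordinate directions, so $ds$ is a definable section of $E \o T^*M$. Since $P_\alpha$ is definable and the contraction $\operatorname{End}(E) \o T^*M \o E \to E \o T^*M$, $(P, s) \mapsto P(s)$, is a definable bundle morphism by Proposition \ref{def vector bundles basics}, the section $P_\alpha(s)$ is definable. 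Hence $\nabla s|_{U_\alpha} = ds|_{U_\alpha} + P_\alpha(s)$ is definable on each $U_\alpha$, and since the $U_\alpha$ form a finite definable cover, $\nabla s$ is definable.

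For part \textit{(ii)}, work in a definable atlas $(U_\alpha, \phi_\alpha)$ where (after shrinking and by Proposition \ref{definable tangent}) $TM|_{U_\alpha}$ carries its canonical definable trivialisation by the coordinate frame $\di/\di x_i$, and the metric has definable components $g_{ij} \colon U_\alpha \to \R$. The inverse matrix $(g^{ij})$ has entries that are rational functions of the $g_{ij}$ and hence definable (semi-algebraic operations preserve definability). The Christoffel symbols
\[
\Gamma_{ij}^k = \tfrac12\, g^{kl}\bigl(\di_i g_{jl} + \di_j g_{il} - \di_l g_{ij}\bigr)
\]
are therefore definable functions on $U_\alpha$, since each $\di_m g_{ij}$ is definable by Theorem \ref{o-min nice}, \textit{(iv)} and the $g^{kl}$ are definable. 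This exhibits the Levi-Civita connection locally as $d + P_\alpha$ with $P_\alpha = (\Gamma_{ij}^k)$ a definable section of $\operatorname{End}(TM) \o T^*M$, which is exactly the definition of a definable connection; the transition compatibility across charts is automatic since $\nabla$ is globally defined.

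The computation is entirely routine; the only point requiring care is the passage from "$f$ is definable on an open subset of $\R^n$ and smooth" to "its partial derivatives are definable". This is not literally the one-variable statement in Theorem \ref{o-min nice}, \textit{(iv)}, but follows from it by freezing all but one coordinate: for fixed $(x_1, \dots, \hat{x}_i, \dots, x_n)$ the slice function is definable in $x_i$ with a definable derivative, and definability of the resulting function of all $n$ variables follows because the graph of $\di_i f$ is a definable set, being cut out inside the (definable) total space of differentiability by definable conditions — alternatively one invokes the standard fact, recorded in \cite{vdD}, Chapter 7, that $\mathcal{C}^1$ definable maps have definable derivatives. Thus this is the only step where one must be slightly attentive rather than merely formal.
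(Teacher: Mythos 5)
Your proposal is correct and follows essentially the same route as the paper: a local computation in definable trivialising charts, writing $\nabla = d + P_\alpha$ for part \emph{(i)} and expressing the Christoffel symbols as rational combinations of the $g_{ij}$ and their partial derivatives for part \emph{(ii)}, with definability of derivatives supplied by Theorem \ref{o-min nice}, \textit{(iv)}. Your closing remark correctly flags the only point the paper glosses over — that Theorem \ref{o-min nice}, \textit{(iv)} is stated for one variable, so one needs the multivariable version of definability of partial derivatives from \cite{vdD}, Chapter 7 — but this does not change the argument.
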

\begin{proof}
\textit{(i)}. Let $s_{\alpha}=s|_{U_{\alpha}}$. Then $\nabla(s_{\alpha})=ds_{\alpha}+P_{\alpha}s_{\alpha}$. Both summands are definable: the second one because of the definability of $P_{\alpha}$, and the first one because the derivative of a definable function is definable (Theorem \ref{o-min nice}, \textit{(iv)}) and $ds_{\alpha}$ can be expressed via partial derivatives of the components of $s_{\alpha}$ with respect to local definable trivialisations.  
\\

\textit{(ii)}. Choose a definable chart $U$ with definable coordinates $(x^1, \ldots, x^n)$. On this chart, $\nabla$ can be written as $d+\Gamma$, where the elements of the tensor $\Gamma$ are Christoffel symbols $\Gamma_{i,j}^k$. Those are expressed in terms of the partial derivatives of $g$ along the basis vector fields: 
\[
\Gamma_{i,j}^k = \frac{1}{2}g^{kl} \left( \frac{\di g_{li}}{\di x^j} + \frac{\di g_{lj}}{\di x^i} - \frac{\di g_{ij}}{\di x^l} \right ).
\]
The functions $g^{a,b}$ are definable with respect to $x^c$ and a derivation of a definable function is definable. Hence $\nabla$ is definable. 

\end{proof}

\begin{df}\label{Weyl inv}
Let $(M, g)$ be a Riemannian manifold. We define the class of \emph{Weyl invariants} among the tensors on $M$ as the following:
\begin{itemize}
\item $g$ is a Weyl invariant;
\item the curvature tensor $R_g$ is a Weyl invariant;
\item if $\tau$ is a Weyl invariant and $\nabla$ is the Levi-Civita connection, $\nabla \tau$ is a Weyl invariant;
\item a linear combination of Weyl invariants is a Weyl invariant;
\item if $\tau \in (TM)^{\o p} \o (T^*M)^{\o q}$ is a Weyl invariant, then its image under any tensorial convolution $$(TM)^{\o p} \o (T^*M)^{\o q} \to (TM)^{\o (p-k)} \o (T^*M)^{\o (q-k)}$$  is a Weyl invariant.
\end{itemize}
\end{df}

Typical examples of Weyl invariants are the Ricci curvature, the Weyl tensor,  the scalar curvature, iterated covariant derivatives of the Riemannian curvature $\nabla^{\circ s}R_g$ et caetera. Combining  Propositions \ref{def vector bundles basics} and \ref{def connection} we get the following result:

\begin{cor}\label{weyl def}
Let $M$ be a definable smooth manifold and $g$ a definable Riemannian metric on it. Then every Weyl invariant on $(M, g)$ is definable.
\end{cor}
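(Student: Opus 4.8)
The plan is to argue by structural induction on Definition \ref{Weyl inv}, verifying that each of the five clauses that produce Weyl invariants preserves definability. First I would fix a finite definable atlas on $M$ and recall that, by Proposition \ref{definable tangent} together with Proposition \ref{def vector bundles basics}, every tensor bundle $E_{p,q}:=(TM)^{\o p}\o(T^*M)^{\o q}$ carries a canonical definable structure, with respect to which ``definable tensor'' has the coordinate meaning spelled out after Proposition \ref{definable tangent} (components being definable functions in any definable trivialisation).

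For the base cases: $g$ is definable by hypothesis. For the curvature tensor $R_g$, note that by Proposition \ref{def connection}\textit{(ii)} the Levi-Civita connection $\nabla$ is definable, so in a definable chart with coordinates $(x^1,\dots,x^n)$ its connection form is $\Gamma=(\Gamma^k_{ij})$ with the $\Gamma^k_{ij}$ definable functions; the components $R^l_{ijk}$ of $R_g$ are universal polynomials with integer coefficients in the $\Gamma^k_{ij}$ and their first partial derivatives, and derivatives of definable functions are definable by Theorem \ref{o-min nice}\textit{(iv)}. Hence $R_g$ is a definable section of $E_{1,3}$.

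For the inductive steps I would first observe that the Levi-Civita connection induces a definable connection on every tensor bundle $E_{p,q}$: its local connection form is obtained from $\Gamma$ by the Leibniz rule, hence is again a polynomial with integer coefficients in the $\Gamma^k_{ij}$, and so is a definable local section of $\Hom(E_{p,q},E_{p,q})\o T^*M$. Consequently, if a Weyl invariant $\tau\in\mathcal{C}^{\infty}(M,E_{p,q})$ is definable, then Proposition \ref{def connection}\textit{(i)} applied to this induced connection shows that $\nabla\tau\in\mathcal{C}^{\infty}(M,E_{p,q}\o T^*M)=\mathcal{C}^{\infty}(M,E_{p,q+1})$ is definable. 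A linear combination of definable sections of a fixed definable bundle is definable, since in a definable trivialisation it is the corresponding linear combination of definable functions. Finally, every tensorial convolution $E_{p,q}\to E_{p-k,q-k}$ factors as a constant $\R$-linear (hence definable) permutation of tensor factors followed by the $k$-fold application of the trace morphism $\tr\colon TM\o T^*M\to\R_M$ tensored with the identity on the remaining factors; by Proposition \ref{def vector bundles basics}\textit{(iii)} and \textit{(ii)} this is a definable bundle morphism, and a definable morphism sends definable sections to definable sections by definition.

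Since every Weyl invariant is produced from $g$ and $R_g$ by finitely many applications of these operations, it follows that every Weyl invariant on $(M,g)$ is definable. I do not expect a serious obstacle here: the argument is a bookkeeping induction, and the only point needing a word of justification beyond the cited results is the definability of the induced Levi-Civita connection on the higher tensor bundles, which is immediate from the Leibniz formula for its connection form.
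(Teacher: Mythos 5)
Your proof is correct and follows essentially the same route as the paper, which simply states that the corollary follows by combining Propositions \ref{def vector bundles basics} and \ref{def connection}; your write-up is just the explicit structural induction that this combination implicitly requires (base cases $g$ and $R_g$, then closure of definability under covariant derivative, linear combination, and contraction). The only detail worth flagging is that you invoke Theorem \ref{o-min nice}\textit{(iv)} for partial derivatives of multivariate definable functions, whereas it is stated for functions of one variable; this is harmless, since the paper itself uses the multivariate version in the proof of Proposition \ref{def connection}\textit{(ii)}.
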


\subsection{Additive Weyl invariants and cohomogeneity}\label{Singer part}

Recall that a Riemannian manifold $(M, g)$ is called locally homogeneous if for any two points $x, y \in M$ there exist open neighbourhoods $U, \ V$ of $x$ and $y$ respectively such that $(U, g|_U)$ and $(V, g|_V)$ admit a smooth isometry. Equivalently, this means that for every point $x \in M$ there is an open neighbourhood $U$ of $x$ and a collection of Killing vector fields $\xi_1, \ldots, \xi_n \in \mathcal{C}^{\infty}(U, TM|_U)$ that  generate $TM|_U$ as a $\mathcal{C}^{\infty}(U)$-module. A locally homogeneous Riemannian manifold is homogeneous, that is, admits a transitive Lie group action by isometries, if it is complete and simply connected. 

Let $(M, g)$ be a Riemannian manifold. Let $\operatorname{Fr}_g(M)$ be the bundle of orthonormal frames on $M$. This is a principal $\operatorname{O}(TM)$-bundle endowed with a projection $p \colon \operatorname{Fr}_g(M) \to M$ and $p^*TM$ canonically trivialises. In particular, every Weyl invariant $\tau \in (TM)^{\o p} \o (T^*M)^{\o q}$ defines a vector-valued function  $p^*\tau \colon \operatorname{Fr}_g \to \R^{n^2pq}$, where $n=\dim M$. In 1960 Singer proved (\cite{Sing}) that there exists a constant $k$ depending only on the dimension of $M$, such that $(M,g)$ is locally homogeneous if and only if the maps $p^*(\nabla^sR_g)$ are constant for every $s, 0 \le s \le k$. Later, Pr\"ufer, Tricerri and Vanhecke reduced Singer's criterion to the following elegant form (\cite{PTV}, Theorem 2.3):

\begin{thrm}[Pr\"ufer-Tricerri-Vanhecke]\label{PTV thrm}
Let $(M,g)$ be a Riemannian manifold. There exists a constant $k$ depending only on $\dim M$ and a collection of smooth functions $$w_i \colon M \to \R, \ i = 1, \ldots, k,$$ such that $(M,g)$ is locally homogeneous if and only if each $w_i$ is constant. Moreover, every $w_i$ is a Weyl invariant in the sense of definition \ref{Weyl inv}.
\end{thrm}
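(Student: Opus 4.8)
The plan is to deduce Theorem \ref{PTV thrm} from Singer's infinitesimal homogeneity criterion \cite{Sing} together with the classical invariant theory of the orthogonal group; the functions $w_i$ will simply be a generating set for the ring of $\Ogr(n)$-invariants of the curvature jet. Set $n := \dim M$ and assume $M$ connected (otherwise ``constant'' is read componentwise). Let $p \colon \operatorname{Fr}_g(M) \to M$ be the orthonormal frame bundle, a principal $\Ogr(n)$-bundle on which $p^{*}TM$ is canonically trivialised. Via this trivialisation each Weyl invariant $\nabla^{s}R_g$ becomes an $\Ogr(n)$-equivariant map $p^{*}(\nabla^{s}R_g) \colon \operatorname{Fr}_g(M) \to W_s$, where $W_s \subset (\R^n)^{\o(s+4)}$ is the finite-dimensional ``model space'' of formal $s$-th covariant derivatives of a curvature tensor, a real $\Ogr(n)$-representation depending only on $n$. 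Let $\kappa = \kappa(n)$ be the Singer number, put $V := W_0 \oplus \cdots \oplus W_{\kappa}$, and let $F := (p^{*}R_g, p^{*}\nabla R_g, \ldots, p^{*}\nabla^{\kappa}R_g) \colon \operatorname{Fr}_g(M) \to V$ be the resulting $\Ogr(n)$-equivariant map. Singer's theorem asserts that $(M,g)$ is locally homogeneous if and only if it is \emph{curvature homogeneous of order $\kappa$}, i.e. the order-$\kappa$ curvature jets at any two points are intertwined by a linear isometry of the tangent spaces; this is precisely the statement that $F(\operatorname{Fr}_g(M))$ is a single $\Ogr(n)$-orbit, equivalently that the orbit map $M \to V/\Ogr(n)$ is constant.

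The next step converts ``single orbit'' into ``certain scalar functions are constant''. Since $\Ogr(n)$ is compact (a fortiori reductive), the ring $\R[V]^{\Ogr(n)}$ of polynomial invariants is finitely generated (Hilbert), say by $P_1, \ldots, P_k$, where $k$ and the $P_j$ depend only on $n$. Because $\Ogr(n)$ is compact and acts continuously on the finite-dimensional $V$, every orbit is the continuous image of $\Ogr(n)$, hence compact and closed, and invariant polynomials separate orbits: given $v \notin \Ogr(n)\cdot w$, separate the two disjoint compact orbits by a continuous function, average it over $\Ogr(n)$ against Haar measure to make it invariant, approximate the result uniformly on a large $\Ogr(n)$-invariant ball by a polynomial, and average that polynomial over $\Ogr(n)$ — averaging a polynomial under a linear compact-group action again yields a polynomial and does not worsen the approximation on an invariant set, so one obtains an invariant polynomial distinguishing $v$ from $w$. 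Hence $F(\operatorname{Fr}_g(M))$ is a single orbit if and only if each $P_j \circ F$ is constant on $\operatorname{Fr}_g(M)$. Now $P_j \circ F$ is $\Ogr(n)$-invariant, hence constant along the fibres of $p$, so it descends (using local smooth sections of $p$) to a smooth function $w_j \colon M \to \R$, characterised by: $w_j(x)$ is the value of $P_j$ on the tuple $(R_g, \nabla R_g, \ldots, \nabla^{\kappa}R_g)$ read off in any orthonormal frame at $x$ (the answer being independent of the choice by invariance of $P_j$). Combining the two equivalences gives exactly the assertion: $(M,g)$ is locally homogeneous $\iff$ each of $w_1, \ldots, w_k$ is constant, with $k = k(n)$.

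Finally, each $w_j$ is a Weyl invariant. By the First Fundamental Theorem of invariant theory for the orthogonal group, every $\Ogr(n)$-invariant polynomial in a tuple of tensors is a polynomial in their \emph{complete contractions} — tensor products of copies of the arguments with all remaining indices contracted in pairs by the metric. Applied to the arguments $R_g, \nabla R_g, \ldots, \nabla^{\kappa}R_g$, this shows $w_j$ is a linear combination of complete metric contractions of tensor powers $\nabla^{s_1}R_g \o \cdots \o \nabla^{s_r}R_g$ with all $s_i \le \kappa$. Each such expression is built from $g$ and $R_g$ by iterated covariant differentiation, tensor multiplication, linear combination and tensorial convolution, hence is a Weyl invariant in the sense of Definition \ref{Weyl inv} (where tensor multiplication is understood to be among the permitted operations); therefore so is each $w_j$.

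The main obstacle is not in the above packaging but in the single-orbit criterion itself: the genuinely deep point — which one should quote rather than reprove — is Singer's theorem, and within it the existence of a \emph{uniform} bound $\kappa(n)$, depending on the dimension alone, past which curvature homogeneity forces local homogeneity. On the invariant-theoretic side the one delicate point is the orbit-separation step, which relies essentially on the \emph{compactness} of $\Ogr(n)$ — it fails for non-compact reductive groups, whose orbits need not be closed — so the argument does not formally reduce to the complexified picture.
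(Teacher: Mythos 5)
The paper does not actually prove Theorem \ref{PTV thrm}: it imports it as a black box, citing \cite{PTV} (Theorem 2.3) with \cite{Sing} and \cite{CO} quoted alongside. Your argument is therefore not competing with a proof in the text but reconstructing the one that Pr\"ufer, Tricerri and Vanhecke give, and it is essentially correct and follows the same strategy as the source: Singer's uniform infinitesimal-homogeneity criterion reduces local homogeneity to the statement that the $\Ogr(n)$-equivariant curvature-jet map $F$ on the orthonormal frame bundle has a single orbit as image; compactness of $\Ogr(n)$ guarantees both that $\R[V]^{\Ogr(n)}$ is finitely generated and that its elements separate orbits, so ``single orbit'' is equivalent to constancy of the $k$ descended scalar functions $w_j$; and Weyl's First Fundamental Theorem for $\Ogr(n)$ identifies these scalars with linear combinations of complete contractions $\tr(\nabla^{s_1}R_g \o \cdots \o \nabla^{s_r}R_g)$, which is exactly the form the paper records immediately after the theorem statement. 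Two small points deserve attention. First, as you yourself flag, Definition \ref{Weyl inv} is not literally closed under tensor products of Weyl invariants, so the final clause of the theorem requires that definition to be read as permitting tensor multiplication; this is harmless for the paper's purposes, since definability is preserved under tensor products and Corollary \ref{weyl def} goes through unchanged. Second, your $\kappa(n)$ (the Singer order of differentiation) and the theorem's $k$ (the number of generating invariants) are distinct constants, both depending only on $n$; you keep them apart correctly, but a careless reader could conflate them with the $k=\tfrac{n(n-1)}{2}$ mentioned in the paper. The only genuinely deep input is Singer's uniform bound $\kappa(n)$, which you appropriately quote rather than reprove.
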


Local homogeneity of $(M, g)$  in Theorem \ref{PTV thrm} can be replaced by global, provided that $(M, g)$ is complete and simply connected.

The authors in \cite{PTV} call the functions $w_i$ \emph{additive Weyl invariants}. In fact they all are of the form $\tr(\nabla^{\circ s_1}R_g \o \ldots \nabla^{\circ s_p}R_g)$. If $\dim M = n$, one can take $k=\frac{n(n-1)}{2}$ in both Singer's and Pr\"ufer-Tricerri-Vanhecke theorems. It is natural to gather all additive Weyl invariants into a single map $W_M=(w_1, \ldots, w_k) \colon M \to \R^k$. The meaning of this map was explained by Console and Olmos in \cite{CO}.

\begin{thrm}[Console-Olmos, \cite{CO}]\label{co theorem}
Let $(M,g)$ be a Riemannian manifold and let $$W_M=(w_1, \ldots, w_k) \colon M \to \R^k$$ be as above. Then the following holds:
\begin{itemize}
\item[(i)] every Killing vector field on $(M, g)$ is tangent to the level sets of $W_M$;
\item[(ii)] if $N \subseteq M$ is a non-critical level set of $W_M$, then $(N, g|_N)$ is a locally  homogeneous Riemann manifold;
\item[(iii)] for every $W_M$-non-critical point $x \in M$ there exists a neighbourhood $U$ of $x$ and a collection of Killing vector fields $\xi_1, \ldots, \xi_r \in \mathcal{C}^{\infty}(U, TM|_U)$  that generate $\operatorname{Ker} dW_M$ as an $\mathcal{C}^{\infty}(U)$-module.
\end{itemize}
\end{thrm}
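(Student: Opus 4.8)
The strategy is: dispatch (i) by naturality; prove the strong form (iii) on the orthonormal frame bundle, using invariant theory for $\operatorname{O}(n)$ together with Singer's prolongation/integrability theorem for Killing fields; and then derive (ii) from (iii).

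\textbf{(i).} Each $w_i$ is a Weyl invariant, hence a scalar that is natural in the metric: for a (local) isometry $\phi$ of $(M,g)$ one has $\phi^* w_i = w_i$, since $w_i$ is obtained from $g$ by covariant differentiation and tensor contraction — operations that commute with pullback — and $\phi^*g = g$. Applying this to the local flow $\phi_t$ of a Killing field $\xi$ and differentiating at $t = 0$ gives $\xi(w_i) = \mathcal{L}_\xi w_i = 0$ for all $i$, i.e. $dW_M(\xi) \equiv 0$; equivalently, $\mathcal{L}_\xi g = 0$ forces $\mathcal{L}_\xi(\nabla^s R_g) = 0$ for every $s$, so $\mathcal{L}_\xi$ annihilates every contraction of products of these tensors. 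Thus $\xi$ is tangent to the level sets of $W_M$.

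\textbf{(iii).} Let $p \colon \operatorname{Fr}_g(M) \to M$ be the orthonormal frame bundle and, with $k$ the constant of Theorem \ref{PTV thrm}, form the $\operatorname{O}(n)$-equivariant ``curvature jet'' map $\kappa \colon \operatorname{Fr}_g(M) \to \mathcal{V} := \bigoplus_{s=0}^{k}\mathcal{R}^{(s)}$ recording $R_g, \nabla R_g, \dots, \nabla^k R_g$ in a frame. Since $\operatorname{O}(n)$ is compact its orbits in $\mathcal{V}$ are closed and are separated by polynomial invariants, and the additive Weyl invariants $w_i$ — full contractions of products of the $\nabla^s R_g$ — generate enough of the invariant ring (by the first fundamental theorem for $\operatorname{O}(n)$, as exploited in \cite{PTV}) that, for $x, y \in M$, one has $W_M(x) = W_M(y)$ if and only if there is a linear isometry $F \colon T_xM \to T_yM$ with $F^*(\nabla^s R_g)_y = (\nabla^s R_g)_x$ for all $s \le k$. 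Fix a non-critical point $x_0$; near it $\rho := \operatorname{rk} dW_M$ is constant, the level set $N$ through $x_0$ is a submanifold of dimension $n - \rho$, and a constant-rank/slice argument in an iterated frame bundle produces a smooth family $y \mapsto F_y$ of such jet-matching isometries with $F_{x_0} = \operatorname{id}$. Differentiating $y \mapsto F_y$ along $w \in T_{x_0}N$ yields a pair $(w, A_w) \in T_{x_0}M \oplus \mathfrak{so}(T_{x_0}M)$, and differentiating the identities $F_y^*(\nabla^s R_g)_y = (\nabla^s R_g)_{x_0}$ turns them into the Nomizu--Killing prolongation equations for $(w, A_w)$, for $s \le k$. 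By Singer's theorem (\cite{Sing}, see also \cite{PTV}) this order of compatibility suffices for integrability: $(w, A_w)$ is the $1$-jet at $x_0$ of a genuine Killing field $\xi_w$ on some neighbourhood $U$ of $x_0$. Taking $w$ over a basis of $T_{x_0}N$ gives $r = n - \rho$ Killing fields $\xi_1, \dots, \xi_r$ whose values at $x_0$ span $T_{x_0}N = \operatorname{Ker} dW_M|_{x_0}$; by (i) each $\xi_i$ lies in $\operatorname{Ker} dW_M$, and after shrinking $U$ they span $\operatorname{Ker} dW_M$ throughout $U$ (spanning is an open condition). This proves (iii), and (ii) follows: for $x, y$ close together in a connected non-critical level set $N$, composing the flows of the $\xi_i$ gives a local isometry of $(M,g)$ carrying a neighbourhood of $x$ onto one of $y$; it preserves $N$ (isometries preserve each $w_i$) and hence restricts to an isometry of $(N, g|_N)$, while the $\xi_i|_N$ span $TN$, so $(N, g|_N)$ is locally homogeneous.

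\textbf{Main obstacle.} The delicate point is inside (iii): upgrading the merely pointwise jet-matching isometries to a \emph{smooth} family $y \mapsto F_y$ — this is exactly where non-criticality (the constant-rank hypothesis) enters, through a slice argument in a high-order frame bundle — and then verifying that its derivative solves the Killing prolongation system to precisely the order at which Singer's theorem guarantees integrability, so that one obtains honest Killing fields rather than merely formal ones. The remaining ingredients (naturality of Weyl invariants, compactness of $\operatorname{O}(n)$ with closedness and separation of its orbits, and Nomizu's correspondence between Killing germs and solutions of the prolongation system) are standard.
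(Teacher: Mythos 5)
This theorem is not proved in the paper at all: it is imported as a black box from Console--Olmos \cite{CO} (with \cite{Sing} and \cite{PTV} as precursors), and the author explicitly treats it as such, so there is no internal proof to compare yours against. Judged against the cited source, your outline reproduces the standard architecture of the actual proof: naturality of Weyl invariants for (i), the passage to the orthonormal frame bundle plus invariant theory for the compact group $\operatorname{O}(n)$ plus a Singer/Nomizu-type integrability theorem for (iii), and local transitivity of the resulting Killing flows for (ii). Part (i) and the deduction of (ii) from (iii) are complete and correct as written.

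As a self-contained proof, however, part (iii) is a plan rather than an argument, and the two places where it is thinnest are exactly the substance of \cite{CO}. First, the equivalence ``$W_M(x)=W_M(y)$ iff some linear isometry matches the curvature jets up to order $k$'' requires that the \emph{specific finite list} of additive invariants $w_1,\dots,w_k$ separates the $\operatorname{O}(n)$-orbits in the jet space $\mathcal V$; the first fundamental theorem and closedness of compact-group orbits give separation by the full invariant ring, but reducing to the finitely many complete contractions of bounded order is a genuine step that you assert by pointing at \cite{PTV} rather than prove. Second, the production of a \emph{smooth} family $y\mapsto F_y$ of jet-matching isometries via a ``constant-rank/slice argument in an iterated frame bundle,'' and the verification that its derivative satisfies the prolongation system to the precise order at which integrability is guaranteed (the order bookkeeping between $k$, $k+1$ and the Singer bound matters here), are stated as goals, not carried out. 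You correctly flag this as the main obstacle, which is honest, but it means the proposal should be read as a faithful reconstruction of the strategy of \cite{CO} with its two hardest lemmas left as citations, not as an independent proof.
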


In what follows we refer to the map $W_M$ as \emph{Console-Olmos map}. We point out, that if $(M, g)$ is a definable Riemannian manifold, the map $W_M$ is automatically definable, as follows from Corollary \ref{weyl def}.

\begin{rmk}\label{co singular}
For technical reasons, we will need to apply Console-Olmos theorem not only to smooth manifolds but also to manifolds of $\mathcal{C}^m$-class with $m$ arbitrary large. This difficulty arises only if we do not assume our o-minimal structure $\Sigma$ to be of QA type (see Remark \ref{smooth cell}). This causes no harm since Theorem \ref{co theorem} is still valid for $\mathcal{C}^m$-manifolds with $\mathcal{C}^m$-Riemannian metric if $m \gg k(n)=\frac{n(n-1)}{2}$, where $n=\dim M$ (it is sufficient that $p$-th partial derivatives of the coefficients of the metric in local coordinates exist and are continuous for $p<k(n)$).
\end{rmk}

\section{Riemannian manifolds with definable universal covers}\label{main part}

\subsection{Strongly bi-definable sets and bi-definable functions}\label{bi-def part}

\begin{df}\label{bi-def}
Assume that $\Sigma_1$ and $\Sigma_2$ are o-minimal structures and $\Sigma_2$ is an expansion of $\Sigma_1$. Let $M_1$ (respectively $M_2$) be a $\Sigma_1$-definable (respectively $\Sigma_2$-definable) topological space. Let $\phi \colon M_2 \to M_1$ be a continuous map. We say that a subset $S\subseteq M_1$ is \emph{strongly bi-definable} if  $S$ is definable in $M_1$ and $\phi^{-1}(S)$ is definable in $M_2$. We say that a map $f \colon M_1 \to T$ to a $\Sigma_1$-definable space $T$ is \emph{bi-definable} if $f$ is $\Sigma_1$-definable and $\phi^*f \colon M_2 \to T$ is $\Sigma_2$-definable.
\end{df}

Our definition of strongly bi-definable subsets is stronger than the notion of \emph{bi-definable subsets} introduced by Klingler, Ullmo and Yafaev in \cite{KUY}. Roughly speaking, the authors in \cite{KUY} only require a single connected component of $\phi^{-1}(S)$ to be definable.

From now we will always assume that $M_1=M$ is a compact smooth manifold and $M_2=\tM$ is its universal cover. The map $\phi=\pi \colon \tM \to M$ is the covering map. We will not mention explicitly the structures $\Sigma_1$ and $\Sigma_2$. In order to make the discussion more intrinsic, one can assume that $\Sigma_1=\R_{\an}$, since every compact smooth manifold is naturally endowed with a structure of a $\R_{\an}$-definable manifold. Therefore, our only assumption is that its universal cover $\tM$ is endowed with a structure of a $\Sigma$-definable smooth manifold, where $\Sigma=\Sigma_2$ is an o-minimal expansion of $\R_{\an}$.

It is easy to check that finite unions, finite intersections and complements of strongly bi-definable sets are strongly bi-definable.

The following proposition shows that strongly be-definable subsets share very non-trivial topological properties.

\begin{prop}\label{bi-def top}
Let $i \colon N \hookrightarrow M$ be a  strongly bi-definable subset. Let $\widetilde{N}:=\pi^{-1}(N)$. The following holds:
\begin{itemize}
\item[(1)] for any ring R the cohomology algebra $H^{\bullet}(\widetilde{N}, R)$ is finitely generated;
\item[(2)] The image of $i_*\colon \pi_1(N) \to \pi_1(M)$ is of finite index in $\pi_1(M)$.
\end{itemize}
\end{prop}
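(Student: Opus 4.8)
The plan is to exploit the fact that $\widetilde{N} = \pi^{-1}(N)$ is a $\Sigma$-definable subset of the $\Sigma$-definable space $\widetilde{M}$, and therefore inherits all the finiteness properties guaranteed by the Cellular Decomposition Theorem (Theorem \ref{o-min nice}, \textit{(ii)}). For part \textit{(1)}, I would argue as follows. Since $N \hookrightarrow M$ is strongly bi-definable, $\widetilde{N}$ is definable in $M_2 = \widetilde{M}$ by definition. By Theorem \ref{o-min nice}, \textit{(ii)} (applied in the setting of definable spaces, as noted after Definition \ref{dif top space}), $\widetilde{N}$ is homotopy equivalent to a finite CW-complex. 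A finite CW-complex has finitely generated cohomology (even as a graded group, hence a fortiori its cup-product algebra is finitely generated) with coefficients in any ring $R$: this follows from the cellular chain complex being finitely generated in each degree and concentrated in finitely many degrees. So part \textit{(1)} is essentially immediate once one invokes the cellular decomposition for definable spaces; the only thing to be careful about is the passage from a definable \emph{subspace} of a definable space to the finite CW-structure, but this is exactly what the transfer of Theorem \ref{o-min nice} to definable spaces provides.

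For part \textit{(2)}, the key point is again that $\widetilde{N}$ has finitely many connected components, since it is definable (Theorem \ref{o-min nice}, \textit{(ii)} — finitely many connected components). Now $\pi \colon \widetilde{M} \to M$ is the universal cover, so $\widetilde{M}$ is the $\pi_1(M)$-cover of $M$, and $\pi|_{\widetilde{N}} \colon \widetilde{N} \to N$ is the pullback covering. The connected components of $\widetilde{N}$ are in bijection with the coset space $\pi_1(M)/i_*\pi_1(N)$ — more precisely, if $N$ is connected (and if not, one treats each component separately, or passes to a connected component $N_0$ of $N$, noting that $\pi^{-1}(N_0)$ is still definable as an open-and-closed definable subset of the definable set $\widetilde{N}$), then $\pi_0(\widetilde{N}) \cong i_*\pi_1(N) \backslash \pi_1(M)$ by covering space theory: a point of $N$ over which we lift determines, for each $\gamma \in \pi_1(M)$, a sheet, and two elements $\gamma, \gamma'$ land in the same component iff $\gamma' \gamma^{-1} \in i_*\pi_1(N)$. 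Since $\pi_0(\widetilde{N})$ is finite, the index $[\pi_1(M) : i_*\pi_1(N)]$ is finite.

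The one genuinely delicate point — and what I expect to be the main obstacle — is the identification $\pi_0(\pi^{-1}(N)) \cong i_*\pi_1(N)\backslash\pi_1(M)$ when $N$ is not assumed to be path-connected, nice, or even a manifold (it is merely a definable subset that happens to be a submanifold here, but one should not lean on smoothness for the topological statement). The clean way around this is: by Theorem \ref{o-min nice}, \textit{(i)}, connected definable sets are path-connected, and $N$, being definable in $M$, has finitely many connected components $N_1, \ldots, N_r$, each path-connected; it suffices to prove the index statement for one component $N_j$ with $i_*\pi_1(N_j) \subseteq i_*\pi_1(N)$, since a finite-index subgroup being contained in $i_*\pi_1(N)$ forces the latter to have finite index too. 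For a path-connected $N_j$, the restricted cover $\pi^{-1}(N_j) \to N_j$ is classified by the subgroup $i_*\pi_1(N_j) \leq \pi_1(M)$ (this is the standard classification of covers, applied to the connected, locally path-connected, semilocally simply connected space $N_j$ — all of which hold since $N_j$ is homotopy equivalent to a finite CW-complex and is a subspace of a manifold), and its fibre, i.e. its set of sheets over a basepoint, is in bijection with $i_*\pi_1(N_j)\backslash \pi_1(M)$; the number of connected components of $\pi^{-1}(N_j)$ equals the number of orbits of $\pi_1(N_j)$ on this coset space acting by deck transformations, which is again $[\pi_1(M):i_*\pi_1(N_j)]$ after unwinding — and this is finite because $\pi^{-1}(N_j)$ is definable. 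I would spell out the covering-space bookkeeping carefully, as that is where sign/side conventions (left vs.\ right cosets, which subgroup classifies which cover) tend to cause friction, but no new idea beyond the finiteness of $\pi_0$ of a definable set is needed.
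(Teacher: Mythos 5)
Your proposal is correct and follows essentially the same route as the paper: part (1) via the finite cell decomposition of the definable set $\widetilde{N}$, and part (2) via the finiteness of $\pi_0(\widetilde{N})$ together with the identification of $\pi_0(\widetilde{N})$ with the coset space of $i_*\pi_1(N)$ in $\pi_1(M)$. The paper's proof is a two-line version of exactly this argument; your extra care with disconnected $N$ and the covering-space bookkeeping is a harmless elaboration.
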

\begin{proof}
The item \textit{(i)} follows from the finite cell decomposition for $\widetilde{N}$ (Theorem \ref{o-min nice}, \textit{(iii)}). To see \textit{(ii)} note that \textit{(i)} implies that $\widetilde{N}$ has finitely many connected components and the connected components of $\widetilde{N}$ are in bijection with $\pi_1(M)/i_*(\pi_1(N))$.
\end{proof}

\subsection{Proof of Theorem A}\label{proof a part}

\begin{thrm}\label{thmA}
Let $M$ be a compact real analytic manifold and $g$ a real analytic Riemann metric on $M$. Let $\pi \colon \tM \to M$ be the universal cover and $\widetilde{g}:=\pi^*g$.  Assume that $\tM$ is a definable manifold and that $\pi^*g$ is a definable Riemannian metric on it. Then there exists a  compact simply-connected definable space $\widetilde{K}$ and a smooth definable map $\Phi \colon \tM \to \widetilde{K}$ such that every non-critical level set of $\Phi$ is a locally homogeneous Riemannian manifold with respect to the metric $\widetilde{g}$. Moreover, if $x \in \tM$ is a non-critical point, the level sets of $\Phi$ in a neighbourhood $U$ of $x$ are given by orbits of a connected Lie group locally acting by isometries on $U$. 
\end{thrm}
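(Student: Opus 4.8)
The plan is to construct the Console–Olmos map $W_{\tM}$ for $(\tM, \widetilde g)$ and show it descends appropriately. Recall from Theorem \ref{PTV thrm} and Theorem \ref{co theorem} that we have the additive Weyl invariants $w_1, \dots, w_k \colon \tM \to \R$ (with $k = k(n)$ depending only on $n = \dim M$), assembled into $W_{\tM} = (w_1, \dots, w_k) \colon \tM \to \R^k$; by Corollary \ref{weyl def} this map is $\Sigma$-definable since $\widetilde g$ is. The key point is that each $w_i$ is a Weyl invariant built canonically out of $g$ by covariant differentiation and tensor contraction, hence $w_i = \pi^* (w_i^M)$ where $w_i^M \colon M \to \R$ is the corresponding additive Weyl invariant of $(M, g)$ downstairs — in other words, $W_{\tM}$ is $\pi_1(M)$-invariant. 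Thus $W_{\tM}$ factors as $W_{\tM} = W_M \circ \pi$ for a definable map $W_M \colon M \to \R^k$, which is real analytic because $g$ is. Since $M$ is compact, the image $W_M(M) \subset \R^k$ is a compact definable subset of $\R^k$.

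\textbf{Next I would address simple connectivity and the definition of $\widetilde K$.} The image $K := W_M(M)$ need not be simply connected, so one cannot simply take $\widetilde K = K$ and lift. Instead, let $K := W_M(M)$, a compact connected definable space, and let $p \colon \widetilde K \to K$ be its universal cover. The subtlety is that $\pi_1(K)$ need not be finite a priori, so Lemma \ref{o-min cover} does not immediately apply to put a definable structure on $\widetilde K$. I would resolve this by observing that $W_M \colon M \to K$ is surjective with $M$ compact, so $\pi_1(M) \twoheadrightarrow \pi_1(K)$ need not hold in general — but the lift $W_{\tM} \colon \tM \to K$ of $W_M$ along $\pi$, being defined on the \emph{simply connected} space $\tM$, factors through the universal cover: $W_{\tM} = p \circ \Phi$ for a unique continuous $\Phi \colon \tM \to \widetilde K$. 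Definability of $\Phi$ and the definable structure on $\widetilde K$ should be extracted as follows: cover $K$ by finitely many definable simply connected open sets $V_j$ (Theorem \ref{o-min nice}(ii)); over each $V_j$ the restriction $W_{\tM}^{-1}(V_j) \to V_j$ is a definable map to a simply connected definable set and the local sections of $p$ over $V_j$ are definable homeomorphisms onto the components of $p^{-1}(V_j)$, giving $\Phi$ in local definable charts. The charts $(p^{-1}(V_j), \text{section}^{-1})$ form a definable atlas on $\widetilde K$, with respect to which $p$ is definable; compactness of $\widetilde K$ follows since $\widetilde K \to K$ has fibres $\pi_1(K)$ which must be finite (else $\widetilde K$ would have a non-definable discrete fibre, contradicting that the induced structure is definable — alternatively one argues directly that $\pi_1(M) \to \pi_1(K)$ is surjective because $W_M$ is a definable surjection of compact definable spaces, whence $\pi_1(K)$ is a finite quotient-related group; I would pin this down carefully). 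Smoothness of $\Phi$ follows from smoothness of $W_{\tM}$ together with the local description.

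\textbf{For the fibre statement}, let $x \in \tM$ be a non-critical point of $\Phi$. Since $p$ is a local homeomorphism, $x$ is also a non-critical point of $W_{\tM} = p \circ \Phi$, and near $x$ the level sets of $\Phi$ coincide with the level sets of $W_{\tM}$. Now apply Theorem \ref{co theorem}(iii) to $(\tM, \widetilde g)$: there is an open neighbourhood $U$ of $x$ and Killing vector fields $\xi_1, \dots, \xi_r \in \mathcal C^\infty(U, T\tM|_U)$ generating $\ker dW_{\tM}$ as a $\mathcal C^\infty(U)$-module. The Lie algebra generated by $\xi_1, \dots, \xi_r$ (after shrinking $U$) integrates, by Lie's theorems / the Frobenius-type argument for Killing fields, to a local action of a connected Lie group $G$ on $(U, \widetilde g)$ by isometries whose orbits are exactly the connected components of the level sets of $W_{\tM}|_U$, hence of $\Phi|_U$. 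By Theorem \ref{co theorem}(ii), the non-critical level sets of $W_{\tM}$ — equivalently of $\Phi$ — are locally homogeneous Riemannian submanifolds of $(\tM, \widetilde g)$. If one uses an o-minimal structure that is not of QA type, one applies Remark \ref{co singular} and works in $\mathcal C^m$-category for $m \gg k(n)$ throughout, using Lemma \ref{smooth strata} to stratify, which changes nothing essential.

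\textbf{The main obstacle} I anticipate is the second step: rigorously producing the compact simply connected definable space $\widetilde K$ and the definable smooth map $\Phi$. One must show $\pi_1(K)$ is finite (so that Lemma \ref{o-min cover}-type reasoning applies and $\widetilde K$ is compact and definable), and handle the fact that $K = W_M(M)$ is a definable space that need not be a manifold, only a finite CW-complex with a definable structure — so "non-critical point of $\Phi$" must be interpreted via the smooth-manifold stratification (Lemma \ref{smooth strata}) of $\widetilde K$, and the statement about level sets being locally homogeneous must be read on the open dense set where $\Phi$ is a submersion onto a stratum. Threading this point-set-topological and o-minimal bookkeeping together with the Riemannian input (Theorems \ref{PTV thrm}, \ref{co theorem}) cleanly is where the real work lies; the geometric heart — $\pi_1$-invariance of additive Weyl invariants plus Console–Olmos — is essentially automatic.
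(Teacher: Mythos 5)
Your construction follows the paper's route exactly: the definable Console--Olmos map $W_{\tM}$, its descent to $W_M$ on the compact $M$ (so that $K=W_M(M)$ is compact), the lift $\Phi$ of $W_{\tM}$ to the universal cover $p\colon\widetilde K\to K$, and the Console--Olmos theorem for the statement about non-critical fibres and the local isometric action of a connected Lie group. The one genuine gap is exactly at the point you flag but do not resolve: the finiteness of $\pi_1(K)$, which is what makes $\widetilde K$ compact and lets Lemma \ref{o-min cover} endow it with a definable structure for which $\Phi$ is definable. Neither of the two arguments you sketch works. The first (``else $\widetilde K$ would have a non-definable discrete fibre, contradicting that the induced structure is definable'') is circular: there is no definable structure on $\widetilde K$ to contradict until \emph{after} finiteness of $\pi_1(K)$ is known, since that structure is produced by Lemma \ref{o-min cover}, whose hypothesis is precisely that the fundamental group is finite. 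The second is false as stated: a surjective continuous map of compact definable spaces need not be surjective on fundamental groups, and even if $\pi_1(M)\to\pi_1(K)$ were surjective this would not give finiteness, since $\pi_1(M)$ is in general infinite.

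The paper closes this gap with a short, purely o-minimal observation. A fibre $W_{\tM}^{-1}(y)$ is a definable subset of $\tM$, hence has finitely many connected components by Theorem \ref{o-min nice}, \textit{(ii)}. Since $W_{\tM}=p\circ\Phi$ and each connected component of $W_{\tM}^{-1}(y)$ is mapped by $\Phi$ to a single point of the discrete fibre $p^{-1}(y)$, the fibres of $p$ are finite, i.e.\ $\pi_1(K)$ is finite; then $\widetilde K$ is a finite cover of the compact $K$, hence compact, and Lemma \ref{o-min cover} applies verbatim to produce the definable structure on $\widetilde K$ and the definability of the lift $\Phi$. (This is the same mechanism that powers Proposition \ref{bi-def top}: definability of a set upstairs bounds its number of components, hence the index of the relevant subgroup.) With this inserted, the rest of your write-up --- the descent $W_{\tM}=\pi^*W_M$ via local isometry, the interpretation of non-critical points through a smooth stratum of $\widetilde K$, and the integration of the Killing fields from Theorem \ref{co theorem}, \textit{(iii)} to a local Lie group action --- matches the paper's proof.
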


\begin{rmk}
One should clarify what is meant by non-critical points and non-critical level sets of $\Phi$, since a priori $\Phi$ is just a map to a topological space. In fact, the target $\widetilde{K}$ of $\Phi$ is definable and by Remark \ref{smooth cell} there exists a dense open definable subset $U \subset \widetilde{K}$ which is a definable $\mathcal{C}^2$ manifold. Therefore the differential $d\Phi$ is well-defined on $\Phi^{-1}(U)$. We say that $x \in M$ is critical (respectively  that a level set $N = \Phi^{-1}(y), \ y \in \widetilde{K},$ is critical), if $\Phi(x) \in U$ and $d\Phi|_{x}$ is not surjective (respectively if $y \in U$ and $d\Phi|_{x}$ is not surjective for some $x \in N$).
\end{rmk}

\begin{proof}[Proof of Theorem \ref{thmA}]
Let $W_{\tM}$ to be the Console-Olmos map of $(\tM, \widetilde{g})$. Let $K \subseteq \R^k$ be the image of $W_{\tM}$. By Corollary \ref{weyl def} the map $W_{\tM}$ is definable. In particular, $K$ is definable in $\R^k$ and the fibres of $W_{\tM}$ are definable subspaces of $\tM$. 

Notice that $W_{\tM}$ coincides with the pullback $\pi^*W_{M}$ of the Console-Olmos map of $(M,g)$. Indeed, $\pi \colon (\tM, \widetilde{g}) \to (M, g)$ is a local isometry, therefore each point $x \in \tM$ has a neighbourhood $U$, such that $W_{\tM}|_U = (\pi^*W_M)|_U$. Hence $K= W_M(M)$  is compact since $M$ is compact.

Let $p \colon \widetilde{K} \to K$ be the universal cover of $K$ and $\Phi \colon \tM \to \widetilde{K}$ be the canonical lifting of $W_{\tM}$. We get a diagram:
\[
\xymatrix{
\widetilde{M} \ar[rr]^{\Phi}  \ar[rrd]^{W_{\tM}} \ar[d]_{\pi} && \widetilde{K} \ar[d]^{p} \\
M \ar[rr]_{W_M} && K
}
\]
Since $W_{\tM}$ is definable, its fibres have finitely many connected components. At the same time, $W_{\tM}=p \circ \Phi$, so the fibres of $p$ have finitely many connected components as well. This means that $\pi_1(K)$ is finite and by Lemma \ref{o-min cover} the topological space $\widetilde{K}$ can be endowed with a definable structure in such a way that $\Phi \colon \tM \to \widetilde{K}$ is definable. Since  $\widetilde{K}$ is a finite cover of $K$ and $K$ is compact, $\widetilde{K}$ is compact as well. The rest of the claim follows immediately from Console-Olmos theorem (Theorem \ref{co theorem}) 
\end{proof}

The conclusion of Theorem \ref{thmA} holds only for non-critical points and this limitation cannot be ruled out. Below we present two examples when the map $\Phi$ admits critical fibres.

\begin{ex}\label{revolution}

 Let $g$ be  a Riemannian metric on the two-dimensional sphere $S^2$ such that $$\operatorname{Isom}(S^2, g) \simeq \operatorname{U}(1).$$ 

More explicitly, one can take $f \colon [-1;1] \to \R$ to be an analytic function with 
\[
\begin{cases}
f(-1)=f(1)=0\\
f(t)>0, \ -1 <t <1.
\end{cases}
\]
Let $(M, g)$ be the surface of revolution of the graph of $f$ inside $\R^3$ with the induced metric. Then $M$ is diffeomorphic to $S^2$ and if $f$ is generic enough, the only isometries of $(M,g)$ are given by rotations around the $z$-axis.

The Console-Olmos map in this case is the projection onto the $z$-axis $$W_M \colon M \to [-1;1].$$ The non-critical fibres $W_M^{-1}(t), t \neq \pm 1$ are isometric to $S^1$ with the standard homogeneous metric (up to scaling), while the critical fibres $W_M^{-1}(-1)$ and $W_M^{-1}(1)$ are just points. 

Notice that $M$ is simply connected, therefore $(\widetilde{M}, \widetilde{g})=(M, g)$ and $(M, g)$ admits a natural structure of a definable Riemannian manifold  in $\R_{an}$.
\end{ex}

\begin{ex}\label{revolution2}
The previous example formally satisfies the assumptions of Theorem \ref{mainA}, although not very interesting from our perspective, since in this case $\pi_1(M)=\{1\}$. Here is a way to modify this example in order to get examples with interesting fundamental groups. 

Let $S$ be a two-dimensional sphere and $g_S$ be a metric on it as in Example \ref{revolution}. Let $(M_0, g_0)$ be a compact locally homogeneous Riemannian manifold. Take $M:= M_0 \times S$ with the product metric $g=(g_0 \times g_S)$.

The universal cover $\widetilde{M_0}$ of $M_0$ can be identified with a homogeneous space $G/K$. The proof of \cite{BKT}, Lemma 2.1 can be easily adapted to show that $\widetilde{M_0}=G/K$  carries canonical $\R_{\alg}$-definable structure. The metric $g_0$ pullbacks to a semi-algebraic homogeneous metric $\widetilde{g_0}$ on $\widetilde{M_0}$. This implies that  the universal cover of $(M, g)$ is isometric to $(\widetilde{M_0} \times S, \widetilde{g_0}\times g_S)$ and is an $\R_{an}$-definable Riemannian manifold. 

The Console-Olmos map for $(M, g)$ is the composition of the projection on $S$ and the Console-Olmos map for $S$:
\[
\xymatrix{
M=M_0 \times S \ar[r]^{p_S} \ar[rd]_{W_M} & S \ar[d]_{W_S}\\
& [-1;1].
}
\]
In particular the non-critical fibres of $W_M$ are isometric to locally homogeneous manifolds $M_0 \times S^1$, while the critical fibres $W_M^{-1}(\{-1\})$ and $W_M^{-1}(\{1\})$ are isometric to $(M_0, g_0)$. One can further modify the geometry of $M$ such that it does not split as a direct product anymore: instead of taking a product metric $g_0 \times g_S$ one can take a skew-product metric $(p^*_Se^{\phi})g_0 \times g_S$ where $\phi \colon S \to \R$ is a real-analytic function.
\end{ex}

Examples \ref{revolution} and \ref{revolution2} suggest that there is no way to get rid of the singularities of the map $\Phi$. Nevertheless, one can find a stratification of $M$ by strongly bi-definable locally closed subsets each of which admits a smooth fibration by locally homogeneous manifolds. This is the content of the following Proposition.

\begin{prop}\label{strata}
Let $\Sigma$ be an o-minimal structure and $(M, g)$ a $\Sigma$-definable Riemannian manifold of class $\mathcal{C}^m$ for $m\gg \frac{n(n-1)}{2}$, where $n=\dim M$. Let $\pi \colon \widetilde{M} \to M$ be a cover of $M$ and $\widetilde{g}:=\pi^*g$. Assume that $(\widetilde{M}, \widetilde{g})$ is a definable Riemannian manifold. Then for every $k, \ m> k >0$ there exists a stratification of $M$ by locally closed strongly bi-definable subsets 
\[
\varnothing = S_0 \subset \ldots \subset S_r=M,
\]
such that each $V_i:= S_i \setminus S_{i-1}$ is a $\mathcal{C}^k$-submanifold of $M$ that admits definable locally trivial fibrations $f_i \colon V_i \to K_i$ with locally homogeneous fibres onto a definable topological space $K_i$. If, moreover, $\Sigma$ is of QA type, the same holds for $k=\infty$.
\end{prop}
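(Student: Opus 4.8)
The plan is to build the stratification by combining the Console-Olmos map with the smooth stratification of definable spaces (Lemma \ref{smooth strata}) and then checking that all the pieces are strongly bi-definable. First I would form the Console-Olmos map $W_M \colon M \to K \subseteq \R^k$ of $(M,g)$; since $(M,g)$ is $\mathcal C^m$-definable with $m \gg n(n-1)/2$, the additive Weyl invariants are definable (Corollary \ref{weyl def}, in the $\mathcal C^m$-variant of Remark \ref{co singular}), so $W_M$ is a definable $\mathcal C^{m-k(n)}$ map. As in the proof of Theorem \ref{thmA}, $W_{\widetilde M} = \pi^* W_M$, so the fibres of $W_M$ pull back to the fibres of $W_{\widetilde M}$, which are definable in $\widetilde M$; this is the source of bi-definability. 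Applying Lemma \ref{smooth strata} to the definable space $K$ (in the $\mathcal C^k$ or, in the QA case, $\mathcal C^\infty$ version), I get a filtration $\varnothing = T_0 \subset \dots \subset T_s = K$ whose successive differences $L_j = T_j \setminus T_{j-1}$ are definable $\mathcal C^k$-manifolds.

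Next I would refine this to handle the critical locus of $W_M$. Over each stratum $L_j$ the restricted map $W_M \colon W_M^{-1}(L_j) \to L_j$ is a definable $\mathcal C^k$ map between $\mathcal C^k$-manifolds, so by the $\mathcal C^k$ generic smoothness/trivialisation results available in o-minimal geometry (definable $\mathcal C^k$-triviality, e.g.\ via cell decomposition of the source adapted to the fibres of $W_M$, Theorem \ref{o-min nice}(ii)--(iii) and Remark \ref{smooth cell}) I can partition $L_j$ further into definable locally closed pieces over each of which $W_M$ is a locally trivial $\mathcal C^k$-fibration and the rank of $dW_M$ is locally constant; discarding the part where $dW_M$ is not of full fibre rank and iterating on the lower-dimensional leftover, I obtain a finite collection of definable locally closed subsets $K_i \subseteq K$ such that $V_i := W_M^{-1}(K_i)$ is a $\mathcal C^k$-submanifold of $M$, $f_i := W_M|_{V_i} \colon V_i \to K_i$ is a definable locally trivial $\mathcal C^k$-fibration, and every fibre of $f_i$ is a non-critical level set of $W_M$. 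By Console-Olmos (Theorem \ref{co theorem}(ii)) each such fibre is a locally homogeneous Riemannian manifold with respect to $g$. Ordering the $V_i$ by the dimension of their closures and setting $S_i = V_1 \cup \dots \cup V_i$ (after re-indexing so that $\dim \overline{V_i}$ is non-decreasing and each $S_i$ is closed, which is arranged exactly as in the proof of Lemma \ref{smooth strata}) gives the required filtration $\varnothing = S_0 \subset \dots \subset S_r = M$ with $V_i = S_i \setminus S_{i-1}$.

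It remains to verify that each $S_i$, equivalently each $V_i$, is strongly bi-definable. Definability of the $V_i$ in $M$ is clear from the construction, since they are obtained from the definable map $W_M$ by definable operations. For the pullback: each $V_i$ is a finite boolean combination of sets of the form $W_M^{-1}(A)$ with $A \subseteq K$ definable, together with the loci where $dW_M$ has a given rank, which are themselves $W_M$-saturated definable subsets of $M$; hence $\pi^{-1}(V_i) = W_{\widetilde M}^{-1}(A) \cap \dots$ is the corresponding boolean combination of preimages under the definable map $W_{\widetilde M} = \pi^* W_M$, and therefore definable in $\widetilde M$. Thus $V_i$, and so every $S_i$, is strongly bi-definable in the sense of Definition \ref{bi-def}. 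Finally, when $\Sigma$ is of QA type one runs the same argument with $\mathcal C^\infty$ in place of $\mathcal C^k$ throughout, using the $\mathcal C^\infty$ cell decomposition of Remark \ref{smooth cell}, which is legitimate since the metric is then taken smooth and all the Weyl invariants and the map $W_M$ are smooth.

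The main obstacle I expect is the second step: producing, from the single definable map $W_M$, a \emph{finite} definable partition over which $W_M$ is simultaneously a locally trivial $\mathcal C^k$-fibration onto a $\mathcal C^k$-manifold base \emph{and} has fibres that are genuine non-critical level sets, all while keeping every piece $W_M$-saturated so that bi-definability survives. This is exactly where one needs the $\mathcal C^k$ (or $\mathcal C^\infty$) refinement of cell decomposition and the definable triviality theorem, and some care is required because the ambient o-minimal structure need not be of QA type, so one only gets finite differentiability and must check $k$ can be taken as large as one wishes below $m$; this is precisely the reason for the hypothesis $m \gg n(n-1)/2$ and the clause ``for every $k$, $m > k > 0$'' in the statement.
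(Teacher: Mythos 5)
There is a genuine gap in your second step. You try to manufacture all the strata from the single map $W_M$, refining the partition of $K$ until ``every fibre of $f_i$ is a non-critical level set of $W_M$''. This cannot be achieved: the critical level sets of $W_M$ are in general non-empty (the two poles in Example \ref{revolution}, the two copies of $M_0$ in Example \ref{revolution2}), they must be covered by some stratum of any partition of $M$, and no refinement of the target turns a critical level set into a non-critical one --- on the critical locus $dW_M$ simply does not have full rank. Theorem \ref{co theorem}(ii) says nothing about critical level sets, so on those strata you obtain no local homogeneity. The paper's proof handles exactly this point by induction on $\dim M$: it peels off the saturated open dense piece $M^{\circ}=W_M^{-1}(K^{\circ})$ (preimage of the set of non-critical \emph{values}), where Console--Olmos applies directly, and then treats the complement $N=M\setminus M^{\circ}$ --- of strictly smaller dimension --- as a new definable Riemannian manifold: after a $\mathcal{C}^k$ stratification of $N$ (Lemma \ref{smooth strata}, made strongly bi-definable by repeatedly passing to singular loci), each stratum $U_i$ carries the induced metric $g|_{U_i}$ and one applies the induction hypothesis to $(U_i, g|_{U_i})$, i.e.\ one uses the Console--Olmos map \emph{of the stratum itself}, whose non-critical fibres are locally homogeneous for the induced metric. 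Your phrase ``iterating on the lower-dimensional leftover'' gestures at this, but as written the iteration still uses $W_M$ and its level sets, which is not enough; the missing idea is that each lower-dimensional stratum must be re-equipped with its own Console--Olmos map.

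A secondary inaccuracy: the loci where $dW_M$ has a given rank are not $W_M$-saturated --- the rank of $dW_M$ can vary along a single level set --- so your saturation argument for bi-definability does not apply to them. (They are in fact strongly bi-definable for a different reason: $\pi$ is a local isometry and $W_{\widetilde M}=W_M\circ\pi$, so the rank locus upstairs is precisely the preimage of the rank locus downstairs.) The cleaner route, and the one the paper takes, is to use only preimages of definable sets of values, which are genuinely saturated, together with singular loci of already strongly bi-definable sets, whose preimages are the singular loci of the corresponding definable sets in $\widetilde M$.
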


Notice that we do not require $(M,g)$ to be compact anymore. We also do not ask that $\widetilde{M} \to M$ be universal.

\begin{proof}[Proof of Proposition \ref{strata}]
The proof is done by induction on $\dim M$. If $\dim M=0$, the statement is clear. Otherwise, let $W_M \colon M \to \R^n$ be the Console-Olmos map of $(M,g)$ and $K=W_M(M)$. This map exists (see Remark \ref{co singular}) and its regular fibres are locally homogeneous. It is moreover bi-definable with respect to the map $\widetilde{M} \to M$ since the same argument as in the proof of Theorem \ref{thmA} applies.

Let  $K^{\circ} \subset K$ be the set of non-critical values of $W_M$. Clearly, $K^{\circ}$ is an open dense definable subset and $M^{\circ}:=W_M^{-1}(K^{\circ})$ is an open dense definable submanifold  of $M$. In fact, $M^{\circ}$ is strongly bi-definable since $\pi^{-1}(M^{\circ})=W_{\widetilde{M}}^{-1}(K^{\circ})$ is definable in $\widetilde{M}$. 

Let $N:= M \setminus M^{\circ}$. This is a closed strongly bi-definable subset of $M$. Notice that $\dim N < \dim M$. In fact, $N=S_{r-1}$ will be the first step of our stratification.

By Lemma \ref{smooth strata} we can find a stratification of $N$ by definable locally closed subsets $N_i, \ \varnothing=N_0 \subset \ldots \subset N_s =N$.  

We claim that in fact, we can choose all $N_i$ to be strongly bi-definable. Indeed, one can choose $N_{s-1}$ to be the singular locus of $N$. Clearly $\pi^{-1}(N_{s-1})$ is the singular locus of the definable subset $\pi^{-1}(N) \subset \widetilde{M}$, hence $N_{s-1}$ is strongly bi-definable  and we proceed by induction on $\dim N$.

Now for every $i$ the set $U_i:=N_i\setminus N_{i-1}$ is a definable Riemannian $\mathcal{C}^k$-manifold with definable cover $\pi_i \colon \widetilde{U_i} = \pi^{-1}(U_i) \to U_i$ and $\pi_i^*g=\widetilde{g}|_{\widetilde{U_i}}$ is definable. Finally, notice that $\dim U_i \le \dim N < \dim M$. Therefore, we may apply the induction assumption to $U_i$ and for each $U_i$ get stratifications
\[
\varnothing = S^i_0 \subset \ldots \subset S^i_{r_i}
\]
with the required properties. It is left to gather all nested stratifications $S^i_j$ into a single stratification $S_p$ of $M$. For this one can put $S_p:= \bigcup_{\dim S^i_j =n_p} S^i_j$, where $\{n_p\}$ is the set of natural numbers appearing as dimensions of the strata of $S^i_j$.

For each $(V_i:= S_i \setminus S_{i-1}, g|_{V_i})$ the Console-Olmos map $f_i:=W_{V_i} \colon V_i \to K_i$ has no critical fibres, therefore defines a locally trivial fibration with locally homogeneous fibres.
\end{proof}

\begin{cor}\label{union}
In the assumptions of Theorem \ref{thmA} the manifold $M$ can be written as a disjoint union of strongly bi-definable locally homogeneous manifolds.
\end{cor}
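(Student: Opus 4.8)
The plan is to deduce the Corollary directly from Proposition \ref{strata}, by decomposing each stratum into the level sets of its Console--Olmos map. The hypotheses of Theorem \ref{thmA} are a special case of those of Proposition \ref{strata} (here $g$ is real analytic, hence of class $\mathcal{C}^{m}$ for every $m$), so we obtain a finite stratification $M = V_1 \sqcup \ldots \sqcup V_r$ into locally closed strongly bi-definable subsets $V_i = S_i \setminus S_{i-1}$, each a $\mathcal{C}^{k}$-submanifold equipped with a definable fibration $f_i \colon V_i \to K_i$ all of whose fibres are locally homogeneous. From the proof of Proposition \ref{strata} we record the two facts we will need: $f_i$ is the genuine Console--Olmos map $W_{V_i}$ of $(V_i, g|_{V_i})$, and it has no critical point on $V_i$.

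Next I would fix $i$ and $y \in K_i$ and argue that the level set $L_{i,y} := f_i^{-1}(y)$ is a strongly bi-definable locally homogeneous submanifold of $M$. Since $df_i$ is everywhere surjective on $V_i$, $L_{i,y}$ is a submanifold by the implicit function theorem, and it is locally homogeneous by Theorem \ref{co theorem}~\textit{(ii)}. For strong bi-definability I verify the two conditions of Definition \ref{bi-def}. First, $L_{i,y}$ is definable in $M$, because $f_i$ is definable by Corollary \ref{weyl def} and $\{y\}$ is semi-algebraic. Second, $\pi^{-1}(L_{i,y})$ is definable in $\tM$: strong bi-definability of $V_i$ gives that $\pi^{-1}(V_i)$ is definable in $\tM$, and $\pi$ restricts to a covering $\pi^{-1}(V_i) \to V_i$ with $\widetilde{g}|_{\pi^{-1}(V_i)} = \pi^{*}(g|_{V_i})$; arguing exactly as in the proof of Theorem \ref{thmA} (the additive Weyl invariants are local and invariant under local isometries), the Console--Olmos map of $(\pi^{-1}(V_i), \widetilde{g}|_{\pi^{-1}(V_i)})$ is $\pi^{*}f_i$, which is therefore definable in $\tM$, so that $\pi^{-1}(L_{i,y}) = (\pi^{*}f_i)^{-1}(y)$ is definable as well.

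Finally, for each $i$ one has $V_i = \bigsqcup_{y \in K_i} L_{i,y}$, and assembling the strata yields $M = \bigsqcup_{i=1}^{r} \bigsqcup_{y \in K_i} L_{i,y}$, a disjoint union of strongly bi-definable locally homogeneous submanifolds, as required. If one wishes the pieces to be connected, one further splits each $L_{i,y}$ into its finitely many connected components (finitely many because $L_{i,y}$ is definable), which remain mutually locally isometric since they share the same additive Weyl invariants.

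I do not expect a genuine obstacle here, since the substance is already contained in Proposition \ref{strata}; the only point that needs a little care is to take $f_i$ to be the Console--Olmos map of the stratum rather than an arbitrary locally trivial fibration, so that pulling back along $\pi$ again produces a Console--Olmos map upstairs and hence a definable function on $\pi^{-1}(V_i)$ --- this is the same local-isometry observation that is used throughout the proof of Theorem \ref{thmA}.
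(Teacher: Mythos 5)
Your proof is correct and takes essentially the same route as the paper's (one-line) proof of Corollary \ref{union}, namely decomposing each stratum $V_i$ from Proposition \ref{strata} into the fibres of $f_i$. The paper leaves the strong bi-definability of the individual fibres implicit; your verification via the local-isometry invariance of the Console--Olmos map (so that $\pi^{*}f_i$ is the Console--Olmos map of $\pi^{-1}(V_i)$ and hence definable upstairs) is exactly the intended argument.
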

\begin{proof}
Take the union of all the fibres of the maps $f_i$, where $f_i$ are as in Proposition \ref{strata}.
\end{proof}

Another Corollary of Theorem \ref{thmA} is that the definability of the Riemannian universal cover of a Riemannian manifold $(M,g)$ puts certain topological restrictions on $M$, namely on its fundamental group. 

\begin{cor}\label{quasi-isometric}
Let $\Sigma$ be an o-minimal structure and $(U, g)$  a $\Sigma$-definable Riemannian manifold. Let $\Gamma$ be a finitely generated group. Assume that $\Gamma$ acts on $(U, g)$ by isometries and the action is free, cocompact and properly discontinuous. Then $\Gamma$ is quasi-isometric to a locally homogeneous space.
\end{cor}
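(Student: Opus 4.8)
The plan is to produce, from the $\Gamma$-action, a compact manifold to which Theorem~\ref{thmA} applies, and then to show that the decomposition of $M$ it gives is compatible with the quasi-isometry type of $\Gamma$. Concretely, set $M:=U/\Gamma$. Since the action is free, properly discontinuous and cocompact by isometries, $M$ is a compact manifold carrying a Riemannian metric $g_M$ with $U \to M$ a Riemannian covering and $\pi^*g_M = g$. Replacing $U$ by its universal cover if necessary (the universal cover of $U$ is still definable by an argument as in Lemma~\ref{o-min cover}, provided $\pi_1(U)$ is finite; in general one works with the cover $U\to M$ directly, as Proposition~\ref{strata} only needs $\widetilde M\to M$ to be \emph{a} cover), the hypotheses of Theorem~\ref{thmA} (or, more precisely, Proposition~\ref{strata}) are met. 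Note that $\Gamma$ is quasi-isometric to $M$ equipped with the length metric $d_g$ induced by $g_M$, by the \v{S}varc--Milnor lemma, so it suffices to understand $(U,d_g)$ up to quasi-isometry.

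Next I would invoke Proposition~\ref{strata}: there is a stratification $\varnothing = S_0 \subset \dots \subset S_r = M$ by strongly bi-definable locally closed subsets with each $V_i = S_i\setminus S_{i-1}$ a submanifold admitting a definable locally trivial fibration $f_i\colon V_i \to K_i$ with locally homogeneous fibres over a definable space $K_i$. The open stratum $V_r = M^\circ$ is dense. I would concentrate on $V_r$: pulling back to $U$, we get $\pi^{-1}(V_r) = \widetilde{M^\circ} = W_{\widetilde M}^{-1}(K_r^\circ)$, a definable open dense subset of $U$. Its fibres over $K_r^\circ$ (equivalently, the $\Gamma$-orbits refined by the Console--Olmos stratification) are locally homogeneous, hence complete-able to homogeneous spaces; since $K_r^\circ$ is compact up to the boundary strata, the "transverse directions" contribute only a bounded amount to distances. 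More precisely, one argues that the inclusion $\widetilde{M^\circ}\hookrightarrow U$ is a quasi-isometry (both are geodesically dense, and $M\setminus M^\circ$ has lower dimension and is definable, hence the complement does not create "extra fast routes" — one needs here that $U$ with its intrinsic metric and $\widetilde{M^\circ}$ with its intrinsic metric are within bounded additive/multiplicative distortion, which follows from compactness of $M$ and the tubular-neighbourhood structure around the lower strata). Then I would show that the fibration $\Phi\colon \widetilde{M^\circ}\to \widetilde{K_r^\circ}$ with $\widetilde{K_r^\circ}$ compact exhibits $\widetilde{M^\circ}$ as quasi-isometric to a single fibre, which is locally homogeneous and complete, hence a homogeneous space $G/H$ — the base being compact contributes nothing to the large-scale geometry. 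Therefore $\Gamma \sim_{\mathrm{QI}} M \sim_{\mathrm{QI}} U \sim_{\mathrm{QI}} \widetilde{M^\circ} \sim_{\mathrm{QI}} G/H$, a locally homogeneous (in fact homogeneous) space.

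The main obstacle I expect is the step asserting that passing to the open dense stratum $\widetilde{M^\circ}$, and then collapsing the compact base $\widetilde{K_r^\circ}$ of the fibration, are both quasi-isometries. The first requires controlling how removing the definable lower-dimensional subset $\pi^{-1}(M\setminus M^\circ)$ affects intrinsic distances in $U$; the cleanest route is to note that $M\setminus M^\circ$ is a compact definable subset of positive codimension, so a definable tubular-neighbourhood/Whitney-stratification argument gives a uniform bound on how far a point of $M$ is from $M^\circ$ and on the distortion of geodesics pushed off $M\setminus M^\circ$, and this uniformity lifts $\Gamma$-equivariantly to $U$. The second is softer: for a locally trivial fibration over a compact base with a definable (hence bounded-geometry) total space, the projection is a quasi-isometry onto nothing and each fibre inclusion is a quasi-isometry onto the total space — this is where compactness of $\widetilde{K_r^\circ}$, guaranteed by Theorem~\ref{thmA}, is essential. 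Finally, identifying the fibre with a homogeneous space uses that a complete, locally homogeneous manifold whose universal cover is simply connected becomes genuinely homogeneous, together with Theorem~\ref{mainB}-type reasoning applied fibrewise; since we only need the quasi-isometry type, even "locally homogeneous" suffices for the statement as phrased.
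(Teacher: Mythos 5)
Your starting point (form $M=U/\Gamma$, invoke the Console--Olmos map, and try to identify the quasi-isometry type of $\Gamma$ with that of a locally homogeneous piece) is the right one, but the chain $\Gamma \sim_{\mathrm{QI}} U \sim_{\mathrm{QI}} \widetilde{M^{\circ}} \sim_{\mathrm{QI}} \text{(fibre)}$ contains a genuine gap at its last link. The principle you rely on --- that for a locally trivial fibration over a compact (or bounded) base each fibre inclusion is a quasi-isometry onto the total space --- is false: a fibre equipped with the \emph{restriction} of the ambient metric can be exponentially distorted relative to its \emph{intrinsic} metric. The standard counterexample is a closed hyperbolic $3$-manifold fibering over $S^1$: the preimage of a fibre in $\mathbb{H}^3$ is a surface-group orbit, coarsely dense but exponentially distorted, and the surface group is \emph{not} quasi-isometric to $\mathbb{H}^3$. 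Nothing in your sketch rules out such distortion for the fibres of $\Phi$. Your first reduction, that $\widetilde{M^{\circ}}\hookrightarrow U$ is a quasi-isometry, is also unsubstantiated: $M\setminus M^{\circ}$ can have codimension one, and the tubular-neighbourhood argument you gesture at does not control the intrinsic metric of the complement. (A minor slip besides: $\Gamma$ is quasi-isometric to $U$, not to the compact $M$, by \v{S}varc--Milnor.)

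The paper's proof avoids all metric comparison between a fibre and the ambient space. It fixes a single connected component $F$ of a non-critical fibre of $W_M$ in the compact quotient $M$, and uses definability of $p^{-1}(F)$ in $U$ (it is a union of components of a fibre of $W_U$) to conclude that $p^{-1}(F)$ has finitely many components, i.e.\ that $\Gamma_F:=\operatorname{im}\bigl(\pi_1(F)\to\Gamma\bigr)$ has finite index in $\Gamma$. Then $\Gamma\sim_{\mathrm{QI}}\Gamma_F$, and \v{S}varc--Milnor is applied a second time to the free, properly discontinuous, cocompact action of $\Gamma_F$ on a component $\widetilde F$ of $p^{-1}(F)$ equipped with its \emph{own} intrinsic metric $g|_{\widetilde F}$, whose quotient is the compact $F$. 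Console--Olmos gives local homogeneity of $\widetilde F$. This is where the o-minimality hypothesis actually enters (finiteness of $\pi_0(p^{-1}(F))$), a step your argument does not isolate; if you want to salvage your approach, you should replace the distortion claims by this finite-index-subgroup argument.
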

\begin{proof}
Let $M:=U/\Gamma$ and $p \colon U \to M$ the projection. The metric $g$ descends to a Riemannian metric $\overline{g}$ on $M$. Since $U \to M$ is a $\Gamma$-Galois cover, we have a surjective homomorphism $\pi_1(M) \to \Gamma$.

Let $w_M \colon M \to K$ be the Console-Olmos map on $(M, \overline{g})$ and $F \subset M$ a connected component of a non-critical fibre of $w_M$. Denote by $\Gamma_F$ the image of the composition homomorphism $\pi_1(F) \to \pi_1(M) \to \Gamma$. The connected components of $p^{-1}(F)$ are in bijection with $\Gamma/\Gamma_F$. On the other hand, $p^{-1}(F)$ consists of several connected components of a fibre of the Console-Olmos map $w_U \colon (U, g) \to K'$, hence definable. It follows that $\pi_0(p^{-1}(F)) = \Gamma/\Gamma_F$ is finite. Therefore, $\Gamma$ is quasi-isometric to $\Gamma_F$.

Let $\widetilde{F}$ be a connected component of $p^{-1}(F)$. The group $\Gamma_F$ acts on $(\widetilde{F}, g|_{\widetilde{F}})$ by isometries and the action is free and properly discontinuous with a compact quotient $F=\widetilde{F}/\Gamma_F$. By Milnor-\v{S}varc Lemma (\cite{BH}, Proposition 8.19) this implies $\Gamma_F$ is quasi-isometric to $(\widetilde{F}, g|_{\widetilde{F}})$. By the Console-Olmos Theorem (Theorem \ref{co theorem}) $(\widetilde{F}, g|_{\widetilde{F}})$ is locally homogeneous.
\end{proof}

Notice that being quasi-isometric to a locally homogeneous space seems to be a subtle property of a finitely generated group. For instance a group $\Gamma$ as in Corollary \ref{quasi-isometric} can not be of intermediate growth. 

\subsection{The case of aspherical $M$}\label{proof b part} Now we want to prove Theorem \ref{mainB}, which strengthens Theorem \ref{mainA} in the case of aspherical $M$. The main idea behind the proof is that if $M$ is aspherical, it can have no strict closed submanifolds which are strongly bi-definable. On the other hand, the smooth fibres of the Console-Olmos map should provide such submanifolds, which forces the Console-Olmos map to be constant.

\begin{prop}\label{kill the action}
Let $M$ be a finite-dimensional compact connected CW-complex and $N \subseteq M$ a closed connected subcomplex. Let $\pi \colon \tM \to M$ be the universal cover and $\widetilde{N}:=\pi^{-1}(N)$ the preimage of $N$ in it. Assume that the cohomology algebra $H^{\bullet}(\widetilde{N}, \Z/2\Z)$ is finitely generated. Then there exists a finite connected cover $q \colon M' \to M$ and a connected subcomplex $N' \subseteq M', \ q(N') = N$, such that the action of $\pi_1(N')$ on $H^{\bullet}(\widetilde{N}, \Z/2\Z)$ is trivial.
\end{prop}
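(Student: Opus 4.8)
The plan is to use the action of the fundamental group on the (finitely generated) cohomology $H^{\bullet}(\widetilde N, \Z/2\Z)$ and to pass to a subgroup of finite index that acts trivially. First I would fix the connected subcomplex $N \subseteq M$ and its preimage $\widetilde N = \pi^{-1}(N)$. The deck group $\pi_1(M)$ acts on $\widetilde N$ by homeomorphisms, hence on $H^{\bullet}(\widetilde N, \Z/2\Z)$; restricting along $i_* \colon \pi_1(N) \to \pi_1(M)$ we get an action of $\pi_1(N)$ on $H^{\bullet}(\widetilde N, \Z/2\Z)$. The key observation is that each graded piece $H^j(\widetilde N, \Z/2\Z)$ is a \emph{finite} $\Z/2\Z$-vector space: indeed $H^{\bullet}(\widetilde N, \Z/2\Z)$ is finitely generated as an algebra over the field $\Z/2\Z$, so it is finite-dimensional in each degree, and by Theorem \ref{o-min nice}~\textit{(ii)} (or the hypothesis directly) there are only finitely many nonzero degrees — so the full algebra is a finite set. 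Therefore the image of $\pi_1(N)$ in $\operatorname{Aut}\big(H^{\bullet}(\widetilde N, \Z/2\Z)\big)$ is a finite group, and the kernel $H$ of the action homomorphism $\rho \colon \pi_1(N) \to \operatorname{Aut}\big(H^{\bullet}(\widetilde N, \Z/2\Z)\big)$ is a finite-index normal subgroup of $\pi_1(N)$.

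Next I would build the cover. Let $G := i_*(\pi_1(N)) \subseteq \pi_1(M)$ and let $G_0 := i_*(H) \subseteq G$, a finite-index subgroup of $G$ (possibly not normal in $\pi_1(M)$). To produce a \emph{finite} cover of $M$ I pass to a finite-index subgroup of $\pi_1(M)$: take $\Lambda \subseteq \pi_1(M)$ a finite-index subgroup with $\Lambda \cap G \subseteq G_0$ — for instance one can intersect $\pi_1(M)$-conjugates, or simply take the core of $G_0$ inside $\pi_1(M)$ if $G_0$ has finite index in $\pi_1(M)$; in the general case it is cleaner to let $\Lambda$ be the preimage in $\pi_1(M)$ of a finite-index subgroup of $\pi_1(M)/\!\!\sim$ separating the relevant cosets, but the essential point is only that $\Lambda$ has finite index and $\Lambda \cap i_*(\pi_1(N)) \subseteq i_*(H)$. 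Let $q \colon M' \to M$ be the connected cover corresponding to $\Lambda$; it is finite since $[\pi_1(M):\Lambda]<\infty$. Then $M'$ is again a finite connected CW-complex (the cell structure lifts) and $q^{-1}(N)$ is a subcomplex of $M'$; let $N'$ be a connected component of $q^{-1}(N)$, so $q(N') = N$ because $N$ is connected.

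Finally I would check that $\pi_1(N')$ acts trivially on $H^{\bullet}(\widetilde N, \Z/2\Z)$. The universal cover of $M'$ is again $\widetilde M$ (a cover of a cover), and $\pi^{-1}(N')$ is the union of the components of $\widetilde N$ lying over $N'$; but $\widetilde N \to N$ factors as $\widetilde N \to N' \to N$ only after choosing components, so I instead argue directly with subgroups. Under $\pi_1(M') = \Lambda \hookrightarrow \pi_1(M)$, the inclusion $N' \hookrightarrow M'$ induces $i'_* \colon \pi_1(N') \to \Lambda$ whose image is $\Lambda \cap i_*(\pi_1(N))$ up to conjugacy — this is the standard description of $\pi_1$ of a component of the preimage of a subcomplex under a cover. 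By construction $\Lambda \cap i_*(\pi_1(N)) \subseteq i_*(H) = i_*(\ker \rho)$, and $\ker\rho$ is precisely the subgroup of $\pi_1(N)$ acting trivially on $H^{\bullet}(\widetilde N, \Z/2\Z)$. Hence $\pi_1(N')$ acts trivially on $H^{\bullet}(\widetilde N, \Z/2\Z)$, as required.

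The main obstacle is the group-theoretic bookkeeping in the second paragraph: one must produce a genuinely \emph{finite} cover of $M$ (so a finite-index subgroup of $\pi_1(M)$, not merely of $\pi_1(N)$) while controlling the intersection with $i_*(\pi_1(N))$, and then correctly identify $\pi_1$ of a connected component of $q^{-1}(N)$. This is where some care is needed, and where invoking that the relevant kernel has finite index — ultimately because the target $\operatorname{Aut}\big(H^{\bullet}(\widetilde N, \Z/2\Z)\big)$ is finite — is essential; the finiteness of $H^{\bullet}(\widetilde N, \Z/2\Z)$ coming from the o-minimal finite cell decomposition (Theorem \ref{o-min nice}) is the input that makes the whole argument go through.
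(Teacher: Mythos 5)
Your first and third paragraphs are essentially right, but the second paragraph --- the construction of a finite-index subgroup $\Lambda \le \pi_1(M)$ with $\Lambda \cap i_*(\pi_1(N)) \subseteq i_*(H)$ --- is a genuine gap as written. Starting from a finite-index subgroup $G_0 = i_*(H)$ of the subgroup $G = i_*(\pi_1(N)) \le \pi_1(M)$, there is in general \emph{no} finite-index subgroup $\Lambda \le \pi_1(M)$ with $\Lambda \cap G \subseteq G_0$: this is a separability-type condition that can fail (for instance, if $\pi_1(M)$ has no proper finite-index subgroups, the only candidate is $\Lambda = \pi_1(M)$, and then $\Lambda \cap G = G$). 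Neither of your suggested devices works in general: the core of $G_0$ in $\pi_1(M)$ has finite index only when $G_0$ itself has finite index in $\pi_1(M)$ (already false when $N$ is a point), and the ``preimage of a finite-index subgroup separating the relevant cosets'' is not an actual construction.

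The repair is exactly the observation you make in your first paragraph and then abandon: the $\pi_1(N)$-action on $H^{\bullet}(\widetilde{N}, \Z/2\Z)$ is the restriction along $i_*$ of an action of the full deck group $\pi_1(M)$ on $\widetilde{N} = \pi^{-1}(N)$. Hence $\rho$ factors as $\pi_1(N) \xrightarrow{i_*} \pi_1(M) \xrightarrow{\bar{\rho}} \operatorname{Aut}\bigl(H^{\bullet}(\widetilde{N}, \Z/2\Z)\bigr)$, and you may simply take $\Lambda := \ker \bar{\rho}$, which is normal and of finite index in $\pi_1(M)$ because the target is finite. Every element of $\Lambda$ then acts trivially on $H^{\bullet}(\widetilde{N}, \Z/2\Z)$ by definition, so there is nothing left to verify about $\pi_1(N')$, and the normality of $\Lambda$ also disposes of the ``up to conjugacy'' caveat in your identification of the image of $\pi_1(N')$. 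This is precisely what the paper does: it passes to the Galois cover of $M$ associated with the homomorphism $\pi_1(M) \to \operatorname{Aut}\bigl(H^{\bullet}(\widetilde{N}, \Z/2\Z)\bigr)$, rather than trying to promote a finite-index subgroup of $\pi_1(N)$ to one of $\pi_1(M)$. With that substitution your argument closes up and coincides with the paper's.
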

\begin{proof}
Note that we do not assume $\widetilde{N}$ to be connected. Nevertheless, $\widetilde{N}$ has only finitely many connected components since its cohomology algebra is finitely generated, and $$\pi_0(\widetilde{N})=\pi_1(M)/i_*\pi_1(N)$$ is finite (here $i\colon N \hookrightarrow M$ is the tautological embedding).

Let $\Gamma:=\pi_1(M)$. Since $\widetilde{N}$ is a $\Gamma$-Galois cover of $N$, the action of $\pi_1(N)$ on $H^{\bullet}(\widetilde{N}, \Z/2\Z)$ factors through the natural homomorphism $\pi_1(N) \to \pi_1(M)=\Gamma$. At the same time, if $H^{\bullet}(\widetilde{N}, \Z/2\Z)$ is finitely generated, it is a finite-dimensional $\Z/2\Z$-algebra and the group $\operatorname{Aut}(H^{\bullet}(\widetilde{N}, \Z/2\Z)$ is finite. Let $G$ be the image of $\pi_1(N)$ in $\operatorname{Aut}(H^{\bullet}(\widetilde{N}, \Z/2\Z))$. Consider the composition of homomorphisms of groups 
$$
\pi_1(N) \to \Gamma \to G \hookrightarrow \operatorname{Aut}(H^{\bullet}(\widetilde{N}, \Z/2\Z)).
$$ Let $q \colon M' \to M$ be the finite $G$-Galois cover of $M$ associated to the group homomorphism $\pi_1(M)=\Gamma \to G \subseteq \operatorname{Aut}(H^{\bullet}(\widetilde{N}, \Z/2\Z))$. Let $N':=q^{-1}(N)$. Notice that $N'$ is connected since both $\pi_1(N)$ and $\pi_1(M)$ map to $G$ surjectively. Of course the universal cover $\tM \xrightarrow{\pi} M$ factors through $M' \xrightarrow{q} M$ and $\widetilde{N} \to N$ factors through $N' \to N$. At the same time, the action of $\pi_1(N')$ on $H^{\bullet}(\widetilde{N}, \Z/2\Z)$ is trivial.
\end{proof}

\begin{lemma}\label{podkorytov}
Let $\Gamma$ be a finitely generated group and $M$ a compact manifold which is a $K(\Gamma, 1)$. Let $N \subseteq M$ be a closed submanifold. Let $\pi \colon \tM \to M$ be the universal cover and $\widetilde{N}:=\pi^{-1}(N)$. Assume that  $H^{\bullet}(\widetilde{N}, \Z/2\Z)$ is finitely generated. Then $N=M$.
\end{lemma}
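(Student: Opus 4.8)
The plan is to argue by contradiction: assuming $N \subsetneq M$, I derive a contradiction by playing the cohomological dimension of a finite-index subgroup of $\pi_1(M)$ (which equals $\dim M$) against $\dim N < \dim M$, through the Cartan--Leray spectral sequence of a suitable covering of $N$. First one reduces to $N$ connected: for a general $N$ and any connected component $N_1$, the preimage $\pi^{-1}(N_1)$ is a union of some of the finitely many (by Proposition \ref{bi-def top}) connected components of $\widetilde{N}$, so $H^{\bullet}(\pi^{-1}(N_1),\Z/2\Z)$ is a direct summand of $H^{\bullet}(\widetilde{N},\Z/2\Z)$, hence finitely generated, and the connected case applied to $N_1$ would give $N_1 = M$, whence $N = M$. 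Assume then $N$ connected, write $n = \dim M$ and $d = \dim N < n$ (since $M$ is connected and $N\ne M$), and fix a connected component $\widetilde{N}_0$ of $\widetilde{N}$. Then $\widetilde{N}_0$ is a connected $d$-manifold, and $\pi|_{\widetilde{N}_0}\colon \widetilde{N}_0 \to N$ is a regular covering with deck group $H := \operatorname{im}(i_*\colon \pi_1(N)\to\pi_1(M))$, which has finite index in $\Gamma := \pi_1(M)$ by Proposition \ref{bi-def top}.

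Next I kill the monodromy. Proposition \ref{kill the action} supplies a finite connected cover $q\colon M'\to M$ with $N' := q^{-1}(N)$ connected and $\pi_1(N')$ acting trivially on $H^{\bullet}(\widetilde{N},\Z/2\Z)$; here $M'$ is again a compact aspherical $n$-manifold with the same universal cover $\widetilde{M}$, one has $\pi^{-1}(N') = \widetilde{N}$, and $N'\ne M'$. Replacing $(M,N)$ by $(M',N')$, all hypotheses (and the inequality) persist and in addition the new deck group $H$ acts trivially on $H^{\bullet}(\widetilde{N}_0,\Z/2\Z)$. Since $H$ still has finite index in $\pi_1(M)$, it is the fundamental group of a finite cover $M''\to M$, again a compact aspherical $n$-manifold, so $H^{\bullet}(H,\Z/2\Z)\cong H^{\bullet}(M'',\Z/2\Z)$; in particular $H^{n}(H,\Z/2\Z)\ne 0$ by Poincar\'e duality over $\Z/2\Z$, and $H^{p}(H,\Z/2\Z)=0$ for $p>n$.

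The heart of the proof is the Cartan--Leray spectral sequence of the regular covering $\widetilde{N}_0\to N$ with deck group $H$, which by triviality of the action reads
\[
E_2^{p,q} = H^{p}(H,\Z/2\Z)\otimes_{\Z/2\Z}H^{q}(\widetilde{N}_0,\Z/2\Z)\;\Longrightarrow\;H^{p+q}(N,\Z/2\Z).
\]
I claim, by induction on $k=0,1,\dots,n-1$, that $H^{d-k}(\widetilde{N}_0,\Z/2\Z)=0$. In the step at level $k$, consider total degree $n+d-k$, which exceeds $d$ because $k\le n-1$; hence the abutment $H^{n+d-k}(N,\Z/2\Z)$ vanishes, $N$ being a $d$-manifold. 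Among the $E_2^{p,q}$ with $p+q=n+d-k$, those with $p>n$ vanish since $H^{p}(H,\Z/2\Z)=0$, those with $q>d$ vanish since $\widetilde{N}_0$ is a $d$-manifold, and those with $d-k<q\le d$ vanish by the inductive hypothesis; so the only one that can be non-zero is $E_2^{n,d-k}=H^{n}(H,\Z/2\Z)\otimes H^{d-k}(\widetilde{N}_0,\Z/2\Z)$. Every differential out of position $(n,d-k)$ lands in some $E_r^{n+r,*}$ with $n+r>n$, which is zero; every differential into $(n,d-k)$ issues from some $E_r^{n-r,\,d-k+r-1}$ with $d-k+r-1\ge d-k+1$, which is again zero (by the inductive hypothesis when $d-k+r-1\le d$, and because $\widetilde{N}_0$ is a $d$-manifold otherwise). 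Therefore $E_\infty^{n,d-k}=E_2^{n,d-k}$ is a subquotient of $H^{n+d-k}(N,\Z/2\Z)=0$, and since $H^{n}(H,\Z/2\Z)\ne0$ we conclude $H^{d-k}(\widetilde{N}_0,\Z/2\Z)=0$. Taking $k=d$ (legitimate since $d\le n-1$) gives $H^{0}(\widetilde{N}_0,\Z/2\Z)=0$, contradicting the non-emptiness and connectedness of $\widetilde{N}_0$. Hence $N=M$.

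I expect the main obstacle to be the bookkeeping around Proposition \ref{kill the action}: one must check that passing to the finite cover leaves $\widetilde{N}$ (hence all the cohomology in play) unchanged and keeps $[\Gamma:H]$ finite, and that ``finitely generated'' for $H^{\bullet}(\widetilde{N},\Z/2\Z)$ genuinely forces finite-dimensionality, which is what makes its automorphism group finite in Proposition \ref{kill the action}. Once the degree bounds $H^{p}(H,\Z/2\Z)=0$ for $p>n$, $H^{n}(H,\Z/2\Z)\ne0$, and $H^{q}(\widetilde{N}_0,\Z/2\Z)=0$ for $q>d$ are established, the spectral-sequence induction itself is entirely formal.
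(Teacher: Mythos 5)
Your proof is correct and follows essentially the same route as the paper: reduce via Proposition \ref{kill the action} to trivial monodromy, then play the $p=n$ column of the Cartan--Leray spectral sequence (non-vanishing of the top mod-$2$ cohomology of the fundamental group of a compact aspherical $n$-manifold, vanishing above $n$) against the vanishing of $H^{>\dim N}(N,\Z/2\Z)$. The only cosmetic differences are that the paper runs the spectral sequence for the full disconnected preimage $\widetilde{N}$ with the whole group $\Gamma$ and extracts the single statement $H^{n+r}(N,\Z/2\Z)\neq 0$ for $r$ the top non-vanishing degree of $H^{\bullet}(\widetilde{N},\Z/2\Z)$, whereas you pass to one component and the finite-index subgroup $H$ and kill the cohomology degree by degree down to $H^{0}$ --- both arguments are formally equivalent and land at the same conclusion.
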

\begin{proof}
First of all, notice that we can replace $M$ and $N$ with $M'$ and $N'$ as in Proposition \ref{kill the action} so that the action of $\pi_1(N)$ on $H^{\bullet}(\widetilde{N}, \Z/2\Z)$ is trivial. Indeed, $M'$ will be still aspherical, will have the same universal cover, and $N'=M'$ if and only if $N=M$. Therefore, starting from now we assume that $\pi_1(N)$ acts trivially on the cohomology algebra of $\widetilde{N}$.

Since $\widetilde{N}$ is a $\Gamma$-Galois cover of $N$, the $\Z/2\Z$-cohomology groups of $N$ can be computed using the Cartan-Leray  spectral sequence (\cite{McCl}, Theorem $8^{bis}.9$):
\[
E_2^{i,j}=H^i(\Gamma, H^j(\widetilde{N}, \Z/2\Z)) \implies H^{i+j}(N, \Z/2\Z).
\]
Since we assume that the $H^{j}(\widetilde{N}, \Z/2\Z)$'s are trivial $\pi_1(N)$-modules and the action of $\pi_1(N)$ on $\widetilde{N}$ factors through $\Gamma$, each $H^j(\widetilde{N}, \Z/2\Z)$ is  a finite-dimensional $\Z/2\Z$-vector space with trivial $\Gamma$-action and $H^i(\Gamma, H^j(\widetilde{N}, \Z/2\Z))=H^i(\Gamma, \Z/2\Z) \o_{\Z/2\Z}H^j(\widetilde{N}, \Z/2\Z)$. Let $n:=\dim M$. The compact manifold $M$ is a $K(\Gamma, 1)$ space, thus $H^n(\Gamma, \Z/2\Z)=\Z/2\Z$ and $H^{n+i}(\Gamma, \Z/2\Z)=0$ for $i>0$. 

Let $r$ be maximal integer such that $H^r(\widetilde{N}, \Z/2\Z)$ is non-zero. Consider $$E_2^{n,r}=H^n(\Gamma, H^r(\widetilde{N}, \Z/2\Z).$$ It is non-zero, since it equals
\[
H^n(\Gamma, \Z/2\Z) \o H^r(\widetilde{N}, \Z/2\Z)=H^r(\widetilde{N}, \Z/2\Z).
\]
At the same time  $$E_2^{n-2,r+1}=H^{n-2}(\Gamma, H^{r+1}(\widetilde{N}, \Z/2\Z))=0$$ and $$E_2^{n+2, r-1}=H^{n+2}(\Gamma, H^{r-1}(\widetilde{N}, \Z/2\Z)=0.$$ Therefore $E_2^{n,r}$ survives in the spectral sequence and $H^{n+r}(N, \Z/2\Z)$ is non-zero. We assumed that $N$ is a closed manifold, so $H^p(N, \Z/2\Z)$ vanishes for $p>\dim N$. It follows that $n+r \le \dim N$. Of course $\dim N \le \dim M=n$ and it follows that $n+r=\dim N$, that is, $\dim N=\dim M$ and $r=0$. Since $M$ was assumed to be connected and $N$ is a closed submanifold, we deduce that $N=M$.
\end{proof}

\begin{cor}\label{no bi-def}
Let $M$ be a compact aspherical real analytic manifold and $\pi \colon \widetilde{M} \to M$ its universal cover. Assume that $\widetilde{M}$ admits a structure of a manifold definable in an o-minimal structure. Then $M$ has no strongly bi-definable strict submanifolds.
\end{cor}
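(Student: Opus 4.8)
The idea is that this is essentially an immediate consequence of Lemma \ref{podkorytov}, once one feeds into it the topological input on $\widetilde{N}$ supplied by Proposition \ref{bi-def top}. So let $N \subseteq M$ be a strongly bi-definable submanifold and set $\widetilde{N} := \pi^{-1}(N)$; the goal is to show $N = M$, which is exactly the assertion that no \emph{strict} such submanifold exists.

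First one arranges that $N$ is a closed submanifold, as required by Lemma \ref{podkorytov}; this is automatic under the usual convention that a submanifold of a compact manifold means a closed embedded submanifold (if one wished to allow non-closed $N$ one would have to be a little more careful, passing to closures — which commute with $\pi^{-1}$ since $\pi$ is a covering map, hence a local homeomorphism, so that strong bi-definability is preserved — but the statement is of interest precisely for closed submanifolds, and a full-dimensional component of a closed submanifold of the connected manifold $M$ is already all of $M$). Now, since $M$ is compact real analytic it is homotopy equivalent to a finite CW-complex, so $\Gamma := \pi_1(M)$ is finitely generated, and the hypothesis that $M$ is aspherical says exactly that $M$ is a $K(\Gamma,1)$. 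Next, Proposition \ref{bi-def top}(1), applied with the ring $R = \Z/2\Z$, shows that $H^{\bullet}(\widetilde{N}, \Z/2\Z)$ is a finitely generated algebra. These are precisely the hypotheses of Lemma \ref{podkorytov}, which therefore applies verbatim and gives $N = M$.

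The proof thus contains no new content beyond combining the two earlier results; the only point deserving attention — hence the "main obstacle" — is the bookkeeping around which notion of submanifold is in play, i.e. making sure the set to which Lemma \ref{podkorytov} is applied is genuinely a closed submanifold of $M$ with definable preimage in $\widetilde{M}$. Since strong bi-definability is stable under finite unions, complements and topological closure, and $\widetilde{N}$ has only finitely many connected components by Proposition \ref{bi-def top}, this causes no real difficulty.
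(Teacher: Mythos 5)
Your proof is correct and follows essentially the same route as the paper's: both reduce the statement to Lemma \ref{podkorytov} by noting that strong bi-definability of $N$ makes $H^{\bullet}(\widetilde{N},\Z/2\Z)$ finitely generated (you cite Proposition \ref{bi-def top}(1), the paper cites the finite cell decomposition directly, but these are the same fact). Your extra care about the closedness convention for submanifolds is a reasonable addition but does not change the argument.
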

\begin{proof}
If $N \subseteq M$ is strongly be-definable, $\pi^{-1}(N)=:\widetilde{N} \subseteq \widetilde{M}$ is definable. In particular, $H^{\bullet}(\widetilde{N}, \Z/2\Z)$ is finitely generated by Theorem \ref{o-min nice}, \textit{(ii)}. The Corollary then follows from Lemma \ref{podkorytov}.
\end{proof}

\begin{thrm}\label{homogene}
Let $(M,g)$ be a compact analytic Riemannian manifold and $\pi \colon \widetilde{M} \to M$ its universal cover. Assume that $\widetilde{M}$ admits a structure of a manifold definable in an o-minimal structure and the metric $\widetilde{g}:= \pi^*g$ is definable. Assume further that $\widetilde{M}$ is contractible, that is, $M$ is aspherical. Then $(\widetilde{M}, \widetilde{g})$ is a Riemannian homogenous space.
\end{thrm}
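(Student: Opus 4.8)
The plan is to combine Theorem \ref{thmA} with the topological rigidity statement in Corollary \ref{no bi-def} (equivalently, Lemma \ref{podkorytov}). First I would apply Theorem \ref{thmA} to $(M,g)$ to obtain the compact simply connected definable space $\widetilde{K}$ and the definable smooth map $\Phi \colon \widetilde{M} \to \widetilde{K}$ lifting the Console-Olmos map $W_M \colon M \to K$. Recall from the proof of Theorem \ref{thmA} that $K = W_M(M)$ is compact and $\pi_1(K)$ is finite, so $\widetilde{K} \to K$ is a finite cover; the whole point is that $W_M$ \emph{descends} to $M$, so its fibres are closed subsets of the compact manifold $M$, and a generic fibre is a genuine $\mathcal{C}^2$-submanifold by the cell decomposition applied to $W_M$ (Theorem \ref{o-min nice}(iii), Remark \ref{smooth cell}).

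Next I would argue that $W_M$ must be constant. Suppose not; then $\dim K \ge 1$. Pick a non-critical value $y \in K^{\circ}$ and let $N$ be a connected component of $W_M^{-1}(y)$. Since $y$ is non-critical, $N$ is a closed $\mathcal{C}^2$-submanifold of $M$ which is strict, i.e. $\dim N = \dim M - \dim_y K < \dim M$ (here one uses that $W_M$ is bi-definable, exactly as established inside the proof of Theorem \ref{thmA}, so that $\pi^{-1}(N)$ is definable in $\widetilde{M}$; more precisely $\pi^{-1}\big(W_M^{-1}(y)\big) = W_{\widetilde{M}}^{-1}(y) = \Phi^{-1}(p^{-1}(y))$ is definable, and $\pi^{-1}(N)$ is a union of connected components of it, hence also definable by Theorem \ref{o-min nice}(ii)). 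Thus $N$ is a strongly bi-definable strict submanifold of the compact aspherical real analytic manifold $M$. But Corollary \ref{no bi-def} forbids exactly this. The contradiction shows $W_M$ is constant, hence $K$ is a point, hence $\widetilde{K}$ is a point.

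Now $W_{\widetilde{M}} = \pi^* W_M$ is constant on $\widetilde{M}$, i.e. all the additive Weyl invariants $w_1, \dots, w_k$ of $(\widetilde{M}, \widetilde{g})$ are constant. By the Pr\"ufer–Tricerri–Vanhecke theorem (Theorem \ref{PTV thrm}) this means $(\widetilde{M}, \widetilde{g})$ is locally homogeneous. Finally, $(\widetilde{M}, \widetilde{g})$ is complete: it is a Riemannian cover of the compact manifold $(M,g)$, which is complete, and completeness lifts to covers. By hypothesis $\widetilde{M}$ is simply connected (being the universal cover, and it is even contractible). A complete, simply connected, locally homogeneous Riemannian manifold is homogeneous — this is the standard fact quoted right after Theorem \ref{PTV thrm} (and in the paragraph recalling Singer's theorem in subsection \ref{Singer part}). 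Therefore $(\widetilde{M}, \widetilde{g})$ is a Riemannian homogeneous space, as claimed.

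I expect the only delicate point to be the careful bookkeeping in the contradiction step: one must make sure the object one feeds into Corollary \ref{no bi-def} is genuinely a \emph{submanifold} (not merely a definable subset) and genuinely \emph{strict}. This is why one restricts to a non-critical value of $W_M$, where the implicit function theorem guarantees the fibre is a $\mathcal{C}^2$-submanifold of dimension strictly less than $\dim M$; and one must invoke, as in the proof of Theorem \ref{thmA}, that $W_M$ is bi-definable with respect to $\pi \colon \widetilde{M} \to M$ so that the preimage is definable upstairs. Everything else is a direct concatenation of results already proved: Theorem \ref{thmA}, Corollary \ref{no bi-def}/Lemma \ref{podkorytov}, Theorem \ref{PTV thrm}, and the completeness argument. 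No genuinely new idea is needed beyond assembling these pieces.
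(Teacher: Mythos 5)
Your proof is correct and follows essentially the same route as the paper: take a connected component $N$ of a non-critical level set of the Console--Olmos map $W_M$, observe that it is a strict, closed, strongly bi-definable submanifold (since $\pi^{-1}(N)$ consists of finitely many components of a level set of $W_{\widetilde M}$), derive a contradiction with Corollary \ref{no bi-def}, conclude $W_M$ is constant, and finish with the Pr\"ufer--Tricerri--Vanhecke theorem plus completeness and simple connectedness of $\widetilde M$. The extra bookkeeping you supply (smoothness and strictness of the fibre, density of non-critical values) only makes explicit what the paper leaves implicit.
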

\begin{proof}
Let $W_{M} \colon M \to \R^k$ be the Console-Olmos map. Let $N \subseteq M$ be a connected component of a non-critical level set of $W_M$. Then $N$ is a smooth submanifold of $M$ and in the same time, it is strongly bi-definable: $\pi^{-1}(N)$ is a union of several connected components of a level set of the Console-Olmos map $W_{\widetilde{M}}$ of $(\widetilde{M}, \widetilde{g})$. However, by Corollary \ref{no bi-def} this implies $N=M$ and the map $W_M$ is constant. Since $(\widetilde{M}, \widetilde{g})$ is complete and simply connected, it is homogeneous by Pr\"ufer-Tricerri-Vanhecke theorem (Theorem \ref{PTV thrm}).
\end{proof}

\begin{cor}
Let $(M, g)$ be a compact Riemannian manifold with a non-positive sectional curvature. Let $\pi \colon \widetilde{M} \to M$ be the universal cover and $\widetilde{g}:=\pi^*g$. Assume that $\widetilde{M}$ admits a structure of a $\Sigma$-definable manifold for some o-minimal structure $\Sigma$ such that $\widetilde{g}$ is $\Sigma$-definable. Then  $(M, g)$ is locally homogeneous. 
\end{cor}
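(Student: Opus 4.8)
The plan is to derive this from Theorem \ref{homogene}. The key observation is that a compact Riemannian manifold of non-positive sectional curvature is automatically aspherical, so the contractibility hypothesis of Theorem \ref{homogene} comes for free.

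First I would check that $(\widetilde M, \widetilde g)$ is complete, simply connected, and of non-positive sectional curvature: completeness because $M$ is compact, hence complete, and a Riemannian covering of a complete manifold is complete; simple connectedness because $\widetilde M$ is the universal cover; and non-positivity of the sectional curvature because $\pi$ is a local isometry and sectional curvature is a pointwise invariant. Then the Cartan--Hadamard theorem applies: for any basepoint $\widetilde x \in \widetilde M$ the exponential map $\exp_{\widetilde x} \colon T_{\widetilde x}\widetilde M \to \widetilde M$ is a diffeomorphism, so $\widetilde M$ is diffeomorphic to $\R^n$, in particular contractible, and $M$ is a $K(\pi_1(M),1)$.

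Next I would invoke Theorem \ref{homogene}: the data $M, g, \widetilde M, \widetilde g$ now satisfy all of its hypotheses ($\widetilde M$ is $\Sigma$-definable, $\widetilde g = \pi^*g$ is $\Sigma$-definable, $\widetilde M$ is contractible), so $(\widetilde M, \widetilde g)$ is a Riemannian homogeneous space — equivalently, the Console--Olmos map $W_M$ of $(M,g)$ is constant. Here I would flag the one mismatch with the statement of Theorem \ref{homogene}, which assumes $(M,g)$ real analytic whereas the corollary only assumes it smooth; I expect this to be the only point needing a remark, and it is harmless: the proof of Theorem \ref{homogene} uses analyticity nowhere. It only runs the Console--Olmos construction of $(M,g)$ — valid for $\mathcal C^m$ metrics with $m \gg n(n-1)/2$ by Remark \ref{co singular} and the argument of Theorem \ref{thmA} — and then Corollary \ref{no bi-def}, which is a purely topological consequence of Lemma \ref{podkorytov}.

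Finally I would conclude that $(M,g)$ itself is locally homogeneous, using that local homogeneity is a local metric property preserved under local isometries: given $x,y \in M$, choose lifts $\widetilde x, \widetilde y \in \widetilde M$, an isometry $\varphi$ of $(\widetilde M,\widetilde g)$ with $\varphi(\widetilde x)=\widetilde y$, and a neighbourhood $U$ of $\widetilde x$ small enough that $\pi|_U$ and $\pi|_{\varphi(U)}$ are isometric embeddings; then $\pi|_{\varphi(U)} \circ \varphi \circ (\pi|_U)^{-1}$ is an isometry between neighbourhoods of $x$ and $y$ in $M$. The argument is short, and its only non-formal inputs are Cartan--Hadamard and Theorem \ref{homogene}; I do not anticipate a genuine obstacle beyond the routine verification that the hypotheses of Theorem \ref{homogene} are met.
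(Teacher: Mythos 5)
Your proposal is correct and follows exactly the paper's route: Cartan--Hadamard gives contractibility of $\widetilde M$, and then Theorem \ref{homogene} applies. The extra remarks you add (the analyticity mismatch and the descent of homogeneity of $(\widetilde M,\widetilde g)$ to local homogeneity of $(M,g)$) are sensible elaborations of points the paper leaves implicit, but they do not change the argument.
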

\begin{proof}
The universal cover $\widetilde{M}$ is contractible by Cartan-Hadamard Theorem (\cite{BH}, Ch. 4), so the statement follows from Theorem \ref{homogene}.
\end{proof}

\subsection{Applications to the classical Koll\'ar-Pardon problem}\label{kp part}

Using Theorem \ref{homogene} we prove the original Koll\'ar-Pardon Conjecture in the special case where $X$ is aspherical and possesses a bi-definable K\"ahler metric.

\begin{cor}\label{KP special}
Let $X$ be a smooth algebraic variety and $\pi \colon \widetilde{X} \to X$ be its universal cover. Assume that $\widetilde{X}$ admits a structure of a definable complex manifold in some o-minimal structure. Assume moreover, that  $X$ is aspherical and admits a \emph{bi-definable K\"ahler metric}, that is, a K\"ahler Hermitian metric $h=g + \sqrt{-1}\omega$ such that $\pi^*h$ is definable on $\widetilde{X}$. Then $(\widetilde{X}, \pi^*h)$ is a K\"ahler homogeneous manifold. In particular,  Conjecture \ref{KP conj} holds for $X$.
\end{cor}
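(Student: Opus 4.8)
\textbf{Proof plan for Corollary \ref{KP special}.}

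The plan is to reduce the statement to Theorem \ref{homogene} applied to the real Riemannian manifold underlying $(X, g)$, and then to upgrade the resulting Riemannian homogeneity to K\"ahler homogeneity. First I would observe that the real part $g$ of the K\"ahler Hermitian metric $h$ is a real-analytic Riemannian metric on the underlying real-analytic manifold $M$ of $X^{\an}$: indeed $X$ is smooth projective (or smooth algebraic with a K\"ahler metric), so $X^{\an}$ carries a canonical real-analytic structure, and a K\"ahler metric is real-analytic in suitable holomorphic coordinates. The universal cover $\widetilde{M}$ of $M$ is, as a real manifold, nothing but the underlying real manifold of the definable complex manifold $\widetilde{X}$; by forgetting the complex structure in the definable charts we equip $\widetilde{M}$ with a structure of a definable real manifold in the same o-minimal structure $\Sigma$. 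The hypothesis that $\pi^*h$ is $\Sigma$-definable on $\widetilde{X}$ means in particular that its real part $\pi^*g = \widetilde{g}$ is a $\Sigma$-definable Riemannian metric on $\widetilde{M}$ (the components of $h$ in holomorphic definable coordinates are definable, hence so are the components of $g$ in the associated real definable coordinates). Since $X$ is aspherical, $\widetilde{M} = \widetilde{X}$ is contractible, so all the hypotheses of Theorem \ref{homogene} are verified.

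By Theorem \ref{homogene}, $(\widetilde{M}, \widetilde{g})$ is a Riemannian homogeneous manifold: there is a connected Lie group $G$ acting transitively on $\widetilde{M}$ by isometries of $\widetilde{g}$. The second, and main, step is to promote this to a holomorphic action, i.e.\ to show $(\widetilde{X}, \pi^*h)$ is K\"ahler homogeneous. The standard fact here is that on a K\"ahler manifold the full isometry group $\operatorname{Isom}(\widetilde{M}, \widetilde{g})$ need not preserve the complex structure, but its identity component does act holomorphically once one knows the metric is K\"ahler and one passes, if necessary, to the subgroup of holomorphic isometries; more precisely, for a simply connected complete K\"ahler manifold the group of holomorphic isometries still acts transitively. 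I would invoke the structure theory of homogeneous K\"ahler manifolds: a complete simply connected homogeneous K\"ahler manifold is, by the theorem of Dorfmeister--Nakajima (and in the special flat/bounded-symmetric situation already classical), a product of a flat $\mathbb{C}^n$, a bounded homogeneous domain, and a simply connected compact homogeneous K\"ahler manifold (a flag manifold). The cleanest route for the purposes of this corollary is: (a) $\operatorname{Isom}^0(\widetilde{M}, \widetilde{g})$ is transitive; (b) since $\widetilde{g}$ is K\"ahler, the subgroup $H \subseteq \operatorname{Isom}^0$ preserving the complex structure $J$ is a closed subgroup which is still transitive — this uses that on a homogeneous K\"ahler manifold $J$ is determined by $g$ up to the stabiliser, or alternatively the de Rham decomposition together with the K\"ahler condition; (c) therefore $\widetilde{X} = H/H_x$ is a complex homogeneous manifold with $H$-invariant K\"ahler metric $\pi^*h$.

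The last step is to deduce Conjecture \ref{KP conj} for $X$. Granting that $\widetilde{X}$ is K\"ahler homogeneous, complete and simply connected, the cited classification (e.g.\ \cite{Ghys}, Th\'eor\`eme 2.5, already used in the introduction) gives $\widetilde{X} \simeq F \times \mathbb{C}^n \times D$ with $F$ simply connected projective and $D$ a bounded symmetric domain; since $X$ is aspherical the compact factor $F$ must be a point, so in fact $\widetilde{X} \simeq \mathbb{C}^n \times D$, which is the desired conclusion. The main obstacle I anticipate is step (b): ensuring that the transitive isometry group can be replaced by one acting by \emph{biholomorphisms}. One must rule out the possibility that every transitive isometry subgroup moves the complex structure around nontrivially; this is handled by the general fact that on a simply connected complete K\"ahler manifold with transitive isometry group the complex structure is itself invariant under (a transitive subgroup of) the isometry group — a consequence of the holonomy/de Rham decomposition and the rigidity of K\"ahler structures on the irreducible factors — but it requires care, and is the point where one genuinely uses the K\"ahler (not merely Riemannian) hypothesis. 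Once this is in place, the rest is bookkeeping plus the quoted structure theory.
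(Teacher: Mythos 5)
Your proposal follows the same overall architecture as the paper: pass to the real part $\widetilde{g}$ of the definable K\"ahler metric, note that asphericity makes $\widetilde{X}$ contractible, apply Theorem \ref{homogene} to get Riemannian homogeneity, and then upgrade to K\"ahler homogeneity. The difference lies entirely in how the upgrade — your step (b), which you correctly identify as the crux — is carried out. You appeal to a ``general fact'' about transitive isometry groups of simply connected complete K\"ahler manifolds, to be extracted from the de Rham/holonomy decomposition and rigidity of the irreducible factors; you leave this unproved and flag it as the point requiring care. The paper instead gives a short direct argument: the K\"ahler form $\widetilde{\omega}$ is harmonic with respect to $\widetilde{g}$, and for any Killing field $\xi$ the Lie derivative $L_{\xi}\widetilde{\omega}$ vanishes (it is exact, being $d\iota_{\xi}\widetilde{\omega}$, and harmonic, since $L_{\xi}$ commutes with the Laplacian of an isometric flow); hence every one-parameter group of isometries preserves $\widetilde{\omega}$, therefore also $J=\widetilde{g}^{-1}\widetilde{\omega}$, so the whole transitive group already acts holomorphically — no passage to a subgroup and no structure theory of homogeneous K\"ahler manifolds is needed at this stage. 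The paper's route is more elementary and avoids the genuine delicacy in yours: it is not obvious, and you do not show, that the subgroup of $\operatorname{Isom}^{0}(\widetilde{M},\widetilde{g})$ preserving the \emph{given} $J$ remains transitive (on the flat de Rham factor, for instance, one must still argue that the relevant $J$ is translation-invariant). Your final bookkeeping — invoking the Ghys/Dorfmeister--Nakajima classification and using asphericity to kill the compact factor $F$ — is a reasonable elaboration of the paper's terse ``In particular, Conjecture \ref{KP conj} holds,'' and is consistent with the discussion in the introduction.
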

\begin{proof}
Let $\widetilde{h}=\pi^*h$ and $\widetilde{h}=\widetilde{g}+\sqrt{-1}\widetilde{\omega}$. Here $\widetilde{g}$ is a Riemannian metric on $\widetilde{X}$ which is the real part of $\widetilde{h}$, and $\widetilde{\omega}$ is a symplectic form which is the imaginary part of $\widetilde{h}$. Being the real part of a definable tensor, $\widetilde{g}$ is definable. Theorem \ref{homogene} implies that $(\widetilde{X}, \widetilde{g})$ is homogeneous as a Riemannian manifold. Let $\xi$ be a Killing vector field on $\widetilde{X}$. Since the form $\widetilde{\omega}$ is harmonic with respect to $\widetilde{g}$, its Lie derivative along $\xi$ vanishes. This implies that any one-parameter subgroup of diffeomorphisms of $\widetilde{X}$ that preserves $\widetilde{g}$, preserves also $\widetilde{\omega}$. Hence it preserves the complex structure operator $J_{\widetilde{X}}$ as well. We deduce that if $G$ is a Lie group that acts by isometries and transitively on $\widetilde{X}$, its action preserves the complex structure. Therefore, $\widetilde{X}$ is homogeneous not only as Riemannian but also as a complex (and as a K\"ahler) manifold.
\end{proof}

\subsection{Open questions}\label{questions}

It seems that one can consider a Koll\'ar-Pardon-type Conjecture with regard to any geometric structure. Informally speaking, the corresponding conjecture should sound as follows. Let $M$ be a compact smooth manifold and $\pi \colon \widetilde{M} \to M$ be its universal cover. Assume that $\widetilde{M}$ is definable in some o-minimal structure and $M$ is endowed with a \emph{geometric structure} which gives rise to a definable geometric structure on $\tM$. What can be said about the geometry of $\widetilde{M}$? Under which conditions is it true that there exists a Lie group action on $\tM$ with compact Hausdorff quotient which preserves this geometric structure?

We do not make here the notion of a geometric structure precise, although there are different ways of doing this, see e.g. \cite{KN}. One can ask the following na\"ive version of the same question:

\begin{q} \label{quest1}
Let $M$ be a compact smooth manifold and $\tau$ a tensor on $M$. Let $\pi \colon \tM \to M$ be its universal cover. Assume that $\tM$ admits a definable structure such that $\pi^*\tau$ is definable. Is it true that there exists a definable simply connected space $K$ and a definable smooth map $\Phi \colon \tM \to K$, such that $\operatorname{Ker} d\Phi$ is locally generated by vector fields preserving $\tau$? 
\end{q}

\begin{q}\label{quest2}
In the assumptions of Question \ref{quest1}, is it true that if $\tM$ is contractible, there exists a Lie group $G$ acting transitively on $\tM$ and preserving $\tau$?
\end{q}

We do not expect this to hold for an arbitrary tensor $\tau$, although we are not aware of counterexamples. If $\tau$ is a Riemannian metric, the affirmative answer is given by our Theorems \ref{mainA} and \ref{mainB}.

If $\tau$ is an integrable complex structure, Koll\'ar-Pardon conjecture is a particular case of Question \ref{quest1}, but Question \ref{quest1} is much more general in this case. For example, a Hopf surface $H=(\C^2 \setminus \{0\})/\Z$ is a compact complex manifold which is not algebraic, although its universal cover is definable. Notice that the methods of \cite{KP} and \cite{CHK} are based on deep algebro-geometric and Hodge-theoretic techniques and fail in the non-K\"ahler setting.

It would be interesting to study the same question in the cases when $\tau$ is a symplectic structure or an integrable para-complex structure.

Notice that once the affirmative answer to Question \ref{quest1} is established, Corollary \ref{no bi-def} together with the arguments of the proof of Theorem \ref{homogene} imply the affirmative answer to  Question \ref{quest2}.

Another possible direction of generalisation is to consider a  manifold $M$, which is not necessarily compact, but definable in some o-minimal structure (this motivates the generality that we chose in Definition \ref{bi-def}). The first step in this direction would be to find an analogue of the Claudon-H\"oring-Koll\'ar Conjecture \cite{CHK} and to understand smooth complex quasi-projective varieties with algebraic universal cover. We are going to address this question in our subsequent work \cite{Rog}.

\end{document}